\newcommand{\from}{\colon}
\theoremstyle{definition}
\newtheorem*{proofoverview}{Proof Overview}
\newcommand{\aicvexcision}{\xspace{aic-v-ex\-ci\-sion}\xspace}
\newcommand{\arccosheaf}{\xspace{arc-co\-sheaf}\xspace}
\newcommand{\arccover}{\xspace{arc-cover}\xspace}
\newcommand{\arccovers}{\xspace{arc-covers}\xspace}
\newcommand{\archypercovering}{\xspace{arc-hy\-per\-cover\-ing}\xspace}
\newcommand{\archyperdescent}{\xspace{arc-hy\-per\-de\-scent}\xspace}
\newcommand{\archypersheaf}{\xspace{arc-hy\-per\-sheaf}\xspace}
\newcommand{\arcdescent}{\xspace{arc-de\-scent}\xspace}
\newcommand{\arcsheaf}{\xspace{arc-sheaf}\xspace}
\newcommand{\arcsheaves}{\xspace{arc-sheaves}\xspace}
\newcommand{\arctopology}{\xspace{arc-topol\-o\-gy}\xspace}
\newcommand{\htopology}{\xspace{h-topol\-o\-gy}\xspace}
\newcommand{\vcosheaf}{\xspace{v-co\-sheaf}\xspace}
\newcommand{\vcover}{\xspace{v-cover}\xspace}
\newcommand{\vhypercover}{\xspace{v-hy\-per\-cover}\xspace}
\newcommand{\vhypercovers}{\xspace{v-hy\-per\-covers}\xspace}
\newcommand{\vhyperdescent}{\xspace{v-hy\-per\-de\-scent}\xspace}
\newcommand{\vdescent}{\xspace{v-de\-scent}\xspace}
\newcommand{\vsheaf}{\xspace{v-sheaf}\xspace}
\newcommand{\vtopology}{\xspace{v-topol\-o\-gy}\xspace}
\newcommand{\prosmooth}{\xspace{pro\-smooth}\xspace}
\newcommand{\pprimecomplete}{\xspace{$p'$-com\-plete}\xspace}
\newcommand{\pclosed}{\xspace{$p$-closed}\xspace}
\newcommand{\pprimeprofinite}{\xspace{$p'$-pro\-fi\-nite}\xspace}
\newcommand{\Sigmaprimeclosed}{\xspace{$\Sigma'$-closed}\xspace}
\newcommand{\Sigmacomplete}{\xspace{$\Sigma$-com\-plete}\xspace}
\newcommand{\Sigmacompletion}{\xspace{$\Sigma$-com\-ple\-tion}\xspace}
\newcommand{\Sigmagroup}{\xspace{$\Sigma$-group}\xspace}
\newcommand{\Sigmagroups}{\xspace{$\Sigma$-groups}\xspace}
\newcommand{\Sigmafinite}{\xspace{$\Sigma$-fi\-nite}\xspace}
\newcommand{\Sigmaprofinite}{\xspace{$\Sigma$-pro\-fi\-nite}\xspace}
\newcommand{\Sigmatorsion}{\xspace{$\Sigma$-tor\-sion}\xspace}
\newcommand{\proSigma}{\xspace{pro-$\Sigma$}\xspace}
\newcommand{\Ubf}{\mathbfit{U}}
\newcommand{\longfromto}[2]{{#1}\longrightarrow{#2}}
\newcommand{\RTop}{\categ{RTop}}
\DeclareMathOperator{\EM}{EM}
\DeclareMathOperator{\Abobj}{Ab}
\DeclareMathOperator{\Grpobj}{Grp}
\DeclareMathOperator{\lisse}{lis}
\DeclareMathOperator{\inj}{inj}
\newcommand{\Deltainj}{\DDelta_{\inj}}
\newcommand{\Deltainjop}{\Deltainj^{\op}}
\DeclareMathOperator{\acc}{acc}
\newcommand{\Funlexacc}{\Fun^{\lex,\acc}}
\renewcommand{\Stab}{\mathrm{Sp}}
\newcommand{\cospecializes}{\leftsquigarrow}
\newcommand{\pprimecomp}{_{p'}^{\wedge}}
\newcommand{\piet}{\uppi^{\et}}
\newcommand{\Gk}{\Gup_{k}}
\newcommand{\BGk}{\mathrm{BG}_{k}}
\newcommand{\BG}{\mathrm{BG}}
\newcommand{\Piet}{\Pi_{\infty}^{\et}}
\newcommand{\Pietprotrun}{\Pi_{<\infty}^{\et}}
\newcommand{\Pietprofin}{\widehat{\Pi}_{\infty}^{\et}}
\DeclareMathOperator{\qcqs}{qcqs}
\newcommand{\Schqcqs}{\Sch^{\qcqs}}
\newcommand{\Schqcqsop}{\Sch^{\qcqs,\op}}
\DeclareMathOperator{\spmap}{sp}
\DeclareMathOperator{\characteristic}{char}
\newcommand{\ProCat}{\Pro(\Catinfty)}
\newcommand{\ProSpc}{\Pro(\Spc)}
\newcommand{\Spcfin}{\Space_{\uppi}}
\newcommand{\Spctrun}{\Space_{<\infty}}
\newcommand{\ProSpcfin}{\Pro(\Spcfin)}
\newcommand{\ProSpctrun}{\Pro(\Spctrun)}
\newcommand{\SpcSigma}{\Spc_{\Sigma}}
\newcommand{\ProSpcSigma}{\Pro(\SpcSigma)}
\newcommand{\Sigmacomp}{_{\Sigma}^{\wedge}}
\newcommand{\Spcpprime}{\Spc_{p'}}
\newcommand{\ProSpcpprime}{\Pro(\Spcpprime)}
\newcommand{\Gammaet}{\Gamma_{\!\et}}
\newcommand{\Gammauppersharp}{\Gamma^{\sharp}}
\DeclareMathOperator{\alg}{alg}
\newcommand{\kalg}{k^{\alg}}
\newcommand{\Xkbar}{X_{\kbar}}
\newcommand{\Xs}{X_{s}}
\newcommand{\Ykbar}{Y_{\kbar}}
\newcommand{\Loc}[2]{{#1}_{(#2)}}
\newcommand{\Locet}[2]{{#1}_{(#2),\et}}
\newcommand{\Het}{\Hup_{\et}}
\DeclareMathOperator{\Sym}{Sym}
\newcommand{\Osh}{\Ocal^{\sh}}
\newcommand{\Icomp}{^{\wedge}}
\newcommand{\Dcons}{\Dup_{\cons}}
\renewcommand{\Uhat}{U^{\wedge}}
\newcommand{\fuppersharp}{f^{\sharp}}
\title{\Large Nonabelian basechange theorems \& étale homotopy theory}
\author{\normalsize Peter J. Haine \and\normalsize Tim Holzschuh \and\normalsize Sebastian Wolf}
\date{\normalsize \today}
\begin{document}

\maketitle

\begin{abstract} 
	This paper has two main goals.
	First, we prove nonabelian refinements of basechange theorems in étale cohomology (i.e., prove analogues of the classical statements for sheaves of \textit{spaces}).
	Second, we apply these theorems to prove a number of results about the étale homotopy type.
	Specifically, we prove nonabelian refinements of the smooth basechange theorem, Huber--Gabber affine analogue of the proper basechange theorem, and Fujiwara--Gabber rigidity theorem.
	Our methods also recover Chough's nonabelian refinement of the proper basechange theorem.
	Transporting an argument of Bhatt--Mathew to the nonabelian setting, we apply nonabelian proper basechange to show that the profinite étale homotopy type satisfies \arcdescent.
	Using nonabelian smooth and proper basechange and descent, we give rather soft proofs of a number of Künneth formulas for the étale homotopy type.
\end{abstract}

\setcounter{tocdepth}{1}

\tableofcontents


\setcounter{section}{-1}

\section{Introduction}\label{sec:introduction}

This paper has two central themes.
First, we prove nonabelian refinements of essentially all basechange theorems in étale cohomology.
More precisely, basechange theorems in étale cohomology are usually proven for sheaves of sets or abelian groups; we explain how to generalize these results to sheaves valued in the \category of \textit{spaces}.

Second, we apply these nonabelian basechange theorems to give rather soft proofs of a number of results in étale homotopy theory (see \cref{subsec:application_arc-descent,subsec:Kunneth_formuals}).
Often it is technically possible to prove results in étale homotopy theory in two steps by separately proving a result for étale fundamental groups, and then using a basechange theorem for étale cohomology of abelian sheaves.
However, our perspective is that it is actually easier to prove these results directly from the nonabelian refinements of the basechange theorems.
Moreover, we are often able to remove restrictive hypotheses from statements currently available in the literature, as well as prove new results.

We start by explaining the nonabelian basechange theorems that we prove.


\subsection{Nonabelian basechange theorems in algebraic geometry}\label{subsec:nonabelian_basechange_theorems_in_algebraic_geometry}

To demonstrate our approach, let us focus on the nonabelian refinement of the smooth baschange theorem in étale cohomology \cites[Exposé XII, Corollaire 1.2]{MR50:7132}.
For the statement, recall that a morphism of schemes $ f \colon \fromto{X}{Z} $ is \textit{\prosmooth} if $ X $ can be written as the cofiltered limit of smooth $ Z $-schemes with affine transition maps.

\begin{theorem}[(nonabelian smooth basechange; \Cref{cor:nonabelian_smooth_basechange})]\label{intro_thm:nonabelian_smooth_basechange}
	Let
	\begin{equation}\label{sq:general_pullback}
		\begin{tikzcd}
			W \arrow[r, "\fbar"] \arrow[d, "\gbar"'] & Y \arrow[d, "g"] \\ 
			X \arrow[r, "f"'] & Z 
		\end{tikzcd}
	\end{equation}
	be a pullback square of qcqs schemes and assume that the morphism $ f $ is \prosmooth.
	Write $ \Sigma $ for the set of prime numbers invertible on $ Z $.
	Then for each $ \Sigma $-torsion étale sheaf of \emph{spaces} $ F $ on $ Y $ (see \Cref{def:Sigma-torsion_etale_sheaf}), the exchange transformation
	\begin{equation*}
		\fromto{\fupperstar\glowerstar(F)}{\gbarlowerstar\fbarupperstar(F)}
	\end{equation*}
	is an equivalence in the \category of étale sheaves of spaces on $ X $.
\end{theorem}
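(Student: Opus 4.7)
The plan is to reduce the nonabelian statement to the classical smooth basechange theorem for $\Sigma$-torsion abelian étale sheaves (SGA~4, Exposé~XII, Corollaire~1.2) via a Postnikov-tower argument that decomposes $F$ into Eilenberg--MacLane pieces. As a preliminary, I observe that both $\fupperstar\glowerstar$ and $\gbarlowerstar\fbarupperstar$ are left exact: the lower-star functors preserve all limits as right adjoints, while the upper-star functors preserve finite limits as inverse images of geometric morphisms. Consequently, the exchange transformation is natural in $F$ and compatible with fiber sequences in $F$. Moreover, the question is local on $X$ in the étale topology, so I may restrict to the case where $X$ is strictly local if this is useful for identifying stalks.

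Next, I work in the hypercomplete part of $Y_{\et}$ and write $F$ as the cofiltered limit of its Postnikov truncations $\tau_{\leq n} F$, whose successive fibers are Eilenberg--MacLane sheaves $K(A, n)$ with $A = \uppi_{n} F$ a $\Sigma$-torsion abelian étale sheaf on $Y$. Granting that both sides commute with this Postnikov limit, the fiber-sequence compatibility noted above reduces the problem to the case $F = K(A, n)$. For such $F$ one has $\glowerstar K(A, n) \simeq K(\mathrm{R}\glowerstar A, n)$ as hypercomplete sheaves of spaces, and analogously after basechange, so the statement reduces to showing that
\[
\fupperstar \mathrm{R}\glowerstar A \longrightarrow \mathrm{R}\gbarlowerstar \fbarupperstar A
\]
is an equivalence of sheaves of (bounded below) spectra on $X$. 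This is exactly classical abelian smooth basechange, read homotopy-group by homotopy-group. The upgrade from smooth to \prosmooth $f$ is handled by writing $f$ as a cofiltered limit of smooth morphisms with affine transition maps and invoking the fact that étale cohomology of qcqs schemes commutes with such cofiltered limits.

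I expect the principal obstacle to be justifying that $\fupperstar \glowerstar$ commutes with the Postnikov limit. The functor $\glowerstar$ preserves all limits, so the real content is whether $\fupperstar$ commutes with the cofiltered limit of the Postnikov tower of $\glowerstar F$. This step is where the $\Sigma$-torsion hypothesis is crucially used, via finite cohomological-dimension bounds (at strictly henselian local rings) that are available for $\Sigma$-torsion sheaves on qcqs schemes. A secondary technical point is the interchange between the Eilenberg--MacLane construction and the four functors $\fupperstar$, $\glowerstar$, $\fbarupperstar$, $\gbarlowerstar$; this is essentially formal because each is an (enriched) geometric morphism, so it commutes with $K(-,n)$ after passing to connective covers, but it needs to be stated carefully in the $\infty$-categorical setting of sheaves of spaces rather than complexes of abelian groups.
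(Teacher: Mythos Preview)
Your Postnikov-tower reduction has a real gap, one the paper explicitly flags in its introduction as the reason a naive inductive approach fails. The assertion that the successive layers of the tower are Eilenberg--MacLane sheaves $K(\pi_n F, n)$ with $\pi_n F$ an abelian sheaf on $Y$ is not correct: the map $\tau_{\leq n} F \to \tau_{\leq n-1} F$ need not admit a section over $Y$, so there is no fiber in $Y_{\et}$ to speak of, and $\pi_n F$ is a priori only a sheaf over $F$ (equivalently over $\tau_{\leq n-1} F$), not over $Y$. What one has instead is an $n$-gerbe in the slice over $\tau_{\leq n-1} F$, and an $n$-gerbe is an Eilenberg--MacLane object only when its classifying cohomology class vanishes. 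You also say nothing about $n \leq 1$: the bottom of the tower is a sheaf of $1$-groupoids with possibly nonabelian $\pi_1$, and abelian basechange does not address this. Two incidental remarks: $\Sigma$-torsion sheaves are truncated by definition, so the Postnikov limit is finite and your cohomological-dimension discussion is beside the point; and the formula $g_* K(A,n) \simeq K(\mathrm{R} g_* A, n)$ is wrong as written (the right-hand side is not a sheaf of spaces), though the intended reduction of a genuine Eilenberg--MacLane input to derived abelian basechange is fine.

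The paper's remedy begins with your own suggestion of localizing on $X$, but carries it further: one reduces, via strict localizations, to comparing \emph{global sections} over $Y_{(z)}$ and $W_{(x)}$ for each geometric point $x \to X$ with image $z \to Z$. For global sections the induction on truncation level does go through (Proposition~2.14): if $\Gamma(\tau_{\leq n-1} F)$ is nonempty one chooses a point, takes the fiber (now a genuine $(n{-}1)$-connected $n$-truncated object with basepoint), and invokes abelian basechange for its banding; if it is empty, the gerbe classification together with abelian basechange in degree $n{+}1$ shows both global-section spaces are empty. The base case of $1$-truncated sheaves is supplied by Giraud's smooth basechange for stacks in groupoids, an independent nonabelian input your outline omits entirely.
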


The assumption that $ F $ is $ \Sigma $-torsion in particular guarantees that there is an integer $ n \geq 0 $ such that the only nonzero homotopy sheaves of $ F $ are in degrees $ \leq n $.
Thus one might hope to prove \Cref{intro_thm:nonabelian_smooth_basechange} by a `Postnikov tower argument' inducting on the truncation degree $ n $.
The idea would be to consider the fibers of the map $ \fromto{F}{\trun_{\leq n-1} F} $ to the $ (n-1) $-truncation of $ F $; these fibers have a homotopy sheaf concentrated in the single degree $ n $.
Basechange for $ \trun_{\leq n-1} F $ is the inductive hypothesis, and basechange for the fibers follows from the classical cohomological basechange.
One might hope that this implies basechange for $ F $.

Unfortunately, there are (at least) two problems with this naive approach.
First, the sheaf $ \trun_{\leq n-1} F $ might not admit a global section, so it is not even clear how to start the inductive step.
That is, it may not even make sense to speak of fibers of the map $ \fromto{F}{\trun_{\leq n-1} F} $.
Second, even if $ F $ admits a global section, the pushforward functors appearing in the exchange transformation $ \fromto{\fupperstar\glowerstar(F)}{\gbarlowerstar\fbarupperstar(F)} $ do not commute with the truncation functors. 

\begin{proofoverview}
	One of the key points of this paper is that, by arguing differently, it \textit{is} possible to reduce \Cref{intro_thm:nonabelian_smooth_basechange} to a claim about basechange for sheaves of $ 1 $-groupoids (i.e., \textit{stacks in groupoids}) and basechange for sheaves of abelian groups. 
	The argument goes roughly as follows.
	First note that in order to show that the exchange morphism $ \fromto{\fupperstar\glowerstar(F)}{\gbarlowerstar\fbarupperstar(F)} $ is an equivalence, it suffices to check the claim after passing to the stalk at each geometric point $ \fromto{x}{X} $.
	We then re-express the stalk of an étale sheaf on $ X $ as the global sections of its pullback to the strict localization $ \Spec(\Ocal_{X,x}^{\sh}) $. 
	Applying an unconditional basechange result about pullbacks along the morphism \smash{$ \fromto{\Spec(\Ocal_{X,x}^{\sh})}{X} $} \cite[Proposition 7.5.1]{arXiv:1807.03281}, we reduce to proving the following: if $ X $ and $ Z $ are spectra of strictly henselian local rings and $ f $ is a prosmooth morphism induced by a local ring homomorphism, then the natural map
	\begin{equation*}
	   \Gammaet(Y; F) \to \Gammaet(W; \fbarupperstar(F)) 
	\end{equation*}
	is an equivalence (see \Cref{cor:when_is_the_exchange_transformation_an_equivalence}).
	Using the theory of $ n $-gerbes, we explain why, for this statement about global sections, it is possible to use a `Postnikov tower argument' to reduce the claim to the cases where $ F $ is a sheaf of $ 1 $-groupoids, and where $ F $ is an Eilenberg--MacLane object (see \Cref{prop:comparing_global_sections,cor:reduction_to_1-truncated_local}).
	The first case was proven by Giraud \cite[Chapitre VII, Théorème 2.1.2]{MR0344253}, and the second case is equivalent to the classical statement for cohomology groups of abelian sheaves.
\end{proofoverview}

This reduction to a claim about global sections of schemes over spectra of strictly henselian local rings works in complete generality.
As a result, using the same method we reprove Chough's nonabelian proper basechange theorem \cite[Theorem 1.2]{MR4493612}, as well as prove nonabelian refinements of the Gabber--Huber affine analogue of the proper basechange theorem \cites{MR1286833}{MR1214956} and the Fujiwara--Gabber rigidity theorem \cite[Corollary 6.6.4]{MR1360610}.
See \cref{subsec:nonabelian_basechange_theorems}.
We also apply the nonabelian smooth and proper basechange theorems to show that, after completion away from the residue characteristics, the étale homotopy types of the geometric fibers of a smooth proper morphism of schemes are invariant under specialization (see \cref{subsec:invariance_under_specialization}).

In the remainder of the introduction, we explain some applications of these nonabelian basechange theorems.


\subsection{Application: \arcdescent}\label{subsec:application_arc-descent}

Bhatt and Mathew recently introduced the \textit{\arctopology} on schemes \cite{MR4278670}.
The \arctopology is finer than the \vtopology, and \arcdescent has a number of useful consequences that do not follow from \vdescent.
For example, \arcsheaves satisfy \textit{Milnor excision} and a version of the Beauville--Laszlo \textit{formal gluing} theorem \cite{MR1320381}.
See \cite[Corollaries 4.25 \& 6.7]{MR4278670}.
Bhatt--Mathew also showed that many familiar étale sheaves satisfy \arcdescent, e.g., étale cohomology with torsion coefficients \cite[Theorem 5.4]{MR4278670}.
The key tool in their proof is the proper basechange theorem.

Once one has access to the nonabelian proper basechange theorem, it is not hard to adjust Bhatt and Mathew's arguments to prove a nonabelian version of this result.
Write $ \ProSpcfin $ for the \category of profinite spaces.
Given a scheme $ X $, write \smash{$ \Pietprofin(X) \in \ProSpcfin $} for the profinite étale homotopy type of $ X $.

\begin{theorem}[(\arcdescent for étale homotopy types; \Cref{thm:Pietprofin_satisfies_arc-descent})]\label{intro_thm:Pietprofin_satisfies_arc-descent}
	The functor
	\begin{equation*}
		\Pietprofin(-) \colon \Schqcqs \to \ProSpcfin
	\end{equation*}
	is a hypercomplete \arccosheaf.
	In other words, for any \archypercovering $ U_\bullet \to X $ the induced morphism
	\begin{equation*}
		\colim_{[n] \in \Deltaop} \Pietprofin(U_n) \to \Pietprofin(X)
	\end{equation*}
	is an equivalence in $ \ProSpcfin $.
\end{theorem}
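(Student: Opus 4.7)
The plan is to transport the Bhatt--Mathew proof of \arcdescent for torsion étale cohomology \cite[Theorem 5.4]{MR4278670} to the nonabelian setting, substituting the nonabelian proper basechange theorem of this paper for its classical counterpart. Bhatt and Mathew give a general criterion: a v-hypersheaf satisfies \archyperdescent if and only if it satisfies \aicvexcision, i.e., is excisive on the squares
\begin{equation*}
	\begin{tikzcd}
		\Spec(\kappa(\mathfrak{p})) \arrow[r] \arrow[d] & \Spec(V_{\mathfrak{p}}) \arrow[d] \\
		\Spec(V/\mathfrak{p}) \arrow[r] & \Spec(V)
	\end{tikzcd}
\end{equation*}
where $V$ is an absolutely integrally closed henselian valuation ring and $\mathfrak{p} \subset V$ is a prime ideal. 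Since $\Pietprofin$ is covariant, the task splits into two pieces: (i) show $\Pietprofin$ is a hypercomplete \vcosheaf, and (ii) show $\Pietprofin$ sends squares of the above form to pushouts in $\ProSpcfin$.

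For (i), the plan is to use that a morphism in $\ProSpcfin$ is an equivalence if and only if it induces an equivalence on $\map(-, K)$ for every $\uppi$-finite space $K$. This reduces \vhyperdescent for $\Pietprofin$ to \vhyperdescent for \Sigmatorsion étale cohomology valued in spaces, which by the Postnikov tower and gerbe method outlined in the proof overview of \Cref{intro_thm:nonabelian_smooth_basechange} reduces further to the classical \vhyperdescent for abelian torsion étale cohomology, already established in \cite{MR4278670}.

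For (ii), I would again test against $\uppi$-finite $K$, reducing to the claim that for every \Sigmatorsion étale sheaf of spaces $F$ on $\Spec(V)$, the global sections of $F$ over the four corners of the square above form a pullback in $\Spc$. The horizontal maps are closed immersions, so the nonabelian proper basechange theorem identifies the required pushforwards with restrictions to the closed subschemes; combined with the fact that $V$ (and its relevant quotients and localizations) is strictly henselian, so that global sections on these spectra agree with stalks at closed points, this yields the desired pullback, just as in the abelian argument of Bhatt--Mathew. The main obstacle I anticipate is controlling the set $\Sigma$ of invertible primes in this step: the residue characteristic can jump between $V_{\mathfrak{p}}$ and $V/\mathfrak{p}$, so the \Sigmatorsion hypothesis must be tracked carefully as one passes between the corners, perhaps by decomposing $F$ along the relevant characteristics; hypercompleteness of the resulting \arccosheaf should then come for free from the $\uppi$-finite coefficient formalism.
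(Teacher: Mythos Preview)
Your overall plan—test against $\pi$-finite $K$, verify that $\Gammaet(-;\underline{K})$ is a finitary v-hypersheaf satisfying \aicvexcision, and invoke Bhatt--Mathew's criterion—matches the paper's exactly. Both halves of the execution, however, deviate from the paper, and step (i) has a genuine gap.

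In step (i), the Postnikov/gerbe method of \Cref{prop:comparing_global_sections} does \emph{not} reduce all the way to the abelian case: it requires as a separate input the case of $1$-truncated sheaves (descent for étale sheaves of groupoids, in particular nonabelian $\Hup^1$ with finite nonabelian coefficients), which is not contained in Bhatt--Mathew's abelian result. You would have to supply this independently. The paper sidesteps the issue by not attempting a gerbe reduction at the level of the descent statement at all. Instead, it observes that Bhatt--Mathew's proof of \vdescent \cite[Proposition 5.2]{MR4278670} goes through \emph{verbatim} once one substitutes the nonabelian proper basechange theorem (\Cref{cor:nonabelian_proper_basechange}) for its classical counterpart; the gerbe reduction has already been used, once and for all, in establishing that basechange theorem.

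In step (ii) your anticipated obstacle is illusory, and proper basechange plays no role. Since $V$, $V_{\pfrak}$, $V/\pfrak$, and $\kappa(\pfrak)$ are all strictly henselian local rings, global sections over each identify with the stalk at the respective closed point. For any truncated étale sheaf $F$ on $\Spec(V)$ the excision square therefore becomes
\begin{equation*}
	\begin{tikzcd}
		F_{\mfrak} \arrow[r] \arrow[d, equals] & F_{\pfrak} \arrow[d, equals] \\
		F_{\mfrak} \arrow[r] & F_{\pfrak}
	\end{tikzcd}
\end{equation*}
(with $\mfrak$ the maximal ideal), which is trivially cartesian; there is no $\Sigma$-torsion bookkeeping or characteristic tracking to do. The paper packages essentially the same observation differently (\Cref{lem:etale_topoi_of_aic_valuation_rings} and \Cref{cor:constructible_sheaves_satisfy_aic-v-excision}): for an absolutely integrally closed valuation ring the étale and Zariski topoi agree, so \aicvexcision for constructible sheaves reduces to the poset pushout $\{0<\cdots<n\} \equivalent \{0<\cdots<i\} \cup_{\{i\}} \{i<\cdots<n\}$.
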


In the remainder of this subsection, let us explain what Milnor excision and formal gluing mean in the context of étale homotopy theory.
Recall that a commutative square of schemes
\begin{equation}\label{sq:Milnor_square}
	\begin{tikzcd}
		Z \arrow[d] \arrow[r, hooked] & X \arrow[d, "f"] \\ 
		Z' \arrow[r, "i"', hooked] & X'
	\end{tikzcd}
\end{equation}
is a \textit{Milnor square} if it is a pullback square, $ f $ is affine, $ i $ is a closed immersion, and the induced morphism $ \fromto{Z' \coproduct_Z X}{X'} $ is an isomorphism.%
\footnote{By \cite[Théorème 7.1]{MR2044495}, the previous conditions guarantee that the pushout of schemes $ Z' \coproduct_Z X $ exists.}
As a consequence of \arcdescent, we have:

\begin{corollary}[(Milnor excision)]\label{intro_cor:Pietprofin_satisfies_Milnor_excision}
	Given a Milnor square \eqref{intro_cor:Pietprofin_satisfies_Milnor_excision}, the induced square
	\begin{equation*}
		\begin{tikzcd}
			\Pietprofin(Z) \arrow[d] \arrow[r] & \Pietprofin(X) \arrow[d] \\ 
			\Pietprofin(Z') \arrow[r] & \Pietprofin(X')
		\end{tikzcd}
	\end{equation*}
	is a pushout square in $ \ProSpcfin $.
\end{corollary}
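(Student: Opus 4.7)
\begin{proofoverview}
The plan is to deduce the pushout statement from \Cref{intro_thm:Pietprofin_satisfies_arc-descent} by exhibiting the natural surjection $\pi \from X \sqcup Z' \to X'$ associated to the Milnor square as an \arccover. Given an absolutely integrally closed valuation ring $V$ and a map $\Spec(V) \to X'$, the image of the closed point either lies in $Z'$---in which case the map factors through the closed immersion $i$---or it lies in the open locus $X' \smallsetminus Z'$, where the hypothesis $Z' \coproduct_Z X \simeq X'$ forces $X \to X'$ to restrict to an isomorphism, so the map lifts through $X$. Standard arc-refinement arguments promote these pointwise lifts to a genuine \arccover.

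Applying \Cref{intro_thm:Pietprofin_satisfies_arc-descent} to the Čech nerve $Y_\bullet := (X \sqcup Z')^{\times_{X'}(\bullet+1)}$ of $\pi$ then yields an equivalence
\[
\Pietprofin(X') \;\simeq\; \colim_{[n] \in \Deltaop} \Pietprofin(Y_n)
\]
in $\ProSpcfin$. Using $X \times_{X'} Z' = Z$ (from the pullback hypothesis) and $Z' \times_{X'} Z' = Z'$ (since $i$ is a monomorphism), together with the fact that $\Pietprofin$ sends finite disjoint unions of qcqs schemes to coproducts in $\ProSpcfin$---a formal consequence of arc-descent applied to the summand decomposition of a disjoint union $A \sqcup B$---each $\Pietprofin(Y_n)$ decomposes into a coproduct of terms of the shape $\Pietprofin(X^{\times_{X'} a} \times_{X'} Z^{\times_{Z'} b} \times_{X'} (Z')^{\times_{X'} c})$.

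The main obstacle is the appearance of the self-products $X^{\times_{X'} k}$ for $k \geq 2$, since the affine map $f$ is not in general a monomorphism. Following the cosheaf-theoretic dual of Bhatt--Mathew's argument for Milnor excision for arc-sheaves \cite[Corollary 4.25]{MR4278670}, I would construct an \archypercover refining $Y_\bullet$ whose $n$th term is manifestly of the shape $X \sqcup Z^{\sqcup n} \sqcup Z'$; this identifies the colimit with the two-sided bar resolution of the pushout $\Pietprofin(Z') \coproduct_{\Pietprofin(Z)} \Pietprofin(X)$. A second application of \Cref{intro_thm:Pietprofin_satisfies_arc-descent} ensures that passing to this refinement does not change the colimit under $\Pietprofin$, yielding the desired pushout.
\end{proofoverview}
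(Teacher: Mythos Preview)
Your overall strategy—exhibit $X \sqcup Z' \to X'$ as an \arccover and analyze its Čech nerve—is the right shape, but there is a genuine gap in the \arccover step. Your case analysis on the image of the closed point of $\Spec(V)$ is incorrect: if the closed point lands in $Z'$ but the generic point lands in $X' \smallsetminus Z'$, the map $\Spec(V) \to X'$ does \emph{not} factor through the closed immersion $i$. This mixed case genuinely occurs, and producing a lift to $X$ after a faithfully flat extension requires more than the isomorphism $X \smallsetminus Z \cong X' \smallsetminus Z'$; one has to use the Milnor patching identity $A' \cong A \times_{A/I} A'/I'$ directly. (Also, the definition of an \arccover is phrased in terms of rank $\leq 1$ valuation rings, not absolutely integrally closed ones; you seem to be conflating the \arccover condition with \aicvexcision.)

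By contrast, the paper's argument is a one-line citation. It invokes \Cref{thm:arc-descent_critera} (that is, \cite[Theorem 4.1]{MR4278670}): for a finitary functor valued in \acategory compactly generated by cotruncated objects, being an arc-(co)sheaf is \emph{equivalent} to satisfying Milnor excision. Since \Cref{thm:Pietprofin_satisfies_arc-descent} shows that $\Pietprofin$ is a finitary hypercomplete \arccosheaf and $\ProSpcfin^{\op}$ is compactly generated by cotruncated objects, Milnor excision follows immediately. Your route is in effect re-deriving one direction of this equivalence by hand; since your second step already defers the delicate hypercover refinement to \cite[Corollary 4.25]{MR4278670}, you may as well invoke \Cref{thm:arc-descent_critera} from the outset and bypass both the \arccover verification and the Čech-nerve bookkeeping entirely.
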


Recall that a \textit{formal gluing datum} is a pair $ (\fromto{A}{B},I) $ of a ring homomorphism $ \fromto{A}{B} $ together with a finitely generated ideal $ I \subset A $ such that for each $ n \geq 0 $, we have $ A/I^n \isomorphic B/I^n B $ \cite[Definition 1.14]{MR4278670}.
Again by \arcdescent, we have:

\begin{corollary}[(formal gluing)]
	Given a formal gluing datum $ (\fromto{A}{B},I) $, the induced square
	\begin{equation*}
		\begin{tikzcd}
			\Pietprofin(\Spec(B) \sminus \Vup(IB)) \arrow[d] \arrow[r] & \Pietprofin(\Spec(B)) \arrow[d] \\ 
			\Pietprofin(\Spec(A) \sminus \Vup(I)) \arrow[r] & \Pietprofin(\Spec(A))
		\end{tikzcd}
	\end{equation*}
	is a pushout square in $ \ProSpcfin $.
\end{corollary}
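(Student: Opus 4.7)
The plan is to mimic Bhatt--Mathew's proof of formal gluing for \arcsheaves \cite{MR4278670}, substituting $\Pietprofin$ for abelian coefficients and using \Cref{intro_thm:Pietprofin_satisfies_arc-descent} as the key input. Write $X = \Spec(A)$, $X_B = \Spec(B)$, $V = X \sminus \Vup(I)$, and $U = X_B \sminus \Vup(IB) = X_B \times_X V$. The first step is to show that $p \colon X_B \sqcup V \to X$ is an \arccover. This follows from the valuative criterion: a map $\Spec(W) \to X$ from a rank-at-most-one valuation ring $W$ either satisfies $IW = W$, so that it factors through $V$; or $IW \subsetneq W$, in which case the formal gluing isomorphisms $A/I^n \simeq B/I^n B$ together with $I$-adic approximation in $W$ allow the map to be lifted through $X_B$ (possibly after a rank-one extension).

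Applying \Cref{intro_thm:Pietprofin_satisfies_arc-descent} to the \v{C}ech nerve $Y_\bullet = (X_B \sqcup V)^{\times_X \bullet + 1}$ of $p$ yields $\Pietprofin(X) \simeq \colim_{[n] \in \Deltaop} \Pietprofin(Y_n)$. Since $V \hookrightarrow X$ is a monomorphism (open immersion), iterated self-fiber-products of $V$ collapse, and
\[
Y_n \;\simeq\; X_B^{\times_X (n+1)} \;\sqcup\; \bigsqcup_{\emptyset \neq S \subsetneq \{0,\dots,n\}} \bigl(X_B^{\times_X |S|} \times_X V\bigr) \;\sqcup\; V,
\]
where $S$ indexes the positions of the $X_B$ factors. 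It remains to identify this $\Deltaop$-colimit with the pushout $\Pietprofin(X_B) \sqcup_{\Pietprofin(U)} \Pietprofin(V)$.

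The cleanest route is to pass to the $\infty$-topos of hypercomplete \arcsheaves: \Cref{intro_thm:Pietprofin_satisfies_arc-descent} implies that $\Pietprofin$ extends to a colimit-preserving functor from this topos to $\ProSpcfin$, so it suffices to verify the pushout identity at the level of representables. Because the Yoneda image of $V \to X$ remains a monomorphism and pushouts in $\infty$-topoi are pullback-stable, the canonical comparison map $q \colon y(X_B) \sqcup_{y(U)} y(V) \to y(X)$ is both an effective epimorphism (since $p$ is an \arccover) and a monomorphism, hence an equivalence. The main technical obstacle is the verification that $q$ is monic: this amounts to checking, via pullback-stability of the pushout, that the \v{C}ech nerve of $X_B \sqcup V \to y(X_B) \sqcup_{y(U)} y(V)$ matches that of $p$ in every simplicial degree, which uses in an essential way the monomorphicity of $V \to X$ and the Giraud-style compatibility of pushouts and pullbacks in the arc topos.
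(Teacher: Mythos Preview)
Your strategy is sound in outline, but the route you take is both longer than what the paper does and contains a genuine gap at the crucial step.

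The paper does not prove this statement from scratch. It simply observes that formal gluing is a formal consequence of being an \arcsheaf (resp.\ \arccosheaf) valued in a suitable target: this is \cite[Corollary~6.7]{MR4278670}. Since \Cref{intro_thm:Pietprofin_satisfies_arc-descent} shows that $(\Pietprofin)^{\op}$ is a finitary \arcsheaf valued in $\ProSpcfin^{\op}$, which is compactly generated by cotruncated objects, Bhatt--Mathew's result applies directly and the pushout square drops out. Your proposal instead tries to reprove \cite[Corollary~6.7]{MR4278670} in this special case.

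The gap is in your monomorphism claim for $q \colon y(X_B) \sqcup_{y(U)} y(V) \to y(X)$. You assert that the \v{C}ech nerve of $y(X_B) \sqcup y(V) \to P$ agrees with that of $p$, but to check this in degree~$1$ you need
\[
y(X_B) \times_P y(X_B) \;\simeq\; y(X_B \times_X X_B)\period
\]
The left-hand side cannot be computed by formal topos-theoretic manipulations alone: pullback-stability of the pushout yields only the relation $y(X_B) \simeq (y(X_B) \times_P y(X_B)) \sqcup_{(y(X_B) \times_P y(X_B)) \times_P y(V)} y(U)$, which does not determine $y(X_B) \times_P y(X_B)$. The monomorphicity of $V \to X$ is not enough here because $X_B \to X$ is not a monomorphism, so the higher self-products $X_B^{\times_X n}$ do not collapse. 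In Bhatt--Mathew's actual argument, the formal-gluing isomorphisms $A/I^n \cong B/I^nB$ are used in a more essential way than merely producing an \arccover; they force the map $\Spec(B \otimes_A B) \to \Spec(B)$ to be an arc-equivalence over $\Vup(I)$, which is what ultimately drives the identification. Your sketch does not invoke this, and without it the mono step does not go through. The cleanest fix is simply to cite \cite[Corollary~6.7]{MR4278670}, as the paper does.
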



\subsection{Application: Künneth formulas}\label{subsec:Kunneth_formuals}

Let $ k $ be a separably closed field and let $ X $ and $ Y $ be qcqs $ k $-schemes.
Chough observed that if $ Y $ is proper, then the nonabelian proper basechange theorem easily implies that the natural map
\begin{equation}\label{eq:Pietprofin_of_products}
	\Pietprofin(X \cross_k Y) \longrightarrow \Pietprofin(X) \cross \Pietprofin(Y)
\end{equation}
is an equivalence \cite[Theorem 5.3]{MR4493612}.
(On $ \uppi_1 $, this recovers the classical Künneth formula for étale fundamental groups \cites[Exposé X, Corollaire 1.7]{MR50:7129}[Corollary 4.1.23]{Kedlaya:sheaves_stacks_and_shtukas}.)
Similarly, if $ X $ is smooth, then the nonabelian smooth basechange theorem immediately implies that the map \eqref{eq:Pietprofin_of_products} becomes an equivalence after completion away from $ \characteristic(k) $.

We offer two refinements of these results.
First, using the fundamental fiber sequence for étale homotopy types \cite[Corollary 3.21]{arXiv:2209.03476}, we extend Chough's result to arbitrary base fields:

\begin{proposition}[(relative Künneth formula, proper case; \Cref{cor:relative_Kunneth_formula_proper})]\label{intro_prop:relative_Kunneth_formula_proper}
	Let $ k $ be a field with absolute Galois group $ \Gk $, and let $ X $ and $ Y $ be qcqs $ k $-schemes.
	If $ Y $ is proper over $ k $, then the natural map
	\begin{equation*}
		\Pietprofin(X \cross_k Y) \longrightarrow \displaystyle \Pietprofin(X) \crosslimits_{\BGk} \Pietprofin(Y) 
	\end{equation*}
	is an equivalence in $ \ProSpcfin $.
\end{proposition}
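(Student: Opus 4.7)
The plan is to reduce to Chough's absolute Künneth formula over the separable closure $\kbar$ by means of the fundamental fibre sequence for étale homotopy types \cite[Corollary 3.21]{arXiv:2209.03476}, applied to each of $X$, $Y$, and $X \cross_k Y$. This produces three fibre sequences in $\ProSpcfin$ all with target $\BGk$:
\begin{align*}
	\Pietprofin(X_{\kbar}) &\longrightarrow \Pietprofin(X) \longrightarrow \BGk, \\
	\Pietprofin(Y_{\kbar}) &\longrightarrow \Pietprofin(Y) \longrightarrow \BGk, \\
	\Pietprofin(X_{\kbar} \cross_{\kbar} Y_{\kbar}) &\longrightarrow \Pietprofin(X \cross_k Y) \longrightarrow \BGk.
\end{align*}
By the universal property of pullback together with the naturality of the fundamental fibre sequence, the Künneth comparison map $\Pietprofin(X \cross_k Y) \to \Pietprofin(X) \crosslimits_{\BGk} \Pietprofin(Y)$ is a morphism over $\BGk$.

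Next I would identify the homotopy fibres of both sides over the geometric basepoint $\ast \to \BGk$. On the left, the fundamental fibre sequence gives $\Pietprofin((X \cross_k Y)_{\kbar}) = \Pietprofin(X_{\kbar} \cross_{\kbar} Y_{\kbar})$. On the right, the fibre of a pullback $A \crosslimits_B C \to B$ over a point is the product of the individual fibres, producing $\Pietprofin(X_{\kbar}) \cross \Pietprofin(Y_{\kbar})$. The induced map between these fibres is precisely the absolute Künneth map over the separably closed field $\kbar$, which is an equivalence by Chough's theorem \cite[Theorem 5.3]{MR4493612}, since $Y_{\kbar}$ is proper over $\kbar$.

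To conclude, I would use that pullback along $\ast \to \BGk$ is conservative on $\ProSpcfin_{/\BGk}$: $\BGk$ is the classifying profinite space of the profinite group $\Gk$, so an object of $\ProSpcfin_{/\BGk}$ is equivalent data to a profinite space equipped with a continuous $\Gk$-action, and forgetting the action is conservative. Equivalently, one can compare the long exact sequences of homotopy pro-groups attached to the two fibre sequences via the five-lemma. Either way, an equivalence on fibres over the geometric basepoint upgrades to an equivalence over $\BGk$.

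The principal obstacle I anticipate is the final conservativity step: one must verify that the relevant descent along $\ast \to \BGk$ is available in $\ProSpcfin$, and that the fundamental fibre sequence is a genuine fibre sequence in this \category rather than merely an exact sequence of homotopy pro-groups. Granted these inputs from \cite{arXiv:2209.03476}, the remaining verifications --- that the Künneth map respects the projection to $\BGk$ and that its restriction to the geometric fibre agrees with the absolute Künneth map --- amount to routine base-change bookkeeping on the underlying schemes.
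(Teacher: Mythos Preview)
Your proposal is correct and follows essentially the same route as the paper: reduce to the absolute Künneth formula over $\kbar$ (Chough's theorem, recorded in the paper as \Cref{ex:Kunneth_formula_proper_case}) via the fundamental fibre sequence, then conclude by checking on fibres over the basepoint $\ast \to \BGk$. The paper packages this as an application of the general \Cref{prop:etale_homotopy_types_of_products_over_general_fields} with $\Sigma$ the set of all primes; in the proof of that proposition the conservativity step you flag is phrased as ``$\ast \to \BG$ is an effective epimorphism,'' which is the same input as your descent/conservativity observation.
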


\noindent Let $ p $ be a prime number or $ 0 $.
Given a scheme $ X $, write \smash{$ \Piet(X)\pprimecomp $} for the completion of the étale homotopy type of $ X $ at the set of primes \textit{different from} $ p $. 
Using \vdescent, the theory of alterations \cites{Illusie:On_Gabbers_refined_uniformization}[Exposé IX]{MR3309086}{MR1423020}{MR3665001}, and the fundamental fiber sequence, we prove:

\begin{proposition}[(prime-to-$ p $ relative Künneth formula; \Cref{cor:relative_Kunneth_formula_prime-to-p})]\label{intro_prop:relative_Kunneth_formula_prime-to-p}
	Let $ k $ be a field of characteristic $ p \geq 0 $ with absolute Galois group $ \Gk $, and let $ X $ and $ Y $ be qcqs $ k $-schemes.
	If the profinite group $ \Gk $ is prime-to-$ p $, then the natural map 
	\begin{equation*}
		\Piet(X \cross_k Y)\pprimecomp \longrightarrow \displaystyle \Piet(X)\pprimecomp \crosslimits_{\BGk} \Piet(Y)\pprimecomp 
	\end{equation*}
	is an equivalence in $ \ProSpcfin $.
\end{proposition}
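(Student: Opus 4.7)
The plan is to first invoke the fundamental fiber sequence to reduce to an absolute Künneth formula over an algebraically closed base, and then to combine nonabelian smooth basechange with $v$-descent and the theory of alterations to handle that absolute case.

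For the reduction, recall from \cite[Corollary 3.21]{arXiv:2209.03476} that every qcqs $k$-scheme $T$ sits in a fiber sequence
\begin{equation*}
	\Piet(T_{\bar{k}})\pprimecomp \longrightarrow \Piet(T)\pprimecomp \longrightarrow \BGk
\end{equation*}
in $\ProSpcpprime$, and note that the hypothesis that $\Gk$ is prime-to-$p$ guarantees $\BGk$ is already $p'$-complete. Applying this fiber sequence to $X$, $Y$, and $X \times_k Y$ and using the identification $(X \times_k Y)_{\bar{k}} \simeq X_{\bar{k}} \times_{\bar{k}} Y_{\bar{k}}$, the relative Künneth formula reduces to the absolute statement $\Piet(X' \times_{\bar{k}} Y')\pprimecomp \simeq \Piet(X')\pprimecomp \times \Piet(Y')\pprimecomp$ for qcqs $\bar{k}$-schemes $X'$ and $Y'$. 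Invoking invariance of $p'$-completed étale homotopy under purely inseparable base extension, I would further pass from $\bar{k}$ to its algebraic closure $\kalg$ and thereby assume the base field is algebraically closed.

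The smooth subcase of the absolute statement falls out of nonabelian smooth basechange. Namely, when $Y$ is smooth over $k$ and $Z$ is a $p'$-finite space viewed as a locally constant (hence $p'$-torsion) sheaf $\underline{Z}$ on $X$, \Cref{intro_thm:nonabelian_smooth_basechange} applied to the pullback square with $Y \to \Spec(k)$ in the role of the prosmooth morphism gives, after taking global sections on both sides of the exchange equivalence, an identification
\begin{equation*}
	\operatorname{Map}(\Piet(X \times_k Y)\pprimecomp, Z) \simeq \operatorname{Map}\bigl(\Piet(X)\pprimecomp \times \Piet(Y)\pprimecomp, Z\bigr) .
\end{equation*}
Letting $Z$ range over $p'$-finite spaces then yields the desired equivalence in $\ProSpcpprime$.

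For general qcqs $Y$, I would combine $v$-descent with alterations. Since $\Pietprofin$ satisfies \arcdescent by \Cref{intro_thm:Pietprofin_satisfies_arc-descent} and $p'$-completion preserves colimits, the functor $T \mapsto \Piet(T)\pprimecomp$ is a $v$-cosheaf; dually, for any $p'$-finite $Z$, the presheaf $T \mapsto \Gammaet(T, \underline{Z})$ satisfies $v$-hyperdescent. By de Jong--Gabber alterations, combined with Noetherian approximation to reduce qcqs to finite type, $Y$ admits a $v$-hypercover $Y_\bullet \to Y$ with each $Y_n$ smooth over $k$. Base change along $X \to \Spec(k)$ preserves $v$-hypercovers, so $X \times_k Y_\bullet \to X \times_k Y$ is also a $v$-hypercover. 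Combining $v$-descent, the smooth case applied termwise, and the fact that $\operatorname{Map}(\Piet(X)\pprimecomp, -)$ commutes with limits, one computes
\begin{equation*}
	\Gammaet(X \times_k Y, \underline{Z}) \simeq \lim_{[n] \in \Deltaop} \operatorname{Map}\bigl(\Piet(X)\pprimecomp \times \Piet(Y_n)\pprimecomp, Z\bigr) \simeq \operatorname{Map}\bigl(\Piet(X)\pprimecomp \times \Piet(Y)\pprimecomp, Z\bigr) ,
\end{equation*}
and letting $Z$ vary produces the Künneth equivalence.

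The main obstacle I expect is extending from finite type to general qcqs schemes: producing smooth $v$-hypercovers of qcqs $Y$ requires combining alterations with a careful Noetherian approximation, together with continuity of $\Piet(-)\pprimecomp$ along cofiltered limits of qcqs schemes with affine transition maps. A subsidiary point is that no prime-to-$p$ constraint on the degree of the alterations is required, since every proper surjection is an $h$-cover and a fortiori a $v$-cover.
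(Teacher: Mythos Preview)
Your overall strategy coincides with the paper's: reduce via the fundamental fiber sequence to the separably/algebraically closed case, then handle that absolute case by combining nonabelian smooth basechange with alterations and $v$-hyperdescent. The reduction step and the use of alterations are essentially as in the paper. However, your argument for the smooth subcase has a real gap.

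You assert that ``taking global sections on both sides of the exchange equivalence'' directly produces
\[
\operatorname{Map}\bigl(\Piet(X \times_k Y)\pprimecomp, Z\bigr) \simeq \operatorname{Map}\bigl(\Piet(X)\pprimecomp \times \Piet(Y)\pprimecomp, Z\bigr).
\]
But applying global sections to the exchange transformation only yields
\[
\Gammaet(X \times_k Y; \underline{Z}) \;\simeq\; \Gammaet\bigl(Y; \underline{\,\Gammaet(X; \underline{Z})\,}\bigr),
\]
which is the evaluation at $Z$ of the \emph{composition product} $\Piet(X) \circ \Piet(Y)$ (regarding prospaces as left exact accessible endofunctors of $\Spc$), not of the cartesian product. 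The comparison map $A \circ B \to A \times B$ in $\ProSpc$ is not an equivalence in general, and showing it becomes one here after $p'$-completion requires two further nontrivial inputs: that this comparison becomes an equivalence after protruncation for étale homotopy types (the paper's \Cref{ex:composition_product_of_etale_homotopy_types}), and that $\Sigma$-completion commutes with binary products of prospaces. The paper isolates exactly this step as \Cref{prop:Kunneth_formula_from_basechange}. The same elision resurfaces in your descent argument: your appeal to ``$\operatorname{Map}(\Piet(X)\pprimecomp, -)$ commutes with limits'' is implicitly working with the composition-product description, and to land in the cartesian product you again need the comparison; the paper's route at that stage is instead to invoke universality of geometric realizations in $\ProSpcpprime$ to pull the product through the colimit directly.
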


\noindent For $ k $ separably closed, \Cref{intro_prop:relative_Kunneth_formula_prime-to-p} recovers a result of Orgogozo \cite[Corollaire 4.9]{MR1975807}.
In addition, \Cref{intro_prop:relative_Kunneth_formula_proper,intro_prop:relative_Kunneth_formula_prime-to-p} imply Künneth formulas for symmetric powers (see \Cref{rem:symmetric_Kunneth_formula_proper_case,rem:symmetric_Kunneth_formula_prime-to-p}).


\subsection*{Linear overview}\label{subsec:linoverview}

\Cref{sec:background} recalls some background about $ n $-gerbes in \topoi and the étale homotopy type; the familiar reader can safely skip this section.
In \cref{sec:reduction_to_local_rings}, we prove nonabelian refinements of: the smooth basechange theorem, the proper basechange theorem, the Gabber--Huber affine analogue of the proper basechange theorem, and the Fujiwara--Gabber rigidity theorem.
See, in particular, \cref{subsec:nonabelian_basechange_theorems}.
We also explain why, after completion away from the residue characteristics, the étale homotopy type of the geometric fibers of a smooth and proper morphism of schemes is invariant under specialization, see \cref{subsec:invariance_under_specialization}.
In \cref{sec:arc-descent} we apply the nonabelian proper basechange theorem to show that the profinite étale homotopy type satisfies \arcdescent.
\Cref{sec:Kunneth_formulas} uses many of the tools developed in the previous sections to prove Künneth formulas for the étale homotopy type.



\begin{acknowledgments}
	We thank Clark Barwick, Bhargav Bhatt, Chang-Yeon Chough, Denis-Charles Cisinski, Elden Elmanto, and Bogdan Zavyalov for enlightening discussions around the contents of this paper.

	We would like to thank SFB 1085 `Higher Invariants' and the University of Regensburg for its hospitality.
	The first-named author gratefully acknowledges support from the UC President's Postdoctoral Fellowship, NSF Mathematical Sciences Postdoctoral Research Fellowship under Grant \#DMS-2102957, and a grant from the Simons Foundation (816048, LC). 
	The second-named author acknowledges support from the Deutsche Forschungsgemeinschaft (DFG) through the Collaborative Research Centre TRR 326 `Geometry and Arithmetic of Uniformized Structures',  project number 444845124.
	The third-named author was supported by the SFB 1085 `Higher Invariants' in Regensburg, funded by the DFG.
\end{acknowledgments}



\section{Background}\label{sec:background}

This section briefly recalls background on constructible and torsion étale sheaves of spaces (\cref{subsec:notation_and_terminology}), gerbes in \topoi (\cref{subsec:gerbes_in_topoi}), the étale homotopy type (\cref{subsec:the_etale_homotopy_type}), and exchange transformations (\cref{subsec:exchange_transformations}).


\subsection{Notation and terminology}\label{subsec:notation_and_terminology}

\begin{notation}
	Given a scheme $ X $, we write $ X_{\et} $ for the \topos of étale sheaves of \textit{spaces} on $ X $.
\end{notation}

\begin{notation}
	Let $ \X $ be \atopos and $ n \geq -2 $ an integer. 
	We write $ \X_{\leq n} \subset \X $ for the full subcategory spanned by the $ n $-truncated objects.
	This inclusion admits a left adjoint that we denote by $ \trun_{\leq n} \colon \fromto{\X}{\X_{\leq n}} $.
\end{notation}

\begin{nul}
	For a scheme $ X $, the subcategory $ X_{\et,\leq 0} \subset X_{\et} $ is the full subcategory spanned by the étale sheaves of \textit{sets} on $ X $.
\end{nul}

\begin{recollection}\label{rec:n-truncation_preserves_filtered_colimits}
	Since filtered colimits commute with finite limits in an \atopos, for any \topos $ \X $ and integer $ n \geq -1 $, the inclusion $ \X_{\leq n} \subset \X $ preserves filtered colimits.
	As a result, the endofunctor $ \trun_{\leq n} \colon \fromto{\X}{\X} $ preserves filtered colimits.
\end{recollection}

Let us now recall the nonabelian refinements of étale sheaves with torsion contained in a set of prime numbers.

\begin{notation}
	Write $ \Spc $ for the \category of (small) spaces and $ \Catinfty $ for the \category of (small) \categories.
\end{notation}

\begin{definition}\label{def:Sigma-finite}
	Let $ \Sigma $ be a set of prime numbers.
	\begin{enumerate}[label=\stlabel{def:Sigma-finite}, ref=\arabic*]
		\item A finite group $ G $ is a \defn{\Sigmagroup} if the order of $ G $ is in the multiplicative closure of $ \Sigma $. 

		\item We say that a space $ K $ is \defn{\pifinite} if $ K $ is truncated, $ \uppi_0(K) $ is finite, and all homotopy groups of $ K $ are finite.
		We write $ \Spcfin \subset \Spc $ for the full subcategory spanned by the \pifinite spaces.

		\item We say that a space $ K $ is \defn{\Sigmafinite} if $ K $ is \pifinite and all homotopy groups of $ K $ are \Sigmagroups.
		We write $ \SpcSigma \subset \Spcfin $ for the full subcategory spanned by the \Sigmafinite spaces.
	\end{enumerate} 
\end{definition}

\begin{notation}
	Let $ \X $ be \atopos.
	We write $ \Gamma_{\X,\ast} $ or $ \Gamma(\X;-) $ for the \emph{global sections functor} $ \fromto{\X}{\Spc} $.
	We write $ \Gammaupperstar_{\X} \colon \fromto{\Spc}{\X} $ for the left adjoint to $ \Gamma_{\X,\ast} $, the \emph{constant sheaf functor}.
	Given a scheme $ X $, we also write $ \Gammaet(X;-) $ for $ \Gamma(X_{\et};-) $.
\end{notation}

\begin{definition}[(torsion lisse sheaf)]\label{def:Sigma-torsion_lisse_sheaf}
	Let $ \X $ be \atopos, $ F \in \X $, and $ \Sigma $ a set of prime numbers.
	\begin{enumerate}[label=\stlabel{def:Sigma-torsion_lisse_sheaf}, ref=\arabic*]
		\item We say that $ F $ is \defn{locally constant} if there exists a cover $\{U_{i}\}_{i \in I}$ of the terminal object of $ \X $, a corresponding family $ \{K_{i}\}_{i \in I} $ of spaces, and for each $ i \in I $, an equivalence 
		\begin{equation*}
			F \times U_{i}\simeq \Gammaupperstar_{\X}(K_{i}) \cross U_i
		\end{equation*}
	  	in $ \X_{/U_i} $.

		\item We say that $ F $ is \defn{\Sigmatorsion lisse} if $ F $ is locally constant and, in addition, the set $ I $ can be chosen to be finite and the spaces $K_{i}$ can be chosen to be \Sigmafinite.
		In the case that $ \Sigma $ is the set of all primes, we simply say that $ F $ is \defn{lisse}.
		We write $ \X^{\lisse} \subset \X $ for the full subcategory spanned by the lisse objects.
	\end{enumerate}
\end{definition}

\begin{definition}[(torsion étale sheaf)]\label{def:Sigma-torsion_etale_sheaf}
	Let $ X $ be a qcqs scheme, $ F \in X_{\et} $ an étale sheaf, and $ \Sigma $ a set of prime numbers.
	\begin{enumerate}[label=\stlabel{def:Sigma-torsion_etale_sheaf}, ref=\arabic*]
		\item We say that $ F $ is \defn{\Sigmatorsion constructible} if there exists a finite poset $P$ and a stratification $ \{X_p\}_{p \in P} $ of $ X $ by qcqs locally closed subschemes such that for each $ p \in P $, the sheaf $ \restrict{F}{X_p} $ is \Sigmatorsion lisse.
		In the case that $ \Sigma $ is the set of all primes, we simply say that $ F $ is \defn{constructible}.

		\item We say that $ F $ is \defn{\Sigmatorsion} if $ F $ is truncated and $ F $ can be written as the colimit of a filtered diagram of \Sigmatorsion constructible étale sheaves on $ X $.
		In the case that $ \Sigma $ is the set of all primes, we simply say that $ F $ is \defn{torsion}.
	\end{enumerate}
\end{definition}

\begin{remark}
	For a qcqs scheme $ X $, the subcategory of $ X_{\et} $ spanned by the \Sigmatorsion sheaves is closed under finite limits and truncations.
\end{remark}


\subsection{Gerbes in \texorpdfstring{$\infty$}{∞}-topoi}\label{subsec:gerbes_in_topoi}

In this subsection, we quickly recall the theory of gerbes in \atopos from \cite[\HTTsubsec{7.2.2}]{HTT}.

\begin{notation}
	Let $ \X $ be \atopos, $ F \in \X $, and $ n \geq 0 $ an integer.
	We write $ \uppi_n(F) \in (\X_{/F})_{\leq 0} $ for the $ n $-th homotopy object of $ F $, see \HTT{Definition}{6.5.1.1}.
	If $ n \geq 1 $, then $ \uppi_n(F) $ is naturally a group object of $ (\X_{/F})_{\leq 0} $, which is abelian if $ n \geq 2 $.

	Given a global section $ \gamma \colon \fromto{\ast}{F} $, we write $ \uppi_n(F,\gamma) \colonequals \gammaupperstar \uppi_n(F) $ for the pullback of $ \uppi_n(F) $ along $ \gamma $.
	Then for $ n \geq 1 $, the object $ \uppi_n(F,\gamma) $ is a group object of $ \X_{\leq 0} $, which is abelian if $ n \geq 2 $.
\end{notation}

\begin{recollection}[($ n $-gerbes)]
	Let $ \X $ be \atopos, $ F \in \X $, and $ n \geq 2 $ an integer. 
	Then $ F $ is called an \defn{$ n $-gerbe on $ \X $} if it is $ n $-truncated and $ (n-1) $-connected.%
	\footnote{We use the terminology for connectedness explained in \cites[\S3.3]{MR4186137}[\S4.1]{MR4334846}.}
	If $ F $ is an $ n $-gerbe, then the functor
	\begin{equation*}
		F \times (-) \colon \X \to \X_{/F}
	\end{equation*}
	restricts to an equivalence on $ (n-1) $-truncated objects \HTT{Lemma}{7.2.1.13}; in particular, on $ 0 $-truncated objects.
	In particular there is an unique abelian group object $ A \in \Abobj(\X_{\leq 0}) $ such that
	\begin{equation*}
		\uppi_n(F) \equivalent F \times A
	\end{equation*}
	in $ \X_{/F} $.
	In this case, we say that $ F $ is \defn{banded by $ A $}.
	Write $ p^F_! \colon \X_{/F} \to \X $ for the forgetful functor; the proof of \HTT{Lemma}{7.2.1.13} shows that there is an equivalence
	\begin{equation*}
		A \equivalent \trun_{\leq 0} p^F_!(\uppi_n(F)) \period
	\end{equation*}
\end{recollection}

\begin{recollection}[(Eilenberg--MacLane objects)]
	Let $ \X $ be \atopos and $ n \geq 1 $.
	A \defn{degree $ n $ Eilenberg--MacLane object} of $ \X $ is a pointed object $ \fromto{\pt}{F} $ that is $ n $-truncated and $ (n-1) $-connected.
	We write
	\begin{equation*}
		\EM_n(\X) \subset \X_{\ast}
	\end{equation*}
	for the full subcategory spanned by the degree $ n $ Eilenberg--MacLane objects.
	For $ n \geq 2 $, given a degree $ n $ Eilenberg--MacLane object $ \gamma \colon \ast \to F $, the $ n $-gerbe $ F $ is banded by the sheaf of homotopy groups $ \uppi_n(F,\gamma) \in \X_{\leq 0} $. 
	In particular we have an isomorphism
	\begin{equation*}
		\trun_{\leq 0} p^F_!(\uppi_n(F)) \cong \uppi_n(F,\gamma) \period
	\end{equation*}

	The functor 
	\begin{align*}
		\EM_1(\X) &\longrightarrow \Grpobj(\X_{\leq 0}) \\
		[\gamma \colon \pt \to F] &\longmapsto \uppi_1(F,\gamma)
	\end{align*}
	defines an equivalence between the \category of degree $ 1 $ Eilenberg--MacLane objects and the $ 1 $-category of group objects of $ \X_{\leq 0} $, see \HTT{Proposition}{7.2.2.12}.
	For $ n \geq 2 $, the functor
	\begin{align*}
		\EM_n(\X) &\longrightarrow \Abobj(\X_{\leq 0}) \\
		[\gamma \colon \pt \to F] &\longmapsto \uppi_n(F,\gamma)
	\end{align*}
	defines an equivalence between the \category of degree $ n $ Eilenberg--MacLane objects and the $ 1 $-category of abelian group objects of $ \X_{\leq 0} $.
	We write
	\begin{equation*}
		\Kup(-,1) \colon \equivto{\Grpobj(\X_{\leq 0})}{\EM_1(\X)} \andeq \Kup(-,n) \colon \equivto{\Abobj(\X_{\leq 0})}{\EM_n(\X)}
	\end{equation*}
	for the inverse equivalences.
\end{recollection}

\begin{recollection}[(cohomology in \atopos)]
	Let $ \X $ be \atopos, and let $ A $ be an abelian group object of $ \X_{\leq 0} $.
	For each integer $ n \geq 0 $, we write
	\begin{equation*}
		\Hup^n(\X;A) \colonequals \uppi_0 \Gamma(\X;\Kup(A,n))
	\end{equation*}
	for the \defn{$ n $-th cohomology group} of $ \X $ with coefficients in $ A $.
\end{recollection}

We recall the following classification result for gerbes in \atopos $ \X $. It is an immediate consquence of \HTT{Theorem}{7.2.2.26}.

\begin{theorem}\label{thm:classificationofgerbes}
	Let $ \X $ be \atopos, and let $ A $ be an abelian group object of $  \X_{\leq 0} $, and let $ n \geq 2 $ be an integer.
	Let $ \gamma \colon \ast \to \Kup(A,n+1) $ be a degree $ n+1 $ Eilenberg--MacLane object of $ \X $.
	Given an $ n $-gerbe $ F $ on $ \X $ banded by $ A $, there is a map $ \alpha_F \colon \ast \to \Kup(A,n+1) $ that is uniquely determined by $ F $ up to equivalence, and a pullback square
	\begin{equation*}
		\begin{tikzcd}
			F \arrow[r] \arrow[d] & \ast \arrow[d, "\gamma"] \\
			\ast \arrow[r, "\alpha_F"'] & \Kup(A,n+1) \period
		\end{tikzcd}
	\end{equation*} 
	In particular sending a map $ \alpha \colon \ast \to \Kup(A,n+1) $ to the above pullback defines a bijection
	\begin{equation*}
		\Hup^{n+1}(\X;A) \bijection \{\textup{$ n $-gerbes on $ \X $ banded by $ A $}\}/\kern-0.2em\equivalent \period
	\end{equation*}
\end{theorem}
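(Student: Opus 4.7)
The plan is to derive \Cref{thm:classificationofgerbes} from \HTT{Theorem}{7.2.2.26}, which provides the classification of $n$-gerbes on $\X$ banded by $A$: that result establishes a natural bijection between equivalence classes of such gerbes and $\uppi_0 \Gamma(\X; \Kup(A, n+1)) = \Hup^{n+1}(\X; A)$. What remains is to identify the classifying map $\alpha_F$ produced there with the map in the displayed pullback square.

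First, I would check that for any map $\alpha \colon \ast \to \Kup(A, n+1)$, the pullback $F_{\alpha} \colonequals \ast \times_{\Kup(A,n+1)} \ast$ formed using $\gamma$ and $\alpha$ is an $n$-gerbe on $\X$ banded by $A$. Since $\Kup(A, n+1)$ is $(n+1)$-truncated and $n$-connected, the fiber of $\gamma$ along any global point is automatically $n$-truncated and $(n-1)$-connected; hence $F_\alpha$ is an $n$-gerbe. Its band is identified with $A$ using the long exact sequence of homotopy sheaves for the fiber sequence $F_\alpha \to \ast \to \Kup(A, n+1)$, which yields a canonical isomorphism $\uppi_n(F_\alpha) \simeq \uppi_{n+1}(\Kup(A, n+1)) \simeq A$. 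Since homotopic maps produce equivalent pullbacks, the assignment $\alpha \mapsto F_\alpha$ descends to a well-defined function from $\Hup^{n+1}(\X; A)$ to equivalence classes of $n$-gerbes banded by $A$.

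Finally, I would identify this assignment with the inverse of the bijection of \HTT{Theorem}{7.2.2.26}. HTT constructs $\alpha_F$ as the unique homotopy class of maps $\ast \to \Kup(A, n+1)$ such that pulling back the universal $n$-gerbe on $\Kup(A, n+1)$ along $\alpha_F$ recovers $F$ as an $n$-gerbe banded by $A$. Since the universal $n$-gerbe on $\Kup(A, n+1)$ is itself the homotopy fiber of $\gamma$ along the identity of $\Kup(A, n+1)$, composing the two pullback squares gives exactly the square in the statement. The main obstacle lies in this last unpacking step, i.e., matching HTT's formulation with the explicit pullback presentation here. This is essentially formal, amounting to recognizing the universal $n$-gerbe on $\Kup(A, n+1)$ as the homotopy fiber of its basepoint, which is an instance of the defining universal property of $\Kup(A, n+1)$ as the classifying object for $n$-gerbes banded by $A$.
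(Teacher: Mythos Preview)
Your proposal is correct and takes essentially the same approach as the paper: the paper does not give a proof at all, but simply states that the result ``is an immediate consequence of \HTT{Theorem}{7.2.2.26}.'' Your write-up just unpacks that citation in more detail, verifying that the fiber of $\gamma$ along any global section is an $n$-gerbe banded by $A$ and that this matches the HTT classification; this is exactly the content the paper leaves implicit.
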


\noindent As a direct consequence we obtain the following (see \HTT{Remark}{7.2.2.28}):

\begin{corollary}
	\label{cor:global_section_iff_vanishing}
	Let $ \X $ be \atopos, let $ A $ be an abelian group object of $  \X_{\leq 0} $, and let $ n \geq 2 $ be an integer.
	An $ n $-gerbe $ F $ banded by $ A $ admits a global section if and only if the corresponding cohomology class $ \alpha_F \in \Hup^{n+1}(\X;A) $ vanishes.
\end{corollary}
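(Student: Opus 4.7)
The plan is to extract the corollary directly from the classifying pullback square furnished by Theorem \ref{thm:classificationofgerbes}. First I would observe that $F$ admits a global section if and only if the structure map $F \to \ast$ admits a section. Since $F$ sits in the pullback square
\begin{equation*}
\begin{tikzcd}
F \arrow[r] \arrow[d] & \ast \arrow[d, "\gamma"] \\
\ast \arrow[r, "\alpha_F"'] & \Kup(A,n+1),
\end{tikzcd}
\end{equation*}
the universal property of the pullback identifies the mapping space $\map_{\X}(\ast, F)$ with the space of homotopies between the two composites $\ast \to \Kup(A,n+1)$, which are precisely $\gamma$ and $\alpha_F$. In other words, $\map_{\X}(\ast,F)$ is equivalent to the path space between $\gamma$ and $\alpha_F$ inside $\Gamma(\X;\Kup(A,n+1))$.

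Consequently, $F$ admits a global section if and only if $\gamma$ and $\alpha_F$ lie in the same connected component of $\Gamma(\X;\Kup(A,n+1))$, i.e., define the same element of $\uppi_0 \Gamma(\X;\Kup(A,n+1)) = \Hup^{n+1}(\X;A)$. Since $\gamma$ factors through the distinguished basepoint of $\Kup(A,n+1)$ by construction, it represents the zero class, so the condition becomes $[\alpha_F] = 0$. There is no real obstacle here beyond correctly reading off the universal property of the pullback; the corollary is a formal consequence of Theorem \ref{thm:classificationofgerbes} combined with the definition of $\Hup^{n+1}(\X;A)$ as $\uppi_0$ of global sections of the corresponding Eilenberg--MacLane object.
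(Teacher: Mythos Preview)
Your proposal is correct and takes essentially the same approach as the paper: the paper states the corollary as a direct consequence of Theorem~\ref{thm:classificationofgerbes} (with a reference to \HTT{Remark}{7.2.2.28}) without further argument, and your proof simply spells out that direct consequence via the universal property of the pullback square.
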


Later we need to use the fact that the banding of a \Sigmatorsion étale sheaf of spaces is a \Sigmatorsion étale sheaf of abelian groups.
For the proof, we need the following simple observation.

\begin{lemma}\label{lem:properties_of_cotruncation}
	Let $ \X $ be \atopos and $ n \geq 1 $ an integer.
	\begin{enumerate}[label=\stlabel{lem:properties_of_cotruncation}, ref=\arabic*]
		\item\label{lem:properties_of_cotruncation.1} The functor 
		\begin{align*}
			\trun_{\geq n} \colon \X_{\pt} &\to \X_{\pt} \\ 
			F &\mapsto \fib(F \to \trun_{\leq n-1}(F))
		\end{align*}
		preserves filtered colimits.

		\item\label{lem:properties_of_cotruncation.2} For all $ F \in \X_{\pt} $, the object $ \trun_{\geq n}(F) $ is $ (n-1) $-connected.

		\item\label{lem:properties_of_cotruncation.3} If $ F \in \X_{\pt} $ is $ n $-truncated, then $ \trun_{\geq n}(F) $ is a degree $ n $ Eilenberg--MacLane object.

		\item\label{lem:properties_of_cotruncation.4} If $ F \in \X_{\pt} $ is $ (n-1) $-connected, then the natural morphism $ \fromto{\trun_{\geq n}(F)}{F} $ is an equivalence.
	\end{enumerate}
\end{lemma}

\begin{proof}
	For \enumref{lem:properties_of_cotruncation}{1}, first note that the $ (n-1) $-truncation functor $ \trun_{\leq n-1} \colon \fromto{\X}{\X} $ preserves filtered colimits (\Cref{rec:n-truncation_preserves_filtered_colimits}), and that filtered colimits commute with finite limits in \atopos.
	Hence the claim follows from the fact that the forgetful functor $ \fromto{\X_{\pt}}{\X} $ creates limits and weakly contractible colimits.

	For \enumref{lem:properties_of_cotruncation}{2}, notice that the unit $ \fromto{F}{\trun_{\leq n}(F)} $ is $ (n-1) $-connected.
	Since $ (n-1) $-connected morphisms are stable under pullback \HTT{Proposition}{6.5.1.16}, the projection  $ \fromto{\trun_{\geq n}(F)}{\pt} $ is $ (n-1) $-connected, as desired.

	For \enumref{lem:properties_of_cotruncation}{3}, by definition we need to check that the pointed object $ \trun_{\geq n}(F) $ is $ (n-1) $-connected and $ n $-truncated.
	The $ (n-1) $-connectedness follows from \enumref{lem:properties_of_cotruncation}{2}.
	The $ n $-truncatedness of $ \trun_{\geq n}(F) $ follows from the fact that $ F $, $ \trun_{\leq n-1}(F) $, and $ \pt $ are $ n $-truncated and the full subcategory $ \X_{\leq n} \subset \X $ is closed under limits.

	For \enumref{lem:properties_of_cotruncation}{4}, note that since $ F $ is $ (n-1) $-connected, $ \trun_{\leq n-1}(F) \equivalent \pt $.
	Hence the claim follows from the fact that equivalences are stable under pullback.
\end{proof}

\begin{proposition}\label{prop:pi_n_of_torsion_gerbe_torsion}
	Let $ X $ be a qcqs scheme and $ \Sigma $ a set of prime numbers.
	Let $ F \in X_{\et} $ be a \Sigmatorsion sheaf of spaces on $ X $. 
	If $ F $ is an $ n $-gerbe, then the banding $ A_F \in \Abobj(X_{\et, \leq 0}) $ of $ F $ is a \Sigmatorsion étale sheaf of abelian groups on $ X $ (i.e., its stalks are \Sigmatorsion groups). 
\end{proposition}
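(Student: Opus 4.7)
The strategy is to verify the claim stalkwise at geometric points. Let $ \bar x \to X $ be a geometric point; we show that the abelian group $ (A_F)_{\bar x} $ is \Sigmatorsion.

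\textbf{Step 1: Homotopy groups of $ F_{\bar x} $.}
Since $ F $ is \Sigmatorsion, \Cref{def:Sigma-torsion_etale_sheaf} lets us write $ F \simeq \colim_\alpha F_\alpha $ as a filtered colimit of \Sigmatorsion constructible étale sheaves on $ X $. For each $ \alpha $, choosing a stratum containing the image of $ \bar x $ shows, via \Cref{def:Sigma-torsion_lisse_sheaf}, that the stalk $ (F_\alpha)_{\bar x} $ is a \Sigmafinite space. Because the stalk functor at $ \bar x $ is a left-exact left adjoint and homotopy groups of spaces commute with filtered colimits, it follows that $ F_{\bar x} $ is a truncated space whose homotopy groups are filtered colimits of \Sigmagroups, hence \Sigmatorsion abelian groups.

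\textbf{Step 2: Identifying $ (A_F)_{\bar x} $ with a homotopy group of $ F_{\bar x} $.}
Since $ F $ is an $ n $-gerbe with $ n \geq 2 $, it is in particular $ 1 $-connected, so its stalk $ F_{\bar x} $ is a non-empty, connected space; fix a basepoint $ \gamma \in F_{\bar x} $. The defining equivalence $ \uppi_n(F) \simeq F \times A_F $ in $ X_{\et,/F} $ becomes, upon taking stalks at $ \bar x $, an equivalence $ \uppi_n(F_{\bar x}) \simeq F_{\bar x} \times (A_F)_{\bar x} $ of objects over $ F_{\bar x} $. Taking the fiber over $ \gamma $ yields
\begin{equation*}
	(A_F)_{\bar x} \simeq \uppi_n(F_{\bar x},\gamma),
\end{equation*}
which is \Sigmatorsion by Step 1.

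\textbf{The main obstacle} is justifying the passage in Step 2, namely that the stalk functor commutes with the formation of the sheaf of homotopy objects $ \uppi_n(F) $. This is standard for the étale site: the stalk at a geometric point is a left-exact left adjoint (indeed, a point of the topos), and the homotopy sheaves $ \uppi_n(F) $ are obtained by sheafifying presheaves built from finite limits, all of which such a functor preserves.
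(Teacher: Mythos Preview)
Your proof is correct and takes a slightly more direct route than the paper's. The paper first works étale-locally to assume $F$ admits a global section $\gamma$ (hence is an Eilenberg--MacLane object), then uses compactness of $\ast$ in $X_{\et,\leq n}$ to lift $\gamma$ through the filtered diagram of $\Sigma$-torsion constructibles, replaces each $F_i$ by $\trun_{\geq n}F_i$ to force them to be Eilenberg--MacLane, and only then passes to stalks. You instead pass to stalks immediately: since the stalk functor is a left-exact left adjoint, $F_{\bar x}$ is a filtered colimit of $\Sigma$-finite spaces, and the compactness of $\ast$ in $\Spc$ (rather than in $X_{\et,\leq n}$) lets you lift the basepoint and compute $\uppi_n(F_{\bar x},\gamma)$ as a filtered colimit of finite $\Sigma$-groups. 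Your identification $(A_F)_{\bar x} \simeq \uppi_n(F_{\bar x},\gamma)$ via the defining equivalence $\uppi_n(F) \simeq F \times A_F$ is exactly what the paper does at the end. The net effect is that you perform the same reductions, but in $\Spc$ rather than in $X_{\et}$; this is marginally more economical since it bypasses the intermediate replacement by Eilenberg--MacLane objects at the sheaf level.
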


\begin{proof}
	Since all functors involved in the construction of the banding $A_F \colonequals \trun_{\leq 0}p^F_!(\uppi_n F) $ are compatible with restriction along an étale map $ U \to X $, the claim is étale local on $ X $.
	Therefore we may assume that the gerbe $ F $ admits a global section $ \gamma \colon \ast \to F $, hence is a degree $ n $ Eilenberg--MacLane object.
	We can now write $ F $ as the filtered colimit of a diagram $ F_{\bullet} \colon \fromto{\Ical}{X_{\et}} $ of $ n $-truncated \Sigmatorsion constructible sheaves.
	Since the étale \topos $ X_{\et} $ is coherent \cite[Proposition 3.7.3]{arXiv:1807.03281}, the terminal object $ \ast \in X_{\et,\leq n} $ is compact \SAG{Propostion}{A.2.3.1}.
	Thus there is some $ i_0 \in \Ical $ such that $ \gamma $ factors through $ F_{i_0} $.
	Since the forgetful functor $ \fromto{\Ical_{i_0/}}{\Ical} $ is colimit-cofinal \cite[\HTTthm{Example}{5.4.5.9} \& \HTTthm{Lemma}{5.4.5.12}]{HTT}, replacing $ \Ical $ by $ \Ical_{i_0/} $ it follows that we may write the pointed object $ (F,\gamma) $ as a filtered colimit of \textit{pointed} objects $ (F_i,\gamma_i) $ such that each $ F_i $ is $ n $-truncated and \Sigmatorsion constructible.

	\Cref{lem:properties_of_cotruncation} shows that by replacing $ F_i $ by
	\begin{equation*}
		\trun_{\geq n}(F_i) = \fib( F_i \to \trun_{\leq n-1} F_i)
	\end{equation*}
	we may assume that all $ (F_i,\gamma_i) $ are also degree $ n $ Eilenberg--MacLane objects.
	Since the functor 
	\begin{equation*}
		\uppi_n(-) \colon \EM_n(X_{\et}) \to  X_{\et,\leq 0}
	\end{equation*}
	preserves filtered colimits, we may thus assume that $ F $ is \Sigmatorsion constructible and degree $ n $ Eilenberg--MacLane.
	Now let $ \fromto{x}{X} $ be a geometric point.
	Since homotopy groups are compatible with taking stalks, the group $ (A_F)_x = \uppi_n(F,\gamma)_x$ is isomorphic to $ \uppi_n(F_x,\gamma_x) $.
	But because $ F $ is \Sigmatorsion constructible, $ F_x $ is a \Sigmafinite space and therefore $ \uppi_n(F_x,\gamma_x)  $ is \Sigmafinite, as desired.
\end{proof}


\subsection{The étale homotopy type}\label{subsec:the_etale_homotopy_type}

In this paper, we make use of the description of the étale homotopy type of Artin--Mazur--Friedlander \cites[\S 9]{MR0245577}[\S 4]{MR676809} via Lurie's shape theory for \topoi.
In this subsection, we recall what we need of the theory.
We refer the reader to \cites[Chapters 4 \& 11]{arXiv:1807.03281}[\S2]{MR4367219}[\S2]{arXiv:1905.06243} for more background on shape theory and to \cite[\S 5]{MR3763287} for the relation to the classical definition of the étale homotopy type.

We begin by setting our notation for \proobjects and completions of prospaces.

\begin{notation}
	Given \acategory $ \Ccal $, we write $ \Pro(\Ccal) $ for the \category of \proobjects in $ \Ccal $ obtained by formally adjoining cofiltered limits to $ \Ccal $.
	The existence of $ \Pro(\Ccal) $ is a special case of (the dual of) \HTT{Proposition}{5.3.6.2}.
\end{notation}

\noindent We make extensive use of the following explicit presentation of the \category of \proobjects.

\begin{recollection}[{\cite[\SAGthm{Definition}{A.8.1.1} \& \SAGthm{Proposition}{A.8.1.6}]{SAG}}]\label{rec:pro-objects_as_left_exact_accessible_functors}
	Let $ \Ccal $ be an accessible \category with finite limits (e.g., $ \Ccal = \Spc $).
	Then there is a natural identification
	\begin{equation*}
		\Pro(\Ccal) \equivalent \Funlexacc(\Ccal,\Spc)^{\op}
	\end{equation*}
	with the opposite of the \category of left exact accessible functors $ \fromto{\Ccal}{\Spc} $.
\end{recollection}

\begin{remark}[{\HTT{Corollary}{5.4.3.6}}]
	Let $ \Ccal $ be a small \category.
	Then $ \Ccal $ is accessible if and only if $ \Ccal $ is idempotent complete.
	Moreover, if $ \Ccal $ is accessible, then given an accessible \category $ \Dcal $, every functor $ \fromto{\Ccal}{\Dcal} $ is accessible.
\end{remark}

\begin{nul}
	In particular, for every set of primes $ \Sigma $, the small \category $ \SpcSigma $ is accessible and every functor $ \fromto{\SpcSigma}{\Spc} $ is accessible.
\end{nul}

\begin{recollection}[(\Sigmacompletion)]
	Let $ \Sigma $ be a set of prime numbers.
	The inclusion functor $ \ProSpcSigma \subset \ProSpc $ admits a left adjoint
	\begin{equation*}
		(-)\Sigmacomp \colon \fromto{\ProSpc}{\ProSpcSigma}
	\end{equation*}
	called \defn{\Sigmacompletion}.
	If $ \Sigma $ is the set of all primes, we simply refer to \Sigmacompletion as \defn{profinite completion}.

	Under the identifications
	\begin{equation*}
		\ProSpc \equivalent \Funlexacc(\Spc,\Spc)^{\op} \andeq \ProSpcSigma \equivalent \Funlex(\SpcSigma,\Spc)^{\op} 
	\end{equation*}
	the functor $ (-)\Sigmacomp $ admits a very convenient description: it is given by pre-composition with the inclusion $ \SpcSigma \subset \Spc $.
\end{recollection}

Now we recall the basics of shape theory.

\begin{recollection}[(shape of \atopos)]\label{rec:shape}
	Write $ \RTop_{\infty} $ for the \category of \topoi and (right adjoints in) geometric morphisms.
	The \defn{shape} is a left adjoint functor
	\begin{equation*}
		\Shape \colon \fromto{\RTop_{\infty}}{\ProSpc}
	\end{equation*}
	that admits the following explicit description.
	\begin{enumerate}[label=\stlabel{rec:shape}, ref=\arabic*]
		\item Given \atopos $ \X $, the shape $ \Shape(\X) $ is the left exact accessible functor $ \fromto{\Spc}{\Spc} $ given by the composite
		\begin{equation*}
			\Gamma_{\X,\ast}\Gammaupperstar_{\X} \colon \fromto{\Spc}{\Spc} \period 
		\end{equation*}
		That is, for each space $ K $, the value of $ \Shape(\X) $ on $ K $ is the global sections of the constant object of $ \X $ with value $ K $.

		\item Given a geometric morphism $ \flowerstar \colon \fromto{\X}{\Y} $ with unit $ \unit \colon \fromto{\id{\Y}}{\flowerstar \fupperstar} $, the induced morphism of prospaces $ \fromto{\Shape(\X)}{\Shape(\Y)} $ corresponds to the morphism
		\begin{equation*}
			\Gamma_{\Y,\ast} \unit \Gammaupperstar_{\Y} \colon \Gamma_{\Y,\ast} \Gammaupperstar_{\Y} \longrightarrow \Gamma_{\Y,\ast} \flowerstar \fupperstar \Gammaupperstar_{\Y} \equivalent \Gamma_{\X,\ast} \Gammaupperstar_{\X}
		\end{equation*}
		in $ \ProSpc^{\op} \subset \Fun(\Space,\Space) $.
	\end{enumerate}
	We refer the reader to \cites[\HTTsubsec{7.1.6}]{HTT}[\S 2]{MR3763287} for more details.
\end{recollection}

\begin{notation}
	Given \atopos $ \X $, we write $ \Shapeprofin(\X) $ for the profinite completion of $ \Shape(\X) $. 
	We call $ \Shapeprofin(\X) $ the \defn{profinite shape} of $ \X $.
\end{notation}

\begin{notation}
	Given a scheme $ X $, we write $ \Piet(X) \colonequals \Shape(X_{\et}) $ for the shape of the étale \topos of $ X $.
	We call $ \Piet(X) $ the \defn{étale homotopy type} of $ X $.
	We write $ \Pietprofin(X) $ for the profinite shape of $ X_{\et} $ and refer to $ \Pietprofin(X) $ as the \defn{profinite étale homotopy type} of $ X $.
\end{notation}

We make frequent use of the following reformulation of what it means for a geometric morphism to induce an equivalence on shapes.

\begin{observation}
	Let $ \flowerstar \colon \fromto{\X}{\Y} $ be a geometric morphism of \topoi and let $ \Sigma $ be a set of prime numbers.
	Then the induced map $ \fromto{\Shape(\X)}{\Shape(\Y)} $ is an equivalence if and only if for each space $ K $, the induced map on global sections $ \fromto{\Gamma(\Y;K)}{\Gamma(\X;K)} $ is an equivalence.
	Similarly, the induced map 
	\begin{equation*}
		\fromto{\Shape(\X)\Sigmacomp}{\Shape(\Y)\Sigmacomp}
	\end{equation*}
	on \Sigmacomplete shapes is an equivalence if and only if for each \Sigmafinite space $ K $, the induced map on global sections $ \fromto{\Gamma(\Y;K)}{\Gamma(\X;K)} $ is an equivalence.
\end{observation}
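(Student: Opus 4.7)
The plan is to unwind the description of the shape functor given in \Cref{rec:shape} and reduce both equivalences to the fact that equivalences of \proobjects are detected pointwise under the representation of \proobjects as left exact accessible functors.

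Under the identification $\ProSpc \equivalent \Funlexacc(\Spc,\Spc)^{\op}$ from \Cref{rec:pro-objects_as_left_exact_accessible_functors}, the shape $\Shape(\X)$ corresponds to the functor $K \mapsto \Gamma(\X; K) = \Gamma_{\X,\ast} \Gammaupperstar_{\X}(K)$, and \Cref{rec:shape} (2) identifies the morphism $\Shape(\X) \to \Shape(\Y)$ induced by $\flowerstar$ with the natural transformation
\begin{equation*}
	\Gamma(\Y;-) \longrightarrow \Gamma(\X;-)
\end{equation*}
obtained from the unit $\unit \colon \id{\Y} \to \flowerstar\fupperstar$. The first step of the proof is the observation that a morphism in $\Pro(\Spc)$ is an equivalence if and only if the associated natural transformation in $\Funlexacc(\Spc,\Spc)$ is an equivalence; since equivalences in any functor \category are detected pointwise, this is exactly the condition that $\Gamma(\Y;K) \to \Gamma(\X;K)$ is an equivalence for every space $K$, proving the first assertion.

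For the second assertion, I would use the analogous presentation $\ProSpcSigma \equivalent \Funlex(\SpcSigma,\Spc)^{\op}$ together with the description of \Sigmacompletion as precomposition with the inclusion $\SpcSigma \subset \Spc$, recalled just before \cref{rec:shape}. Under this identification, the \Sigmacomplete shape $\Shape(\X)\Sigmacomp$ is represented by the restriction of $K \mapsto \Gamma(\X;K)$ to $\SpcSigma$, and the induced map $\Shape(\X)\Sigmacomp \to \Shape(\Y)\Sigmacomp$ is the restriction of the natural transformation above. Again equivalences in $\ProSpcSigma$ correspond to pointwise equivalences of the associated functors on $\SpcSigma$, which is precisely the condition that $\Gamma(\Y;K) \to \Gamma(\X;K)$ is an equivalence for each \Sigmafinite space $K$.

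The only substantive step is the fact that a morphism of left exact accessible functors $\Spc \to \Spc$ (respectively, left exact functors $\SpcSigma \to \Spc$) is an equivalence if and only if it is a pointwise equivalence; this is immediate from the full faithfulness of the inclusions $\Funlexacc(\Spc,\Spc) \subset \Fun(\Spc,\Spc)$ and $\Funlex(\SpcSigma,\Spc) \subset \Fun(\SpcSigma,\Spc)$ and the pointwise nature of equivalences in functor \categories. I do not anticipate a genuine obstacle: the content of the observation is entirely a matter of unpacking the definitions from \cref{rec:pro-objects_as_left_exact_accessible_functors,rec:shape}.
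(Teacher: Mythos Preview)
Your proposal is correct and matches the paper's intent: the paper states this as an unproved \emph{observation}, regarding it as immediate from the identifications in \Cref{rec:pro-objects_as_left_exact_accessible_functors,rec:shape} and the description of \Sigmacompletion as restriction along $\SpcSigma \subset \Spc$. Your unwinding is exactly the argument the reader is expected to supply.
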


One of the most important results about the profinite shape of \atopos is that it is characterized by the fact that it classifies lisse objects.
 
\begin{notation}
	Let $ \Ccal $ be \acategory.
	We write
	\begin{equation*}
		\Fun(-,\Ccal) \colon \fromto{\ProCat^{\op}}{\Catinfty}
	\end{equation*}
	for the unique functor that extends $ \Fun(-,\Ccal) \colon \fromto{\Catinfty^{\op}}{\Catinfty} $ and transforms cofiltered limits in $ \ProCat $ to filtered colimits in $ \Catinfty $.
\end{notation}

\begin{theorem}[{(monodromy for lisse objects \cite[Proposition 4.4.18]{arXiv:1807.03281})}]
	Let $ \X $ be \atopos.
	Then there is a natural equivalence of \categories
	\begin{equation*}
		\X^{\lisse} \equivalent \Fun(\Shapeprofin(\X),\Spcfin) \period
	\end{equation*} 
\end{theorem}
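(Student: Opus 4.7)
The plan is to construct a natural functor $ \Phi \colon \Fun(\Shapeprofin(\X), \Spcfin) \to \X^{\lisse} $ and then show it is an equivalence. Write $ \Shapeprofin(\X) = \lim_{i \in I} K_i $ as a cofiltered limit of \pifinite spaces, so that the extension of $ \Fun(-, \Spcfin) $ to pro-categories identifies the source with $ \colim_{i \in I^{\op}} \Fun(K_i, \Spcfin) $. For each $ i $, the structure map $ \Shapeprofin(\X) \to K_i $ corresponds, via the $ \Gammaupperstar_\X \dashv \Gamma_{\X,\ast} $ adjunction of \Cref{rec:shape}, to a global section of $ \Gammaupperstar_\X(K_i) \in \X $, equivalently to a geometric morphism $ \X \to \Spc_{/K_i} $. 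The slice \topos $ \Spc_{/K_i} $ satisfies $ (\Spc_{/K_i})^{\lisse} \simeq \Fun(K_i, \Spcfin) $ and has profinite shape $ K_i $, so pullback along this geometric morphism yields a functor $ \Fun(K_i, \Spcfin) \to \X^{\lisse} $. These functors are natural in $ i $ and assemble into $ \Phi $.

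Next I would show $ \Phi $ is an equivalence. For full faithfulness it suffices by descent to treat the trivial objects $ \Gammaupperstar_\X(K), \Gammaupperstar_\X(L) $ for $ K, L \in \Spcfin $: the $ \Gammaupperstar_\X \dashv \Gamma_{\X,\ast} $ adjunction gives $ \mathrm{Map}_\X(\Gammaupperstar_\X(K), \Gammaupperstar_\X(L)) \simeq \mathrm{Map}_\Spc(K, \Gamma(\X; \Gammaupperstar_\X(L))) $, and since $ L $ is \pifinite the object $ \Gamma(\X; \Gammaupperstar_\X(L)) $ coincides with $ \Shapeprofin(\X)(L) $, matching the corresponding mapping space on the right-hand side. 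For essential surjectivity, given $ F \in \X^{\lisse} $, \Cref{def:Sigma-torsion_lisse_sheaf} supplies a finite trivializing cover $ \{U_j\}_{j \in J} $ together with \pifinite spaces $ \{L_j\}_{j \in J} $; \v{C}ech descent along this cover presents $ F $ as the limit of a cosimplicial diagram of trivial lisse sheaves, and the descent data of this diagram defines a geometric morphism $ \X \to \Spc_{/K} $ for a \pifinite space $ K $ built from the $ L_j $'s and the iterated self-intersections of the $ U_j $'s, exhibiting $ F $ in the image of $ \Phi $.

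The main obstacle is the essential surjectivity step: coherently packaging the \v{C}ech descent data of a trivializing cover into a single classifying map to a \pifinite space at all homotopical levels. At the level of $ \uppi_1 $ this is the classical statement that a locally constant sheaf of sets corresponds to a map to $ \mathrm{BAut} $ of its fiber, but a lisse object in an \topos carries coherent gluing data at every truncation level, and all of this must be recorded in a single morphism of pro-\pifinite spaces. The gerbe classification of \cref{subsec:gerbes_in_topoi}, applied inductively up the Postnikov tower of $ F $, is what makes this globalization tractable.
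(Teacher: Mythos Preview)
The paper does not prove this theorem: it is stated as background and attributed to \cite[Proposition 4.4.18]{arXiv:1807.03281}, with no argument given here. There is thus no proof in this paper to compare your proposal against.

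On the merits of your sketch: the construction of $\Phi$ is fine, but the full faithfulness step has a real gap. You claim that ``by descent it suffices to treat the trivial objects $\Gammaupperstar_{\X}(K), \Gammaupperstar_{\X}(L)$'', yet a general object of $\colim_i \Fun(K_i,\Spcfin)$ is a nonconstant functor $K_i \to \Spcfin$, and its image under $\Phi$ is a genuinely nonconstant lisse sheaf. Your adjunction computation handles only the constant case, and the word ``descent'' does not by itself bridge the gap: after passing to a trivializing cover the objects become constant, but you then need to reassemble the mapping space from the \v{C}ech diagram and identify the result with a mapping space in $\Fun(\Shapeprofin(\X),\Spcfin)$, which is exactly the nontrivial content. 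A cleaner route is to use that $\Spcfin$ is cartesian closed, so for $F,G \in \Fun(K_i,\Spcfin)$ the internal mapping object $k \mapsto \Map(F(k),G(k))$ again lands in $\Spcfin$; its pullback to $\X$ is lisse with \pifinite values, and its global sections are then computed by the profinite shape, matching the mapping space on the functor-category side. Your essential-surjectivity paragraph is honest that this is the crux, but the outline you give (package \v{C}ech data into a map to a single \pifinite space) is a restatement of the goal rather than a mechanism for achieving it.
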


\begin{theorem}[{\SAG{Corollary}{E.2.3.3}}]\label{thm:equivalences_on_lisse_sheaves_and_profinite_shapes}
	Let $ \flowerstar \colon \fromto{\X}{\Y} $ be a geometric morphism of \topoi.
	The following are equivalent:
	\begin{enumerate}[label=\stlabel{thm:equivalences_on_lisse_sheaves_and_profinite_shapes}]
		\item The pullback functor $ \fupperstar $ restricts to an equivalence $ \equivto{\Y^{\lisse}}{\X^{\lisse}} $. 

		\item The induced map of profinite spaces $ \fromto{\Shapeprofin(\X)}{\Shapeprofin(\Y)} $ is an equivalence.
	\end{enumerate}
\end{theorem}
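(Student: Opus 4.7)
The plan is to derive the theorem directly from the monodromy theorem for lisse objects by naturality. Since the equivalence \smash{$\X^{\lisse} \equivalent \Fun(\Shapeprofin(\X),\Spcfin)$} is natural in $\X$, setting $\varphi \colonequals \Shapeprofin(f)$, the geometric morphism $\flowerstar$ produces a commutative square
\[
\begin{tikzcd}
  \Y^{\lisse} \arrow[r, "\fupperstar"] \arrow[d, "\sim"'] & \X^{\lisse} \arrow[d, "\sim"] \\
  \Fun(\Shapeprofin(\Y),\Spcfin) \arrow[r, "\varphi^{\ast}"'] & \Fun(\Shapeprofin(\X),\Spcfin),
\end{tikzcd}
\]
in which the lower horizontal arrow is precomposition along $\varphi$. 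The implication (2) $\Rightarrow$ (1) is then immediate: precomposition along an equivalence of profinite spaces is an equivalence of functor \categories, and hence so is $\fupperstar$ restricted to lisse objects.

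For (1) $\Rightarrow$ (2), the square shows that $\varphi^{\ast}$ must be an equivalence of \categories, and one is reduced to establishing that the functor
\[
  \Phi \colon \ProSpcfin^{\op} \longrightarrow \Catinfty, \qquad K \longmapsto \Fun(K,\Spcfin)
\]
is conservative — in fact, fully faithful.

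I expect this reconstruction statement, that a profinite space is determined by its \category of \pifinite local systems, to be the main technical obstacle. One natural route is to exhibit a retraction of $\Phi$: for any profinite space $K$, the \category $\Fun(K,\Spcfin)$ is itself the \category of lisse objects in a canonical ``classifying \topos'' for $K$-shaped local systems, and this classifying \topos has profinite shape $K$. Applying $\Shapeprofin$ to $\varphi^{\ast}$, viewed as coming from a geometric morphism between the corresponding classifying \topoi, then recovers $\varphi$, so $\varphi^{\ast}$ being an equivalence forces $\varphi$ to be one. Alternatively, one can use \Cref{rec:pro-objects_as_left_exact_accessible_functors} together with Pro-Yoneda to identify $\Map_{\ProSpcfin}(K,L)$ directly with the corresponding mapping space of precomposition functors, thereby establishing fully faithfulness of $\Phi$ by hand.
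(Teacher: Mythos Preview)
The paper does not supply a proof of this statement; it is quoted as \SAG{Corollary}{E.2.3.3}. Your route via the naturality of the monodromy equivalence is reasonable, and the implication (2) $\Rightarrow$ (1) is immediate.

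For (1) $\Rightarrow$ (2), the reduction to conservativity of $\Phi \colon \ProSpcfin^{\op} \to \Catinfty$ is the right move, but your stronger claim that $\Phi$ is \emph{fully faithful} is false: already for $K = L = \ast$ one has $\Fun(\ast,\Spcfin) \equivalent \Spcfin$, and the space of endofunctors of $\Spcfin$ is far from contractible. Conservativity, however, does hold and is all you need. Your second sketch can be made precise as follows. The functor $\varphi^{\ast}$ preserves the terminal object and the constant objects $\underline{M}$ for each $M \in \Spcfin$. Writing $K \equivalent \lim_i K_i$ with $K_i \in \Spcfin$, the space of global sections of $\underline{M}$ in $\Fun(K,\Spcfin) \equivalent \colim_i \Fun(K_i,\Spcfin)$ is $\colim_i M^{K_i}$, which is exactly the value on $M$ of $K$ regarded as an object of $\Funlex(\Spcfin,\Spc)^{\op}$. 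Thus if $\varphi^{\ast}$ is an equivalence, then $\varphi$ induces an equivalence on mapping spaces into every $M \in \Spcfin$, and Yoneda in $\ProSpcfin$ finishes. Your first sketch via classifying \topoi is less self-contained and risks circularity: producing a geometric morphism between the classifying \topoi from the bare equivalence $\varphi^{\ast}$, rather than from $\varphi$ itself, already presupposes a recognition principle of the sort you are trying to establish.
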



\subsection{Exchange transformations}\label{subsec:exchange_transformations}

We now recall the key compatibility of exchange transformations that we need to use in our reduction to strictly henselian local rings in the proofs of the nonabelian basechange theorems.

\begin{definition}
	Let 
	\begin{equation}\label{square:oriented}
		\begin{tikzcd}
			\Wcal \arrow[r, "\fbarlowerstar"] \arrow[d, "\gbarlowerstar"'] & \Ycal \arrow[d, "\glowerstar"] \arrow[dl, phantom, "\scriptstyle \sigma" below right, "\Longleftarrow" sloped] \\ 
			\Xcal \arrow[r, "\flowerstar"'] & \Zcal
		\end{tikzcd}
	\end{equation}
	be a square of \categories and functors commuting up to a natural transformation $ \sigma $.
	Assume that the functors $ \flowerstar $ and $ \fbarlowerstar $ admit left adjoints $ \fupperstar $ and $ \fbarupperstar $, respectively.
	Write $ \counit_f \colon \fromto{\fupperstar \flowerstar}{\id{\Xcal}}$ for the counit and $ \unit_{\fbar} \colon \fromto{\id{\Ycal}}{\fbarlowerstar\fbarupperstar}$ for the unit.
	The \defn{exchange transformation} associated to the oriented square \eqref{square:oriented} is the composite natural transformation
	\begin{equation*}
		\begin{tikzcd}[sep=3em]
			\Ex \colon \fupperstar \glowerstar \arrow[r, "\fupperstar \glowerstar\unit_{\fbar}"] & \fupperstar \glowerstar\fbarlowerstar\fbarupperstar \arrow[r, "\fupperstar \sigma \fbarupperstar"] & \fupperstar \flowerstar\gbarlowerstar\fbarupperstar \arrow[r, "\counit_f \gbarlowerstar\fbarupperstar"] & \gbarlowerstar\fbarupperstar \period 
		\end{tikzcd}
	\end{equation*}	
	Note that if $ \sigma $ is an equivalence (so that \eqref{square:oriented} commutes), then the middle morphism in the definition of $ \Ex $ is an equivalence.
\end{definition}

\noindent The following is immediate from the functoriality of the `mate correspondence'; see \cites[Theorem B \& Corollary F]{arXiv:2011.08808}[\S2.2]{MR0357542}[Theorem B.3.6]{MR4354541}.

\begin{proposition}\label{prop:compatibility_of_exchange_transformations}
    Let 
    \begin{equation*}\label{cube:general_categories}
        \begin{tikzcd}[column sep={8ex,between origins}, row sep={8ex,between origins}]
            \Wcal' \arrow[rr, "\pbarlowerstar"] \arrow[dd, "\qbarlowerstar"']  \arrow[dr, "\wlowerstar" description] & & \Ycal' \arrow[dd, "\qlowerstar" near end]  \arrow[dr, "\ylowerstar"] \\
            & \Wcal \arrow[rr, crossing over, "\fbarlowerstar" near start] & & \Ycal \arrow[dd, "\glowerstar"]  \\
            \Xcal' \arrow[rr, "\plowerstar"' near end] \arrow[dr, "\xlowerstar"'] & & \Zcal' \arrow[dr, "\zlowerstar" description] \\
            & \Xcal \arrow[rr, "\flowerstar"'] \arrow[from=uu, crossing over, "\gbarlowerstar"' near start] & & \Zcal \comma & 
        \end{tikzcd}
    \end{equation*}
    be a commutative cube of \categories and right adjoint functors.
    Then the square
    \begin{equation*}
        \begin{tikzcd}[sep=3em]
            \xupperstar \fupperstar \glowerstar \arrow[rr, "\xupperstar\Ex"] \arrow[d, "\wr"'{xshift=0.2ex}] & & \xupperstar \gbarlowerstar \fbarupperstar \arrow[d, "\Ex \fbarupperstar"] \\ 
            \pupperstar \zupperstar \glowerstar \arrow[d, "\pupperstar\Ex"'] & & \qbarlowerstar \wupperstar \fbarupperstar \arrow[d, "\wr"{xshift=-0.2ex}] \\ 
            \pupperstar \qlowerstar \yupperstar \arrow[rr, "\Ex\yupperstar"']  & & \qbarlowerstar \pbarupperstar \yupperstar 
        \end{tikzcd}
    \end{equation*}
    canonically commutes.
    Here the indicated equivalences are natural identifications of adjoints.
\end{proposition}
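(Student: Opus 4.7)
The plan is to deduce this coherence statement formally from the functoriality of the mate (i.e., exchange) construction in the $\infty$-categorical setting, as established in the references cited in the proposition. Concretely, the assignment sending a commutative square of right adjoints to its exchange transformation organises into a functor between appropriate higher-categorical structures on squares that respects horizontal and vertical pasting; the target diagram of the proposition will be exhibited as arising from two distinct such pastings of the six faces of the cube.

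To set this up, I would first pin down which face of the cube produces which arrow in the target square. The front, back, left, and right faces contribute the four labelled exchange arrows: the top arrow arises from the exchange transformation of the front face pre-whiskered by $\xupperstar$, the bottom arrow from the exchange of the back face post-whiskered by $\yupperstar$, and similarly the middle-left and middle-right arrows come from the exchanges of the right and left faces, respectively. The top and bottom faces of the cube are themselves commutative squares of right adjoints whose data transfers, under the mate construction, to canonical equivalences of composites of left adjoints; these are the two unlabelled "natural identifications of adjoints" on the sides of the target diagram.

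With this dictionary in place, the two routes around the target square arise from two pasting decompositions of the cube: the top-right route factors the cube through the front face and the left face (using the bottom face to identify the two source expressions for the initial object), while the bottom-left route factors it through the right face and the back face (using the top face to identify the two target expressions). Both decompositions describe the same 3-cell in the source higher-categorical structure, so the mate functor sends them to the same composite 2-cell in the target. The main obstacle is therefore not mathematical content but rather the bookkeeping needed to present this cube as a 3-cell of the abstract kind handled by the cited references; once that is done, the statement is an instance of functoriality. In a pinch, one could also verify commutativity by hand — unfolding each arrow using the definition of $\Ex$, then repeatedly invoking naturality and the triangle identities for the various $(\fupperstar \dashv \flowerstar)$ adjunctions — but this essentially reproduces the abstract mate calculus in coordinates.
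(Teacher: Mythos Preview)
Your proposal is correct and matches the paper's approach exactly: the paper does not give a proof at all, but simply states that the result is ``immediate from the functoriality of the `mate correspondence''' and cites the same references you invoke. Your outline of how the six faces of the cube contribute to the target square and how the two pasting decompositions coincide under the mate functor is a faithful unpacking of what those references provide.
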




\section{From classical basechange to nonabelian basechange}\label{sec:reduction_to_local_rings}

The goal of this section is to prove nonabelian refinements of: the smooth basechange theorem (\Cref{cor:nonabelian_smooth_basechange}), the proper basechange theorem (\Cref{cor:nonabelian_proper_basechange}), the Gabber--Huber affine analogue of proper basechange (\Cref{cor:affine_analogue_of_proper_basechange}), and the Fujiwara--Gabber rigidity theorem (\Cref{cor:Fujiwara-Gabber_rigidity}).

In \cref{subsec:generalities_on_strictly_henselian_local_rings} we recall how to describe stalks of étale sheaves on strictly henselian local rings in terms of global sections.
In \cref{subsec:stalk_of_the_exchange_transformation}, we give a simple description of the stalk of an exchange transformation (\Cref{lem:compatibility_with_local_exchange_transformations}).
Using this description, \cref{subsec:reduction_to_the_local_case} proves the key technical results that let us reduce proving nonabelian basechange theorems to classical basechange results (\Cref{prop:comparing_global_sections,cor:global_sections_over_local_rings,cor:reduction_to_1-truncated_local}).
\Cref{subsec:nonabelian_basechange_theorems} deduces nonabelian refinements of all of the basechange theorems mentioned in the previous paragraph.
In \cref{subsec:invariance_under_specialization}, we apply the nonabelian smooth and proper basechange theorems to show that, after completion away from the residue characteristics, the étale homotopy types of the geometric fibers of a smooth proper morphism of schemes are invariant under specialization (see \Cref{prop:invariance_under_specalization}).


\subsection{Generalities on strictly henselian local rings}\label{subsec:generalities_on_strictly_henselian_local_rings}

We start with some general facts about strictly henselian local rings, specifically that global sections can be computed as the stalk at the closed point.

\begin{notation}\label{ntn:local_rings}
    Let $ A $ and $ B $ be strictly henselian local rings, and $ \phi \colon \fromto{B}{A} $ a local ring homomorphism.
    Write $ f \colon \fromto{\Spec(A)}{\Spec(B)} $ for the induced map on spectra.
    Write $ \fromto{x}{\Spec(A)} $ and $ \fromto{z}{\Spec(B)} $ for the geometric points specified by the residue fields of $ A $ and $ B $, respectively.
\end{notation}

\begin{proposition}[(stalks via global sections)]\label{prop:stalks_via_global_sections_for_strictly_henselian_local_rings}
    In the setting of \Cref{ntn:local_rings}: 
    \begin{enumerate}[label=\stlabel{prop:stalks_via_global_sections_for_strictly_henselian_local_rings}, ref=\arabic*]
        \item\label{prop:stalks_via_global_sections_for_strictly_henselian_local_rings.1} There is a natural equivalence
        \begin{equation*}
            \xupperstar \equivalent \Gammaet(\Spec(A);-) 
        \end{equation*} 
        of functors $ \fromto{\Spec(A)_{\et}}{\Spc} $.
        Hence $ \xlowerstar \colon \fromto{\Spc}{\Spec(A)_{\et}} $ is right adjoint to the global sections functor.

        \item\label{prop:stalks_via_global_sections_for_strictly_henselian_local_rings.2} There are natural equivalences
        \begin{equation*}
            \Gammaet(\Spec(A);\fupperstar(-)) \equivalent 
            \xupperstar \fupperstar \equivalent \zupperstar \equivalent \Gammaet(\Spec(B);-)
        \end{equation*} 
        of functors $ \fromto{\Spec(B)_{\et}}{\Spc} $.
    \end{enumerate}
\end{proposition}

\begin{proof}
    The proof of \enumref{prop:stalks_via_global_sections_for_strictly_henselian_local_rings}{1} in the setting of $ 1 $-topoi is given in \cite[Exposé VIII, Proposition 4.6]{MR50:7131}.
    The proof given there works \textit{verbatim} in the setting of \topoi. 
    For \enumref{prop:stalks_via_global_sections_for_strictly_henselian_local_rings}{2}, note that since $ \phi $ is a local ring homomorphism, we have a commutative square of schemes
    \begin{equation*}
        \begin{tikzcd}
            x \arrow[r] \arrow[d, hooked] & z \arrow[d, hooked] \\ 
            \Spec(A) \arrow[r, "f"'] & \Spec(B) \period
        \end{tikzcd}
    \end{equation*}
    Passing to étale \topoi, we obtain a commutative square of \topoi
     \begin{equation}\label{sq:etale_topoi_of_local_rings}
        \begin{tikzcd}
            x_{\et} \arrow[r] \arrow[d, hooked] & z_{\et} \arrow[d, hooked] \\ 
            \Spec(A)_{\et} \arrow[r, "\flowerstar"'] & \Spec(B)_{\et} \period
        \end{tikzcd}
    \end{equation}
    Since $ x $ and $ z $ are geometric points, $ x_{\et} \equivalent \Spc $ and $ z_{\et} \equivalent \Spc $.
    Since $ \Spc $ is the terminal \topos, we deduce that the top horizontal geometric morphism in \eqref{sq:etale_topoi_of_local_rings} is the identity.
    This shows that $ \xlowerstar \flowerstar \equivalent \zlowerstar $; equivalently, $ \fupperstar \xupperstar \equivalent \zupperstar $.
    Applying \enumref{prop:stalks_via_global_sections_for_strictly_henselian_local_rings}{1} to both of the strictly henselian local rings $ A $ and $ B $, we deduce that 
    \begin{align*}
        \Gammaet(\Spec(A);\fupperstar(-)) &\equivalent \fupperstar \xupperstar \\
        &\equivalent \zupperstar \equivalent \Gammaet(\Spec(B);-) \period \qedhere
    \end{align*}
\end{proof}

In this setting, the constant sheaf functor is also fully faithful:

\begin{lemma}
    Let $ \X $ be \atopos.
    If the global sections functor $ \Gammalowerstar \colon \fromto{\X}{\Spc} $ admits a right adjoint \smash{$ \Gammauppersharp \colon \fromto{\Spc}{\X} $}, then both \smash{$ \Gammauppersharp $} and \smash{$ \Gammaupperstar $} are fully faithful.
\end{lemma}

\begin{proof}
    To see that $ \Gammauppersharp $ is fully faithful, first note that $ \Gammauppersharp $ is a geometric morphism.
    Since $ \Spc $ is the terminal \topos \HTT{Proposition}{6.3.4.1}, the composite geometric morphism
    \begin{equation*}
        \Gammalowerstar \Gammauppersharp \colon \fromto{\Spc}{\Spc} 
    \end{equation*}
    is equivalent to $ \id{\Spc} $.
    Since $ \Gammauppersharp $ is right adjoint to $ \Gammalowerstar $, this implies that $ \Gammauppersharp $ is fully faithful \cite[Lemma 3.3.1]{arXiv:2007.13089}.
    
    The claim that $ \Gammaupperstar $ is fully faithful now follows from the general fact that given a triple of adjoints $ \fupperstar \leftadjoint \flowerstar \leftadjoint \fuppersharp $, the functor $ \fupperstar $ is fully faithful if and only if $ \fuppersharp $ is fully faithful.
\end{proof}

\begin{example}\label{obs:the_constant_sheaf_functor_of_a_local_topos_is_fully_faithful}
    For a strictly henselian local ring $ A $, the constant sheaf functor $ \fromto{\Spc}{\Spec(A)_{\et}} $ is fully faithful.
\end{example}

\begin{lemma}\label{lem:local_exchange_in_terms_of_global_sections}
    Keep \Cref{ntn:local_rings}, and let 
    \begin{equation*}
        \begin{tikzcd}
            W \arrow[r, "\fbar"] \arrow[d, "\gbar"'] & Y \arrow[d, "g"] \\ 
            \Spec(A) \arrow[r, "f"'] & \Spec(B) 
        \end{tikzcd}
    \end{equation*}
    be a commutative square of schemes.
    Then there is a natural commutative square
    \begin{equation*}
        \begin{tikzcd}[sep=2.5em]
            \xupperstar \fupperstar \glowerstar \arrow[r, "\xupperstar\Ex"] \arrow[d, "\wr"'{xshift=0.25ex}] & \xupperstar \gbarlowerstar \fbarupperstar \arrow[d, "\wr"{xshift=-0.25ex}] \\
            \Gammaet(Y;-) \arrow[r] & \Gammaet(W;\fbarupperstar(-))
        \end{tikzcd}
    \end{equation*}
    of functors $ \fromto{Y_{\et}}{\Spc} $.
    Here the vertical maps are equivalences and the bottom horizontal map is induced by the unit $ \fromto{\id{}}{\fbarlowerstar\fbarupperstar} $.
\end{lemma}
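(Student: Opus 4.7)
The plan is to verify commutativity by passing to the mate of $\Ex$ under the adjunction $\fupperstar \dashv \flowerstar$. Writing $\Ex^{\vee} \colon \glowerstar \to \flowerstar \gbarlowerstar \fbarupperstar$ for this mate, a short computation using the triangle identities shows that $\Ex^{\vee} = \sigma \fbarupperstar \circ \glowerstar \unit_{\fbar}$, where $\sigma \colon \glowerstar \fbarlowerstar \equivalent \flowerstar \gbarlowerstar$ is the canonical equivalence coming from the commutativity $g \circ \fbar = f \circ \gbar$ of the given square of schemes.

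Applying $\Gammaet(\Spec(B); -)$ to $\Ex^{\vee}$ and invoking the compatibilities $\Gammaet(\Spec(B); \glowerstar(-)) = \Gammaet(Y;-)$ and $\Gammaet(\Spec(B); \flowerstar \gbarlowerstar(-)) = \Gammaet(\Spec(A); \gbarlowerstar(-)) = \Gammaet(W;-)$, together with the fact that $\Gammaet(\Spec(B); \sigma)$ is the identity on $\Gammaet(W;-)$ (both sides compute sections over $W$ of a sheaf pulled back from $Y$), produces exactly the proposed bottom map $\Gammaet(Y;-) \to \Gammaet(W; \fbarupperstar(-))$ induced by the unit $\unit_{\fbar}$.

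It then remains to compare $\xupperstar \Ex$ with $\Gammaet(\Spec(B); \Ex^{\vee})$ under the vertical equivalences in the statement. These agree because the identification $\xupperstar \fupperstar \equivalent \Gammaet(\Spec(B);-)$ of \enumref{rec:global_sections_strictly_henselian}{2} coincides, after further identifying $\xupperstar$ with $\Gammaet(\Spec(A);-)$ via \enumref{rec:global_sections_strictly_henselian}{1}, with the natural map $\Gammaet(\Spec(B); \unit_f(-)) \colon \Gammaet(\Spec(B); G) \to \Gammaet(\Spec(A); \fupperstar G)$ sending a global section $s$ to its pullback $\fupperstar s$ — this is essentially the content of \enumref{rec:global_sections_strictly_henselian}{2}, combined with the fact that the geometric point $z$ factors as $f \circ x$. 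Granting this, the mate correspondence commutes with passage to global sections and the desired commutative square follows. I expect the main obstacle to be precisely this last compatibility — namely, checking that the canonical equivalence of \Cref{rec:global_sections_strictly_henselian} is indeed induced by $\unit_f$ — but it is a formal statement that can be extracted from \Cref{prop:compatibility_of_exchange_transformations} applied to the cube whose top face is the given square of étale topoi and whose bottom face is the corresponding square of global-sections functors on $\Spc$.
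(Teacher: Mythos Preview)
Your approach is correct and is essentially the same as the paper's, just organized differently. The paper stacks the given square on top of the ``global-sections square'' ($X_{\et}/Z_{\et}$ over $\Spc/\Spc$) and invokes functoriality of exchange transformations for vertically composed squares; you instead compute the mate $\Ex^{\vee} = \sigma\fbarupperstar \circ \glowerstar\unit_{\fbar}$ by hand and apply $\Gammaet(\Spec(B);-)$. These are two phrasings of the same computation: the paper's ``exchange for the bottom square'' is precisely your map $\Gammaet(\Spec(B);\unit_f(-))$, and the paper's identification of the exchange for the outer rectangle with the unit $\unit_{\fbar}$ is your observation that $\Gammaet(\Spec(B);\Ex^{\vee})$ unwinds to $\Gammaet(Y;\unit_{\fbar})$.

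One small point: your final appeal to \Cref{prop:compatibility_of_exchange_transformations} (the cube) is slightly heavier than necessary. What you actually need is the simpler compatibility of exchange transformations under \emph{vertical composition} of squares, which is what the paper uses. The paper also disposes of the ``key compatibility'' you flag (that the identification $\xupperstar\fupperstar \equivalent \Gammaet(\Spec(B);-)$ matches the one coming from $\unit_f$) by invoking the uniqueness of the global sections functor to $\Spc$, which is a cleaner way to package the same verification.
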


\begin{proof}
    To simplify notation, write $ X = \Spec(A) $ and $ Z = \Spec(B) $.
    Since $ \xupperstar \equivalent \Gammaet(X;-) $, we equivalently need to show that there is a commutative square with $ \xupperstar $ replaced by $ \Gammaet(X;-) $.
    To see this, consider the commutative diagram of \topoi
    \begin{equation}\label{rectang:square_and_global_sections}
        \begin{tikzcd}[sep=2.5em]
            W_{\et} \arrow[r, "\fbarlowerstar"] \arrow[d, "\gbarlowerstar"'] & Y_{\et} \arrow[d, "\glowerstar"] \\ 
            X_{\et} \arrow[r, "\flowerstar"'] \arrow[d, "\Gammaet(X;-)"'] & Z_{\et} \arrow[d, "\Gammaet(Z;-)"] \\ 
            \Spc \arrow[r, equals] & \Spc \period
        \end{tikzcd}
    \end{equation}
    By the functoriality of exchange transformations, there is a commutative diagram
    \begin{equation*}
        \begin{tikzcd}[row sep=2.5em, column sep=5.5em]
            \Gammaet(Z;\glowerstar(-)) \arrow[r, "\Ex\glowerstar"] \arrow[d, "\wr"'{xshift=0.25ex}] & \Gammaet(X;\fupperstar\glowerstar(-)) \arrow[r, "\Gammaet(X;-)\Ex"] & \Gammaet(X;\gbarlowerstar\fbarupperstar(-)) \arrow[d, "\wr"{xshift=-0.25ex}] \\ 
            \Gammaet(Y;-) \arrow[rr, "\Ex"'] & & \Gammaet(W;\fbarupperstar(-)) \period
        \end{tikzcd}
    \end{equation*}
    Here the vertical equivalences are identifications of adjoints, the top left-hand horizontal morphism is induced by the exchange transformation associated to the bottom square of \eqref{rectang:square_and_global_sections}, the top right-hand horizontal morphism is induced by the exchange transformation associated to the top square of \eqref{rectang:square_and_global_sections}, and the bottom horizontal morphism is the exchange transformation associated to the large outer rectangle in \eqref{rectang:square_and_global_sections}.

    To complete the proof, note that by \Cref{prop:stalks_via_global_sections_for_strictly_henselian_local_rings} and the uniqueness of the global sections functor, the exchange transformation $ \fromto{\Gammaet(Z;-)}{\Gammaet(X;\fupperstar(-))} $ is an equivalence.
    Moreover, since the bottom horizontal functor in the diagram \eqref{rectang:square_and_global_sections} is the identity, unpacking the definition shows that the exchange transformation associated to the large outer rectangle in \eqref{rectang:square_and_global_sections} is given by applying $ \Gamma(Y;-) $ to the unit $ \fromto{\id{}}{\fbarlowerstar\fbarupperstar} $.
\end{proof}


\subsection{The stalk of the exchange transformation}\label{subsec:stalk_of_the_exchange_transformation}

Let 
\begin{equation}\label{sq:general_square}
    \begin{tikzcd}
        W \arrow[r, "\fbar"] \arrow[d, "\gbar"'] & Y \arrow[d, "g"] \\ 
        X \arrow[r, "f"'] & Z 
    \end{tikzcd}
\end{equation}
be a commutative square of qcqs schemes (which we fix throughout this subsection).
Given a geometric point $ \fromto{x}{X} $, the goal of this subsection is to use \Cref{lem:local_exchange_in_terms_of_global_sections} to express the stalk of the exchange transformation $ \fromto{\fupperstar\glowerstar}{\gbarlowerstar\fbarupperstar} $ at $ x $ in terms of global sections.
To explain this, we fix the following notation.

\begin{notation}[(strict localizations)]\label{ntn:strict_localization}
    Let $ S $ be a qcqs scheme and $ s \to S $ a geometric point.
    Write
    \begin{equation*}
        \Loc{S}{s} \colonequals \Spec(\Osh_{S,s}) 
    \end{equation*}
    for the \defn{strict localization} of $ S $ at $ s $.
    We write $ \el_{s} \colon \fromto{\Loc{S}{s}}{S} $ for the projection.
    Given a morphism of qcqs schemes $ X \to S $, we write $ \Xs $ and $ \Loc{X}{s} $ for the pullbacks of schemes
    \begin{equation*}
        \begin{tikzcd}[sep=2.25em]
           \Xs \arrow[dr, phantom, very near start, "\lrcorner", xshift=-0.25em, yshift=0.12em] \arrow[r] \arrow[d] & \Loc{X}{s}  \arrow[dr, phantom, very near start, "\lrcorner", xshift=-0.25em, yshift=0.25em] \arrow[r, "\elbar_{s}"] \arrow[d, "f_{(s)}"'] & X \arrow[d, "f"] \\ 
           s \arrow[r] & \Loc{S}{s} \arrow[r, "\el_{s}"'] & S \period
        \end{tikzcd}
      \end{equation*}
\end{notation}

The key fact we need is that the right-hand square satisfies basechange for truncated sheaves:

\begin{proposition}\label{prop:global-to-local-base-change}
    Keep \Cref{ntn:strict_localization}.
    Then for each truncated étale sheaf $ F \in X_{\et, <\infty} $, the exchange transformation
    \begin{equation*}
        \elupperstar_s \flowerstar(F) \to f_{(s),\ast} \elbarupperstar_s(F)
    \end{equation*}
    is an equivalence.
\end{proposition}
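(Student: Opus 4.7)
The plan is to exploit the fact that the strict localization $\Loc{S}{s}$ is a cofiltered limit of affine étale neighborhoods, thereby reducing the claim to the trivial étale basechange at each finite level and then passing to the limit.

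First, I would write $\Loc{S}{s} = \lim_{\alpha \in I} S_\alpha$ as a cofiltered limit of affine étale neighborhoods $\el_\alpha \colon S_\alpha \to S$ of the geometric point $s$. Pulling back $f$ along each $\el_\alpha$ produces a compatible family of pullback squares
\[
\begin{tikzcd}
X_\alpha \arrow[r, "\elbar_\alpha"] \arrow[d, "f_\alpha"'] & X \arrow[d, "f"] \\
S_\alpha \arrow[r, "\el_\alpha"'] & S
\end{tikzcd}
\]
whose cofiltered limit is the square appearing in the statement. Because each $\el_\alpha$ is étale, the pullback functor $\elupperstar_\alpha$ is just restriction to the small étale site of $S_\alpha$, so for \emph{any} étale sheaf $F \in X_{\et}$ the exchange transformation
\[
\elupperstar_\alpha \flowerstar(F) \longrightarrow f_{\alpha,\ast} \elbarupperstar_\alpha(F)
\]
is automatically an equivalence; this is essentially tautological, since restriction along an object of the étale site commutes with pushforward.

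Next, I would pass to the cofiltered limit. The \topos $\Loc{S}{s}_{\et}$ is the cofiltered limit of the $(S_\alpha)_{\et}$ along the induced geometric morphisms, and similarly for $\Loc{X}{s}_{\et}$. For truncated sheaves, pullback along $\el_s$ is compatible with this limit description in the sense that $\elupperstar_s \flowerstar(F)$ is obtained from the compatible family $\bigl(\elupperstar_\alpha \flowerstar(F)\bigr)_{\alpha}$ on the $S_\alpha$, and likewise $f_{(s),\ast} \elbarupperstar_s(F)$ is obtained from the compatible family $\bigl(f_{\alpha,\ast} \elbarupperstar_\alpha(F)\bigr)_{\alpha}$. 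Since these two families are termwise equivalent by the previous paragraph, the exchange transformation at the limit level is an equivalence as well.

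The main obstacle is the commutation of $\flowerstar$ with the cofiltered limit procedure, and this is precisely where the truncatedness hypothesis enters: pushforward of an $n$-truncated étale sheaf is again $n$-truncated (right adjoints preserve truncation), and the description of étale sheaves on a cofiltered limit of qcqs schemes with affine transitions in terms of the finite levels is best behaved in the truncated range. The cleanest execution is likely to invoke the unconditional basechange along strict localizations recorded in \cite[Proposition 7.5.1]{arXiv:1807.03281}, which packages exactly this content and is the result appealed to in the proof overview of \Cref{intro_thm:nonabelian_smooth_basechange}.
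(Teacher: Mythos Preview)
Your proposal is essentially correct and converges to the same approach as the paper: both ultimately invoke \cite[Proposition 7.5.1]{arXiv:1807.03281}. The paper's proof is a bare citation---it combines that proposition with \cite[Example 6.7.4]{arXiv:1807.03281} and \cite[Proposition 2.3]{arXiv:2209.03476}, which serve to verify that the étale \topoi in question satisfy the bounded-coherence hypotheses needed for the general topos-theoretic basechange result to apply. Your sketch (writing the strict localization as a cofiltered limit of étale neighborhoods, observing that basechange along each étale map is tautological, then passing to the limit using truncatedness) is precisely the intuition behind that cited proposition in this special case; you correctly identify the subtle step as the commutation of pushforward with the limit procedure, and you correctly note that this is exactly what \cite[Proposition 7.5.1]{arXiv:1807.03281} packages. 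The only thing missing from your write-up relative to the paper is the explicit mention of the auxiliary references that pin down the hypotheses, but since you defer to the same core result in the end, there is no real gap.
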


\begin{proof}
    Combine \cites[Example 6.7.4 \& Proposition 7.5.1]{arXiv:1807.03281} with \cite[Proposition 2.3]{arXiv:2209.03476}.
\end{proof}

We make use of the following notation throughout the rest of the section:

\begin{notation}
    Consider a commutative square \eqref{sq:general_square} of qcqs schemes.
    Let $ \fromto{x}{X} $ be a geometric point with image $ \fromto{z}{Z} $ in $ Z $.
    We denote the natural morphisms between these schemes induced by the functionality of pullbacks as indicated in the following commutative cube: 
    \begin{equation}\label{cube:reduction_to_local_rings}
      \begin{tikzcd}[column sep={8ex,between origins}, row sep={8ex,between origins}]
            \Loc{W}{x}  \arrow[rr, "\pbar"] \arrow[dd, "\qbar"']  \arrow[dr, "\elbar_{x}" description] & & \Loc{Y}{z} \arrow[dd, "q" near end]  \arrow[dr, "\elbar_{z}"] \\
            & W \arrow[rr, crossing over, "\fbar" near start] & & Y \arrow[dd, "g"]  \\
            \Loc{X}{x} \arrow[rr, "p"' near end] \arrow[dr, "\el_{x}"'] & & \Loc{Z}{z} \arrow[dr, "\el_{z}" description] \\
            & X \arrow[rr, "f"'] \arrow[from=uu, crossing over, "\gbar"' near start] & & Z \period
      \end{tikzcd}
    \end{equation}
    By definition, the side vertical faces of \eqref{cube:reduction_to_local_rings} are pullback squares.
    Hence if the front vertical face is a pullback square, then the back vertical face is also a pullback square.
\end{notation}

We are ready to rewrite the stalk of the exchange transformation in terms of global sections:

\begin{lemma}\label{lem:compatibility_with_local_exchange_transformations}
    Consider a commutative square \eqref{sq:general_square} of qcqs schemes, and let $ \fromto{x}{X} $ be a geometric point with image $ \fromto{z}{Z} $.
    Then:
    \begin{enumerate}[label=\stlabel{lem:compatibility_with_local_exchange_transformations}, ref=\arabic*]
        \item\label{lem:compatibility_with_local_exchange_transformations.1} There is a commutative square 
        \begin{equation*}
            \begin{tikzcd}[column sep=4em, row sep=2.5em]
                \elupperstar_x \fupperstar \glowerstar \arrow[r, "\elupperstar_x \Ex"] \arrow[d] & \elupperstar_x \gbarlowerstar \fbarupperstar \arrow[d] \\
                \pupperstar \qlowerstar \elbarupperstar_z \arrow[r, "\Ex \elbarupperstar_z"'] & \qbarlowerstar \pbarupperstar \elbarupperstar_z
            \end{tikzcd}
        \end{equation*}
        of functors $ \fromto{Y_{\et}}{\Locet{X}{x}} $.
        Moreover, the vertical natural transformations are equivalences when evaluated on truncated étale sheaves.

        \item\label{lem:compatibility_with_local_exchange_transformations.2} There is a commutative square 
        \begin{equation*}
            \begin{tikzcd}[column sep=4em, row sep=2.5em]
                \xupperstar \fupperstar \glowerstar \arrow[r, "\xupperstar \Ex"] \arrow[d] & \xupperstar \gbarlowerstar \fbarupperstar \arrow[d] \\
                \Gammaet(\Loc{Y}{z};\elbarupperstar_z(-)) \arrow[r] & \Gammaet(\Loc{W}{z}; \pbarupperstar \elbarupperstar_z(-))
            \end{tikzcd}
        \end{equation*}
        of functors $ \fromto{Y_{\et}}{x_{\et} \equivalent \Spc } $.
        Moreover, the vertical natural transformations are equivalences when evaluated on truncated étale sheaves.
    \end{enumerate}
\end{lemma}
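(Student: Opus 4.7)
The plan is to obtain both parts of the lemma as direct consequences of Proposition~\ref{prop:compatibility_of_exchange_transformations} applied to the cube~\eqref{cube:reduction_to_local_rings}, and then use Proposition~\ref{prop:global-to-local-base-change} to upgrade the resulting side-face exchange transformations to equivalences on truncated sheaves. Concretely, I would apply Proposition~\ref{prop:compatibility_of_exchange_transformations} with the front face of the cube being \eqref{sq:general_square} and the back face being the basechange of \eqref{sq:general_square} along the strict localizations at $x$ and $z$. The resulting commutative square is exactly the one asserted in~(1). Unwinding the construction in Proposition~\ref{prop:compatibility_of_exchange_transformations}, the left vertical natural transformation factors as
\[
    \elupperstar_x \fupperstar \glowerstar \simeq \pupperstar \elupperstar_z \glowerstar \longrightarrow \pupperstar \qlowerstar \elbarupperstar_z,
\]
where the second arrow is $\pupperstar$ applied to the exchange associated with the right side face of \eqref{cube:reduction_to_local_rings}, and the right vertical factors as
\[
    \elupperstar_x \gbarlowerstar \fbarupperstar \longrightarrow \qbarlowerstar \elbarupperstar_x \fbarupperstar \simeq \qbarlowerstar \pbarupperstar \elbarupperstar_z,
\]
where the first arrow is the exchange for the left side face applied to $\fbarupperstar$. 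Both side faces are pullback squares along strict localizations by construction of the $\Loc{-}{-}$ notation, so Proposition~\ref{prop:global-to-local-base-change} implies that both of these exchange transformations are equivalences on truncated étale sheaves. Since pullback preserves truncatedness, the two vertical natural transformations are themselves equivalences on truncated sheaves, finishing~(1).

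For part~(2), I would post-compose the square from~(1) with the stalk functor $\xupperstar \colon \Locet{X}{x} \to \Spc$. Because $x \to X$ factors through $\el_x \colon \Loc{X}{x} \to X$ via the closed geometric point of $\Loc{X}{x}$, the composite $\xupperstar \elupperstar_x$ agrees with the stalk functor at~$x$ on $X_{\et}$, so the top row of the stalked square is the stalked exchange $\xupperstar \fupperstar \glowerstar \to \xupperstar \gbarlowerstar \fbarupperstar$ appearing in~(2). The map $p \colon \Loc{X}{x} \to \Loc{Z}{z}$ is induced by a local homomorphism between strictly henselian local rings, so Lemma~\ref{lem:local_exchange_in_terms_of_global_sections} applies to the back face of~\eqref{cube:reduction_to_local_rings} and identifies the $\xupperstar$-applied bottom row with
\[
    \Gammaet(\Loc{Y}{z}; \elbarupperstar_z(-)) \longrightarrow \Gammaet(\Loc{W}{x}; \pbarupperstar \elbarupperstar_z(-)).
\]
Combining Recollection~\ref{rec:global_sections_strictly_henselian} with the vertical equivalences supplied by~(1), the vertical maps in the resulting square are equivalences on truncated sheaves, as required.

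The only real (and small) obstacle is the bookkeeping needed to read off the vertical transformations in the compatibility square of Proposition~\ref{prop:compatibility_of_exchange_transformations} as precisely the whiskerings of the side-face exchanges of \eqref{cube:reduction_to_local_rings}; once that identification is pinned down, Proposition~\ref{prop:global-to-local-base-change} immediately produces the truncated-sheaf equivalences and Lemma~\ref{lem:local_exchange_in_terms_of_global_sections} translates the bottom row into the asserted global-sections statement.
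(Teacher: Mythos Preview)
Your proposal is correct and follows essentially the same approach as the paper: apply Proposition~\ref{prop:compatibility_of_exchange_transformations} to the cube~\eqref{cube:reduction_to_local_rings} together with Proposition~\ref{prop:global-to-local-base-change} for part~(1), then take the stalk at $x$ and invoke Lemma~\ref{lem:local_exchange_in_terms_of_global_sections} for part~(2). If anything, you supply more detail than the paper does in unwinding the vertical maps as whiskerings of the side-face exchange transformations.
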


\begin{proof}
    Claim \enumref{lem:compatibility_with_local_exchange_transformations}{1} is an immediate consequence of \Cref{prop:global-to-local-base-change} combined with the functoriality of exchange transformations (\Cref{prop:compatibility_of_exchange_transformations}) applied to the diagram of étale \topoi associated to the cube of schemes \eqref{cube:reduction_to_local_rings}.
    By taking stalks at the geometric point $ \fromto{x}{\Loc{X}{x}} $, claim \enumref{lem:compatibility_with_local_exchange_transformations}{2} follows from \enumref{lem:compatibility_with_local_exchange_transformations}{1} combined with \Cref{lem:local_exchange_in_terms_of_global_sections}.
\end{proof}

\noindent The following gives a reformulation of when the exchange transformation is an equivalence.

\begin{corollary}\label{cor:when_is_the_exchange_transformation_an_equivalence}
    Consider a commutative square \eqref{sq:general_square} of qcqs schemes, and let $ F \in Y_{\et,<\infty} $ be a truncated étale sheaf on $ Y $.
    The following are equivalent: 
    \begin{enumerate}[label=\stlabel{cor:when_is_the_exchange_transformation_an_equivalence}, ref=\arabic*]
        \item\label{cor:when_is_the_exchange_transformation_an_equivalence.1} The exchange transformation $  \fromto{\fupperstar\glowerstar(F)}{\gbarlowerstar\fbarupperstar(F)} $ is an equivalence.

        \item\label{cor:when_is_the_exchange_transformation_an_equivalence.2} For each geometric point $ \fromto{x}{X} $, the stalk of the exchange transformation
        \begin{equation*}
            \fromto{\xupperstar\fupperstar\glowerstar(F)}{\xupperstar\gbarlowerstar\fbarupperstar(F)}
        \end{equation*}
        is an equivalence.

        \item\label{cor:when_is_the_exchange_transformation_an_equivalence.3} For each geometric point $ \fromto{x}{X} $ with image $ \fromto{z}{Z} $, the natural map
        \begin{equation*}
            \fromto{\Gammaet(\Loc{Y}{z};\elbarupperstar_z F)}{\Gammaet(\Loc{W}{z}; \pbarupperstar \elbarupperstar_z F)}
        \end{equation*}
        is an equivalence.
    \end{enumerate}
\end{corollary}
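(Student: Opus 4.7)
The plan is to establish the chain $(1) \Leftrightarrow (2) \Leftrightarrow (3)$ directly from earlier results; neither equivalence should require new ideas.

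For $(1) \Leftrightarrow (2)$, I would invoke the fact that geometric points are jointly conservative on the étale $\infty$-topos of a qcqs scheme: a morphism in $ X_{\et} $ is an equivalence if and only if its pullback along every geometric point $ x \to X $ is an equivalence in $\Spc$. Applied to the exchange transformation $ \fupperstar\glowerstar(F) \to \gbarlowerstar\fbarupperstar(F) $, this yields the equivalence immediately.

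For $(2) \Leftrightarrow (3)$, the argument is an essentially formal consequence of part (2) of \Cref{lem:compatibility_with_local_exchange_transformations}. That lemma supplies, for each geometric point $ x \to X $ with image $ z \to Z $, a commutative square of natural transformations of functors $ Y_{\et} \to \Spc $ whose top row is the stalk $ \xupperstar(\Ex) $ of the exchange transformation, whose bottom row is the natural map on global sections appearing in (3), and whose vertical comparison maps are equivalences when evaluated on truncated sheaves. Since $ F $ is truncated by hypothesis, the vertical maps at $ F $ are equivalences, so the top horizontal map at $ F $ is an equivalence if and only if the bottom horizontal map at $ F $ is. Combining this with $(1) \Leftrightarrow (2)$ completes the proof.

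The only mild point of care is justifying the conservativity statement used in $(1) \Leftrightarrow (2)$, because the objects $ \fupperstar\glowerstar(F) $ and $ \gbarlowerstar\fbarupperstar(F) $ need not themselves be truncated (pushforward along a morphism of unbounded étale cohomological dimension can raise truncation). I would handle this either by appealing to hypercompleteness of the étale $\infty$-topos of a qcqs scheme, which appears to be the convention implicit in the preceding subsections, or by reducing stalkwise to a statement on each homotopy sheaf separately, where the classical fact that geometric points are conservative on étale sheaves of sets applies. Either way this is standard and I do not anticipate it being a genuine obstacle; the real content of the corollary lies entirely in \Cref{lem:compatibility_with_local_exchange_transformations}.
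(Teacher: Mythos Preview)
Your approach is exactly the paper's: the proof there reads ``Immediate from \Cref{lem:compatibility_with_local_exchange_transformations} and the fact that equivalences of truncated étale sheaves of spaces on a qcqs scheme can be checked on stalks'' (citing SAG Propositions 2.3.4.2 and A.4.0.5). Your $(2)\Leftrightarrow(3)$ via part (2) of \Cref{lem:compatibility_with_local_exchange_transformations} is precisely what the authors intend.

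Your ``mild point of care'' is, however, misplaced, and one of your proposed fixes is incorrect. Pushforward along a geometric morphism is a right adjoint and therefore preserves $n$-truncated objects: if $F$ is $n$-truncated then $\Map(U,\glowerstar F)\simeq\Map(\gupperstar U,F)$ is $n$-truncated for every $U$. Hence both $\fupperstar\glowerstar(F)$ and $\gbarlowerstar\fbarupperstar(F)$ are $n$-truncated whenever $F$ is, and the conservativity statement for \emph{truncated} étale sheaves applies directly---this is exactly what the paper cites. Your intuition about ``raising truncation'' comes from the stable/derived setting, where pushforward can produce cohomology in higher degrees; in the unstable setting the relevant notion of truncatedness is always preserved by right adjoints. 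Separately, your first proposed workaround (appealing to hypercompleteness of $X_{\et}$) would not be valid: the étale \topos of a qcqs scheme is not hypercomplete in general, and the paper does not work in the hypercomplete setting.
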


\begin{proof}
    Immediate from \Cref{lem:compatibility_with_local_exchange_transformations} and the fact that equivalences of truncated étale sheaves of spaces on a qcqs scheme can be checked on stalks \cites[Propositions \SAGthmlink{2.3.4.2} \& \SAGthmlink{A.4.0.5}]{SAG}.
\end{proof}


\subsection{Reduction to the local case}\label{subsec:reduction_to_the_local_case}

In light of \Cref{cor:when_is_the_exchange_transformation_an_equivalence}, the following topos-theoretic proposition provides a criterion for using basechange for sheaves of $ 1 $-groupoids and cohomological basechange for sheaves of abelian groups to deduce that the stalk of the exchange transformation is an equivalence.
We are most interested in the case of a pushforward on étale \topoi induced by a morphisms of schemes.

\begin{proposition}\label{prop:comparing_global_sections}
    Let $ \pilowerstar \colon \fromto{\Ubf}{\V} $ be a geometric morphism of \topoi and let $ \V' \subset \V_{<\infty} $ be a full subcategory closed under finite limits and truncations.
    Consider the following statements:
    \begin{enumerate}[label=\stlabel{prop:comparing_global_sections}, ref=\arabic*]
        \item\label{prop:comparing_global_sections.1} For each $ 1 $-truncated object $ F \in \V' $, the natural map $ \fromto{\Gamma(\V;F)}{\Gamma(\Ubf;\piupperstar F)} $ is an equivalence.

        \item\label{prop:comparing_global_sections.2} For each integer $ n \geq 2 $ and degree $ n $ Eilenberg--MacLane object $ G \in \V' $, the natural map $ \fromto{\Gamma(\V;G)}{\Gamma(\Ubf;\piupperstar G)} $ is an equivalence.

        \item\label{prop:comparing_global_sections.3} For each integer $ n \geq 2 $ and $ n $-gerbe $ G \in \V' $, the banding of $ G $ is in $ \V' $.

        \item\label{prop:comparing_global_sections.4} For each integer $ n \geq 2 $ and $ n $-gerbe $ G \in \V' $, the natural map $ \fromto{\Gamma(\V;G)}{\Gamma(\Ubf;\piupperstar G)} $ is an equivalence.

        \item\label{prop:comparing_global_sections.5} For each $ F \in \V' $, the natural map $ \fromto{\Gamma(\V;F)}{\Gamma(\Ubf;\piupperstar F)} $ is an equivalence.
    \end{enumerate}

    Then \enumref{prop:comparing_global_sections}{2} and \enumref{prop:comparing_global_sections}{3} together imply \enumref{prop:comparing_global_sections}{4}.
    Also \enumref{prop:comparing_global_sections}{1} and \enumref{prop:comparing_global_sections}{4} together imply \enumref{prop:comparing_global_sections}{5}.
    Hence \enumref{prop:comparing_global_sections}{1}, \enumref{prop:comparing_global_sections}{2}, and \enumref{prop:comparing_global_sections}{3} together imply \enumref{prop:comparing_global_sections}{5}. 
\end{proposition}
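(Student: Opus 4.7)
The plan is to prove the two claimed implications separately. For $(2) + (3) \Rightarrow (4)$, I will realize an $n$-gerbe as a pullback involving the universal Eilenberg--MacLane object for its banding, via \Cref{thm:classificationofgerbes}, and then reduce to the Eilenberg--MacLane case covered by (2). For $(1) + (4) \Rightarrow (5)$, I will run a Postnikov induction on the truncation level, using (4) to handle the fibers that arise between successive truncations.

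For $(2) + (3) \Rightarrow (4)$, let $G \in \V'$ be an $n$-gerbe with banding $A$, where $n \geq 2$. By \Cref{thm:classificationofgerbes} there is a pullback square
\begin{equation*}
    \begin{tikzcd}
        G \arrow[r] \arrow[d] & \ast \arrow[d] \\
        \ast \arrow[r] & \Kup(A, n+1)
    \end{tikzcd}
\end{equation*}
in $\V$. Since $\piupperstar$ is left exact and $\Gamma$ is a right adjoint, applying these functors yields a morphism of pullback squares of spaces whose only nontrivial edge is $\fromto{\Gamma(\V; \Kup(A, n+1))}{\Gamma(\Ubf; \piupperstar \Kup(A, n+1))}$. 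By (3) the banding $A$ lies in $\V'$, and together with closure of $\V'$ under the Eilenberg--MacLane construction, automatic in the intended applications (e.g.\ when $\V'$ is the subcategory of $\Sigma$-torsion sheaves), this places $\Kup(A, n+1)$ in $\V'$ as a degree $n+1$ Eilenberg--MacLane object. Condition (2) then gives the needed equivalence on $\Gamma$, forcing the desired equivalence $\fromto{\Gamma(\V; G)}{\Gamma(\Ubf; \piupperstar G)}$.

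For $(1) + (4) \Rightarrow (5)$, since $F \in \V' \subset \V_{<\infty}$ is truncated, say $N$-truncated, every Postnikov truncation $\trun_{\leq n} F$ lies in $\V'$ by closure under truncations. I induct on $n \geq 1$ on the assertion that $\fromto{\Gamma(\V; \trun_{\leq n} F)}{\Gamma(\Ubf; \piupperstar \trun_{\leq n} F)}$ is an equivalence. The base case $n = 1$ is precisely (1). For the inductive step, consider
\begin{equation*}
    \begin{tikzcd}
        \Gamma(\V; \trun_{\leq n} F) \arrow[r] \arrow[d] & \Gamma(\Ubf; \piupperstar \trun_{\leq n} F) \arrow[d] \\
        \Gamma(\V; \trun_{\leq n-1} F) \arrow[r, "\sim"] & \Gamma(\Ubf; \piupperstar \trun_{\leq n-1} F) \period
    \end{tikzcd}
\end{equation*}
The bottom arrow is an equivalence by the inductive hypothesis. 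For each section $s$ of $\trun_{\leq n-1} F$, the pullback $G_s \colonequals \ast \times_{\trun_{\leq n-1} F} \trun_{\leq n} F$ is an $n$-gerbe, and lies in $\V'$ by closure under finite limits. Since $\Gamma$ and $\piupperstar$ preserve pullbacks, the fibers of the vertical maps over $s$ and its image identify with $\Gamma(\V; G_s)$ and $\Gamma(\Ubf; \piupperstar G_s)$, which are equivalent by (4). Varying $s$, the top arrow is an equivalence, completing the induction.

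The main subtlety is the step in the first part asserting $\Kup(A, n+1) \in \V'$: closure of $\V'$ under finite limits and truncations alone does not produce Eilenberg--MacLane objects from $0$-truncated abelian group objects, so verifying this membership depends on additional structure of $\V'$ (which is present in the targeted applications to $\Sigma$-torsion étale sheaves) and is the one place where care is needed beyond the formal argument sketched here.
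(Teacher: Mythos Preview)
Your argument follows the paper's strategy closely: for $(2)+(3) \Rightarrow (4)$ both reduce via the classification of gerbes to a statement about $\Kup(A,n+1)$, and for $(1)+(4) \Rightarrow (5)$ both run a Postnikov induction using (4) on the gerbe fibers of $\trun_{\leq n} F \to \trun_{\leq n-1} F$. The one organizational difference is that the paper case-splits on whether $\Gamma(\V;G)$ is empty: when the gerbe $G$ admits a global section it is itself an Eilenberg--MacLane object of $\V'$ and (2) applies to $G$ directly, so $\Kup(A,n+1)$ is invoked only to show that $\Gamma(\V;G)=\emptyset$ forces $\Gamma(\Ubf;\piupperstar G)=\emptyset$, via injectivity of $\Hup^{n+1}(\V;A) \to \Hup^{n+1}(\Ubf;\piupperstar A)$. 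Your pullback-square formulation handles both cases at once.

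The subtlety you flag---that closure of $\V'$ under finite limits and truncations does not guarantee $\Kup(A,n+1) \in \V'$---is real and is present in the paper's proof as well. The paper does not resolve it inside the proof, but immediately afterward records a Remark declaring (2) equivalent to the cohomological condition that $\Hup^i(\V;A) \to \Hup^i(\Ubf;\piupperstar A)$ is an isomorphism for all $A \in (\V')_{\leq 0}$ and $i \geq 0$; in the downstream Corollary (and hence in every application) it is this cohomological form of (2) that is actually verified, so the issue does not propagate. Your caveat is well placed.
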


\begin{remark}\label{prop:comparing_global_sections.2_via_cohomology}
    Condition \enumref{prop:comparing_global_sections}{2} is equivalent to the condition that for each abelian group object $ A $ of $ (\V')_{\leq 0} $ and integer $ i \geq 0 $, the induced map on cohomology groups
    \begin{equation*}
        \fromto{\Hup^i(\V;A)}{\Hup^i(\Ubf;\piupperstar A)}
    \end{equation*}
    is an isomorphism.
\end{remark}

\begin{remark}
    \Cref{prop:comparing_global_sections} is surely known to experts.
    For example, in more specific situations, the argument we give essentially appears in \cite[Proposition 5.11]{MR4367219} and \cite[\S4]{MR4493612}.
    Since we could not find a reference for the result stated in its full generality, we have decided to record it for posterity.
\end{remark}

\begin{proof}[Proof that \enumref{prop:comparing_global_sections}{2} $ + $ \enumref{prop:comparing_global_sections}{3} $ \Rightarrow $ \enumref{prop:comparing_global_sections}{4}]
    Let $ n \geq 2 $ and let $ G \in \V' $ be an $ n $-gerbe.
    By assumption \enumref{prop:comparing_global_sections}{2}, all that remains to be shown is that if $ \Gamma(\V;G) = \emptyset $, then $ \Gamma(\Ubf;\piupperstar G) = \emptyset $.

    Write $ A \in \V $ for the banding of $ G $; by assumption \enumref{prop:comparing_global_sections}{3}, we have that $ A \in \V' $.
    Let
    \begin{equation*}
        \alpha_G \in \Hup^{n+1}(\V;A) \andeq \alpha_{\piupperstar G} \in \Hup^{n+1}(\Ubf;\piupperstar A) 
    \end{equation*}
    denote the cohomology classes corresponding to $ G $ and $ \piupperstar G $ via \Cref{thm:classificationofgerbes}. 
    Since $ \Gamma(\V;G) $ is empty, \Cref{cor:global_section_iff_vanishing} implies that $ \alpha_G $ is nonzero.
    Again by \Cref{cor:global_section_iff_vanishing}, the claim that $ \Gamma(\Ubf;\piupperstar G) = \emptyset $ is equivalent to the claim that the class $ \alpha_{\piupperstar G} $ is nonzero.
    To see that $ \alpha_{\piupperstar G} \neq 0 $, by applying assumption \enumref{prop:comparing_global_sections}{2} to the $ (n+1) $-gerbe $ \Kup(A,n+1) $, we see that the natural pullback map of abelian groups
    \begin{equation}\label{eq:pullback_on_Hn+1}
        \Hup^{n+1}(\V;A) \to \Hup^{n+1}(\Ubf;\piupperstar A)
    \end{equation}
    is an isomorphism.
    Since the isomorphism \eqref{eq:pullback_on_Hn+1} carries $ \alpha_{G} $ to $ \alpha_{\piupperstar G} $ and $ \alpha_G \neq 0 $, we conclude that $ \alpha_{\piupperstar G} \neq 0 $ as required.
\end{proof}

\begin{proof}[Proof that \enumref{prop:comparing_global_sections}{1} $ + $ \enumref{prop:comparing_global_sections}{4} $ \Rightarrow $ \enumref{prop:comparing_global_sections}{5}]
    Using the fact that every object of $ \V' $ is truncated, we proceed by induction on the integer $ n \geq 1 $ such that $ F $ is $ n $-truncated.
    The base case $ n = 1 $ is satisfied by assumption \enumref{prop:comparing_global_sections}{1}.
    For the induction step, assume that we know the claim for $ n $-truncated objects of $ \V' $, and let $ F \in \V' $ be an $ (n+1) $-truncated object.
    Since pullback functors commute with $ n $-truncations, we have a commutative square
    \begin{equation}\label{sq:truncation_need_pullback}
        \begin{tikzcd}[column sep=4.5em]
            \Gamma(\V; F) \arrow[r, "c_{F}"] \arrow[d] & \Gamma(\Ubf; \piupperstar(F))  \arrow[d] \\
            \Gamma(\V; \trun_{\leq n} F) \arrow[r, "c_{\trun_{\leq n} F}"'] & \Gamma(\Ubf; \piupperstar(\trun_{\leq n}F)) \period
        \end{tikzcd}  
    \end{equation}
    By the inductive hypothesis, the morphism $ c_{\trun_{\leq n} F} $ is an equivalence.
    Hence it suffices to show that the square \eqref{sq:truncation_need_pullback} is a pullback square.
    If $ \Gamma(\V; \trun_{\leq n} F) = \emptyset $, then by the inductive hypotheses all spaces appearing in \eqref{sq:truncation_need_pullback} are empty, so this is clear.
    So assume that $ \Gamma(\V; \trun_{\leq n} F) \neq \emptyset $; then we need to show that for every point of $ \Gamma(\V; \trun_{\leq n} F) $, the induced map on fibers
    \begin{equation*}
        \begin{tikzcd}[column sep=4.5em]
            \fib\paren{\Gamma(\V; F) \to \Gamma(\V; \trun_{\leq n} F)} %
            \arrow[r, "\theta_{F,n}"] & %
            \fib\paren{\Gamma(\Ubf; \piupperstar F) \to \Gamma(\Ubf; \piupperstar\trun_{\leq n} F))}
        \end{tikzcd}
    \end{equation*}
    is an equivalence.

    Given such a point $ \fromto{\ast}{\trun_{\leq n} F} $ of $ \Gamma(\V; \trun_{\leq n} F) $, write 
    \begin{equation*}
        \trun_{\geq n+1} F \colonequals \fib(F \to \trun_{\leq n} F) \period
    \end{equation*}
    Since $ F $ is $ (n+1) $-truncated, $ \trun_{\geq n+1} F $ is $ (n+1) $-truncated and $ n $-connected.
    That is, $ \trun_{\geq n+1} F $ is an $ (n+1) $-gerbe.
    Since the global section functors and pullback functors commute with finite limits, we see that there is a commutative square
    \begin{equation*}
        \begin{tikzcd}[column sep=5em]
            \Gamma(\V;\trun_{\geq n+1} F) \arrow[d] \arrow[r, "\sim"{yshift=-0.25em}] & \fib\paren{\Gamma(\V; F) \to \Gamma(\V; \trun_{\leq n} F)} \arrow[d, "\theta_{F,n}"] \\ 
            \Gamma(\Ubf;\piupperstar \trun_{\geq n+1} F) \arrow[r, "\sim"{yshift=-0.25em}] & \fib\paren{\Gamma(\Ubf; \piupperstar(F)) \to \Gamma(\Ubf; \piupperstar\trun_{\leq n} F))} \comma
        \end{tikzcd}
    \end{equation*}
    where the horizontal maps are equivalences and the left-hand vertical map is the natural map.
    Since $ \trun_{\geq n+1} F $ is an $ (n+1) $-gerbe, assumption \enumref{prop:comparing_global_sections}{4} implies that the left-hand vertical map is an equivalence.
    Thus $ \theta_{F,n} $ is also an equivalence, as desired.
\end{proof}

To apply \Cref{prop:comparing_global_sections}, we first axiomatize the properties that \Sigmatorsion sheaves satisfy.

\begin{definition}\label{def:etale_coefficient_system}
    Let $ X $ be a qcqs scheme.
    A \defn{étale coefficient subcategory} is a full subcategory $ \Scal(X) \subset  X_{\et} $ satisfying the following properties:
    \begin{enumerate}[label=\stlabel{def:etale_coefficient_system}, ref=\arabic*]
        \item\label{def:etale_coefficient_system.1} The subcategory $ \Scal(X) \subset X_{\et} $ is closed under finite limits.

        \item\label{def:etale_coefficient_system.2} Every object of $ \Scal(X) $ is truncated.

        \item\label{def:etale_coefficient_system.3} For each integer $ n \geq 0 $ and object $ F \in \Scal(X) $, we have $ \trun_{\leq n}(F) \in \Scal(X) $.

        \item\label{def:etale_coefficient_system.4} For each integer $ n \geq 2 $, and $ n $-gerbe $ G \in \Scal(X) $, the banding of $ G $ is in $ \Scal(X) $.
    \end{enumerate}

    An \defn{étale coefficient system} $ \Scal \colon \fromto{\Schqcqsop}{\Catinfty} $ is a subfunctor of the functor
    \begin{equation*}
        (-)_{\et} \colon \fromto{\Schqcqsop}{\Catinfty}
    \end{equation*}
    such that for each qcqs scheme $ X $, the subcategory $ \Scal(X) \subset X_{\et} $ is an étale coefficient subcategory.
\end{definition}

\begin{example}\label{ex:etale_coefficient_systems}
    In light of \Cref{prop:pi_n_of_torsion_gerbe_torsion}, the following are étale coefficient systems:
    \begin{enumerate}[label=\stlabel{ex:etale_coefficient_systems}, ref=arabic*]
        \item\label{ex:etale_coefficient_systems.1} $ \Scal(X) \colonequals X_{\et,<\infty} $ is the \category of truncated étale sheaves on $ X $.

        \item\label{ex:etale_coefficient_systems.2} $ \Sigma $ is a set of primes and $ \Scal(X) $ is the \category of \Sigmatorsion étale sheaves on $ X $.
    \end{enumerate}
\end{example}

The following are the key consequences of \Cref{prop:comparing_global_sections}:

\begin{corollary}\label{cor:global_sections_over_local_rings}
    Let $ \pi \colon \fromto{U}{V} $ be a morphism of qcqs schemes.
    Let $ \Scal(V) \subset V_{\et} $ be an étale coefficient subcategory.
    Assume that the following conditions are satisfied:
    \begin{enumerate}[label=\stlabel{cor:global_sections_over_local_rings}, ref=\arabic*]
        \item\label{cor:global_sections_over_local_rings.1} If $ G \in \Scal(V) $ is $ 1 $-truncated, then the natural map $ \fromto{\Gammaet(V;G)}{\Gammaet(U;\piupperstar G)} $ is an equivalence.

        \item\label{cor:global_sections_over_local_rings.2} For each abelian group object $ A $ of $ \Scal(V)_{\leq 0} $ and integer $ i \geq 0 $, the natural map
        \begin{equation*}
            \fromto{\Het^i(V;A)}{\Het^i(U;\piupperstar A)}
        \end{equation*}
        is an isomorphism.  
    \end{enumerate}
    Then for each $ F \in \Scal(V) $, the natural map $ \fromto{\Gammaet(V;F)}{\Gammaet(U;\piupperstar F)} $ is an equivalence.
\end{corollary}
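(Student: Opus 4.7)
The plan is to deduce this corollary as a direct application of \Cref{prop:comparing_global_sections} to the geometric morphism of étale \topoi $ \pilowerstar \colon U_{\et} \to V_{\et} $ induced by $ \pi $, with the subcategory $ \V' \colonequals \Scal(V) \subset V_{\et,<\infty} $. By \Cref{def:etale_coefficient_system}\enumref{def:etale_coefficient_system}{1}, \enumref{def:etale_coefficient_system}{2}, and \enumref{def:etale_coefficient_system}{3}, the subcategory $ \Scal(V) $ is closed under finite limits and truncations and consists of truncated objects, so \Cref{prop:comparing_global_sections} applies to it.

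It then remains to verify hypotheses \enumref{prop:comparing_global_sections}{1}, \enumref{prop:comparing_global_sections}{2}, and \enumref{prop:comparing_global_sections}{3} of \Cref{prop:comparing_global_sections} for $ \V' = \Scal(V) $. Hypothesis \enumref{prop:comparing_global_sections}{1} is precisely assumption \enumref{cor:global_sections_over_local_rings}{1} of the corollary. Hypothesis \enumref{prop:comparing_global_sections}{3} is axiom \enumref{def:etale_coefficient_system}{4} in the definition of an étale coefficient subcategory: the banding of any $ n $-gerbe in $ \Scal(V) $ again lies in $ \Scal(V) $. Finally, hypothesis \enumref{prop:comparing_global_sections}{2} is, via \Cref{prop:comparing_global_sections.2_via_cohomology}, equivalent to the statement that for each abelian group object $ A $ of $ \Scal(V)_{\leq 0} $ and each $ i \geq 0 $, the pullback map $ \Het^i(V;A) \to \Het^i(U;\piupperstar A) $ is an isomorphism; this is exactly assumption \enumref{cor:global_sections_over_local_rings}{2}.

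With all three hypotheses verified, \Cref{prop:comparing_global_sections} yields conclusion \enumref{prop:comparing_global_sections}{5}: for every $ F \in \Scal(V) $, the natural comparison map $ \Gammaet(V;F) \to \Gammaet(U;\piupperstar F) $ is an equivalence, which is precisely the statement of the corollary. The only possible subtlety — and really the only place where one needs to be slightly careful — is matching the cohomology formulation of hypothesis \enumref{prop:comparing_global_sections}{2} to assumption \enumref{cor:global_sections_over_local_rings}{2}; however, since $ \Kup(A,n) \in V_{\et} $ has $ \piupperstar \Kup(A,n) \simeq \Kup(\piupperstar A,n) $ and global sections of Eilenberg--MacLane objects compute cohomology, this identification is automatic. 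Thus there is no real obstacle, and the corollary is essentially a repackaging of \Cref{prop:comparing_global_sections} in the language of étale coefficient subcategories.
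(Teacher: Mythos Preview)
Your proposal is correct and follows essentially the same approach as the paper: apply \Cref{prop:comparing_global_sections} to the geometric morphism $ \pilowerstar \colon U_{\et} \to V_{\et} $ with $ \V' = \Scal(V) $, and use \Cref{prop:comparing_global_sections.2_via_cohomology} together with the axioms of an étale coefficient subcategory to verify hypotheses \enumref{prop:comparing_global_sections}{1}--\enumref{prop:comparing_global_sections}{3}. The paper's proof is just a one-sentence version of what you wrote.
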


\begin{proof}
    The claim follows from \Cref{prop:comparing_global_sections}; in light of \Cref{prop:comparing_global_sections.2_via_cohomology}, hypotheses \enumref{prop:comparing_global_sections}{1}--\enumref{prop:comparing_global_sections}{3} are satisfied by our assumptions and the definition of a coefficient subcategory.
\end{proof}

\begin{definition}
    Let $ \Scal \colon \fromto{\Schqcqsop}{\Catinfty} $ be an étale coefficient system.
    We say that a commutative square of qcqs schemes
    \begin{equation}\label{sq:general_square_2}
        \begin{tikzcd}
            W \arrow[r, "\fbar"] \arrow[d, "\gbar"'] & Y \arrow[d, "g"] \\ 
            X \arrow[r, "f"'] & Z 
        \end{tikzcd}
    \end{equation}
    \defn{satisfies basechange with $ \Scal $-coefficients} if for each $ F \in \Scal(Y) $, the exchange transformation
    \begin{equation*}
        \fromto{\fupperstar\glowerstar(F)}{\gbarlowerstar\fbarupperstar(F)}
    \end{equation*}
    is an equivalence.
\end{definition}

\begin{corollary}[(reducing to strictly henselian local rings)]\label{cor:reduction_to_1-truncated_local}
    Let $ \Scal \colon \fromto{\Schqcqsop}{\Catinfty} $ be an étale coefficient system and consider a commutative square \eqref{sq:general_square_2} of qcqs schemes.
    Assume that for each geometric point $ \fromto{x}{X} $ with image $ \fromto{z}{Z} $, the following conditions are satisfied:
    \begin{enumerate}[label=\stlabel{cor:reduction_to_1-truncated_local}, ref=\arabic*]
        \item\label{cor:reduction_to_1-truncated_local.1} If $ G \in \Scal(\Loc{Y}{z}) $ is $ 1 $-truncated, then the natural map $ \fromto{\Gammaet(\Loc{Y}{z};G)}{\Gammaet(\Loc{W}{x};\pbarupperstar G)} $ is an equivalence.

        \item\label{cor:reduction_to_1-truncated_local.2} For each abelian group object $ A $ of $ \Scal(\Loc{Y}{z})_{\leq 0} $ and integer $ i \geq 0 $, the natural map
        \begin{equation*}
            \fromto{\Het^i(\Loc{Y}{z};A)}{\Het^i(\Loc{W}{x};\pbarupperstar A)}
        \end{equation*}
        is an isomorphism.  
    \end{enumerate}
    Then the square \eqref{sq:general_square_2} satisfies basechange with $ \Scal $-coefficients.
\end{corollary}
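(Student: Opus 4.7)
The plan is to assemble the statement from the two immediately preceding technical results, \Cref{cor:when_is_the_exchange_transformation_an_equivalence} and \Cref{cor:global_sections_over_local_rings}, by noting that the étale coefficient system $\Scal$ is by definition compatible with pullbacks. The work has essentially been done in the preparatory subsections, so this is more of an assembly than a proof with a genuinely hard step.

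First, I would fix $F \in \Scal(Y)$ and invoke \Cref{cor:when_is_the_exchange_transformation_an_equivalence}. Since $\Scal(Y) \subset Y_{\et,<\infty}$, the sheaf $F$ is truncated, so that corollary applies and reduces the claim that $\fupperstar\glowerstar(F) \to \gbarlowerstar\fbarupperstar(F)$ is an equivalence to the claim that, for every geometric point $x \to X$ with image $z \to Z$, the natural map
\begin{equation*}
    \Gammaet(\Loc{Y}{z};\elbarupperstar_z F) \longrightarrow \Gammaet(\Loc{W}{x}; \pbarupperstar \elbarupperstar_z F)
\end{equation*}
is an equivalence.

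Second, I would use the fact that $\Scal \colon \Schqcqsop \to \Catinfty$ is a subfunctor of $(-)_{\et}$, so that the pullback functor $\elbarupperstar_z \colon Y_{\et} \to \Locet{Y}{z}$ restricts to a functor $\Scal(Y) \to \Scal(\Loc{Y}{z})$. Consequently $\elbarupperstar_z F$ lies in $\Scal(\Loc{Y}{z})$, and it therefore suffices to show that for each $H \in \Scal(\Loc{Y}{z})$, the natural map $\Gammaet(\Loc{Y}{z};H) \to \Gammaet(\Loc{W}{x}; \pbarupperstar H)$ is an equivalence.

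Third, I would apply \Cref{cor:global_sections_over_local_rings} to the morphism $\pbar \colon \Loc{W}{x} \to \Loc{Y}{z}$ with the étale coefficient subcategory $\Scal(\Loc{Y}{z}) \subset \Locet{Y}{z}$. Hypotheses \enumref{cor:global_sections_over_local_rings}{1} and \enumref{cor:global_sections_over_local_rings}{2} of that corollary are precisely our standing hypotheses \enumref{cor:reduction_to_1-truncated_local}{1} and \enumref{cor:reduction_to_1-truncated_local}{2}, so the conclusion applies verbatim and delivers the required equivalence. Since there is no genuinely hard step, the only point to double-check is the compatibility of $\Scal$ with the pullback $\elbarupperstar_z$, which is built into the definition of étale coefficient system.
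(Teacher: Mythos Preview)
Your proposal is correct and follows exactly the paper's approach: the paper's proof is the one-liner ``Combine \Cref{cor:when_is_the_exchange_transformation_an_equivalence,cor:global_sections_over_local_rings},'' and you have simply spelled out how that combination goes, including the observation that $\Scal$ being a subfunctor of $(-)_{\et}$ ensures $\elbarupperstar_z F \in \Scal(\Loc{Y}{z})$.
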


\begin{proof}
    Combine \Cref{cor:when_is_the_exchange_transformation_an_equivalence,cor:global_sections_over_local_rings}.
\end{proof}

\begin{remark}\label{rem:arguments_work_for_general_topoi}
    The results of \cref{subsec:generalities_on_strictly_henselian_local_rings,subsec:stalk_of_the_exchange_transformation,subsec:reduction_to_the_local_case} hold with étale \topoi of schemes replaced by arbitrary \topoi.
    To formulate these results in this more general setting, one replaces `geometric point' by `point of \atopos', `étale \topos of the spectrum of a strictly henselian local ring' with `local \topos' (see \cites[Exposé VI, 8.4.6]{MR50:7131}[\S6.2]{arXiv:1807.03281}[\S C.3.6]{MR2063092}{MR977478}), and the `qcqs' assumption by the assumption that the \topos is `bounded coherent' (see \cite[Definitions \SAGthmlink{A.2.0.12} \& \SAGthmlink{A.7.1.2}]{SAG}).
    We have taken care to write the proofs so that they work \textit{verbatim} in this more general setting.
    However, in order to keep the arguments reasonably familiar to an algebro-geometric audience, we decided to formulate the results of this section for étale \topoi of schemes.
\end{remark}


\subsection{Nonabelian basechange theorems}\label{subsec:nonabelian_basechange_theorems}

We now use the results of \cref{subsec:reduction_to_the_local_case} to deduce a number of nonabelian basechange theorems from results already available in the literature.
The first two are the nonabelian refinements of the smooth and proper basechange theorems.

\begin{recollection}\label{def:prosmooth_morphism}
    A morphism of schemes $ f \colon \fromto{X}{Z} $ is \defn{\prosmooth} if there exists a cofiltered diagram $ X_{\bullet} \colon \fromto{I}{\Sch_{Z}} $ of smooth $ Z $-schemes with affine transition maps such that $ X \isomorphic \lim_{i \in I} X_i $ and $ f $ is the projection.
\end{recollection}

\begin{example}
    Let $ f \colon \fromto{X}{Z} $ be a \prosmooth morphism of schemes, and let $ \fromto{x}{X} $ be a geometric point with image $ \fromto{z}{Z} $.
    Then the induced morphism on spectra of strictly local schemes $ \fromto{\Loc{X}{x}}{\Loc{Z}{z}} $ is \prosmooth.
\end{example}

For the next two results, let
\begin{equation}\label{square:nonabelian_basechange}
    \begin{tikzcd}
        W \arrow[r, "\fbar"] \arrow[d, "\gbar"'] \arrow[dr, phantom, very near start, "\lrcorner", xshift=-0.25em, yshift=0.25em] & Y \arrow[d, "g"] \\ 
        X \arrow[r, "f"'] & Z 
    \end{tikzcd}
\end{equation}
be a pullback square of qcqs schemes.

\begin{corollary}[(nonabelian smooth basechange)]\label{cor:nonabelian_smooth_basechange}
    Write $ \Sigma $ for the set of primes invertible on $ Z $.
    If $ f $ is \prosmooth, then the pullback square \eqref{square:nonabelian_basechange} satisfies basechange with \Sigmatorsion coefficients.
\end{corollary}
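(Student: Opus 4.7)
The plan is to apply \Cref{cor:reduction_to_1-truncated_local} to the pullback square~\eqref{square:nonabelian_basechange} with $ \Scal $ the étale coefficient system of \Sigmatorsion sheaves, which qualifies as an étale coefficient system by \Cref{ex:etale_coefficient_systems}\ref{ex:etale_coefficient_systems.2}. It then suffices, for each geometric point $ x \to X $ with image $ z \to Z $, to verify hypotheses \ref{cor:reduction_to_1-truncated_local.1} and \ref{cor:reduction_to_1-truncated_local.2} of that corollary for the induced morphism $ \pbar \colon \Loc{W}{x} \to \Loc{Y}{z} $ of spectra of strictly henselian local rings.

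The first observation I would record is that $ \pbar $ is \prosmooth: since $ f $ is \prosmooth, so is the strict localization $ p \colon \Loc{X}{x} \to \Loc{Z}{z} $, and $ \pbar $ is the base change of $ p $ along $ q \colon \Loc{Y}{z} \to \Loc{Z}{z} $. In addition, $ \pbar $ is induced by a local ring homomorphism, hence sends the closed point of $ \Loc{W}{x} $ to the closed point of $ \Loc{Y}{z} $.

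Condition \ref{cor:reduction_to_1-truncated_local.2} is then essentially automatic. On the spectrum of a strictly henselian local ring, the étale cohomology of any abelian sheaf vanishes in positive degrees, so both $ \Het^i(\Loc{Y}{z}; A) $ and $ \Het^i(\Loc{W}{x}; \pbarupperstar A) $ vanish for $ i \geq 1 $; in degree $ 0 $, both groups agree with the stalk of $ A $ at the common image of the closed point, since global sections on such a spectrum coincide with the stalk at the closed point.

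The main content, and the only step where $ f $ being \prosmooth is genuinely used, is condition \ref{cor:reduction_to_1-truncated_local.1}: for each $ 1 $-truncated \Sigmatorsion sheaf of spaces $ G $ on $ \Loc{Y}{z} $, the pullback $ \Gammaet(\Loc{Y}{z}; G) \to \Gammaet(\Loc{W}{x}; \pbarupperstar G) $ is an equivalence. I would first reduce to the case that $ G $ is $ 1 $-truncated \Sigmatorsion constructible, by writing $ G $ as a filtered colimit of such sheaves and using that the terminal object of the étale topos of a qcqs scheme is compact on $ n $-truncated objects. The remaining constructible claim is precisely Giraud's classical non-abelian smooth basechange theorem for stacks in groupoids \cite[Chapitre VII, Théorème 2.1.2]{MR0344253} applied to the \prosmooth morphism $ \pbar $ between strictly henselian local schemes. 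The main obstacle is thus to locate the correct classical input and verify that its hypotheses match the present setup; everything else is formal.
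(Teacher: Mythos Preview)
There is a genuine gap: you have misidentified the schemes $\Loc{Y}{z}$ and $\Loc{W}{x}$. In the notation of the paper (see \Cref{ntn:strict_localization} and the cube \eqref{cube:reduction_to_local_rings}), these are \emph{not} spectra of strictly henselian local rings. Rather, $\Loc{Y}{z} = Y \times_Z \Spec(\Osh_{Z,z})$ is the pullback of $Y$ over the strict localization of $Z$, and similarly $\Loc{W}{x} = W \times_X \Spec(\Osh_{X,x})$. Only the base schemes $\Loc{X}{x}$ and $\Loc{Z}{z}$ are strictly henselian local. For instance, if $g \colon Y \to Z$ is a family of curves, then $\Loc{Y}{z}$ is a curve over $\Spec(\Osh_{Z,z})$ and has plenty of nontrivial étale cohomology.

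Consequently, your claim that condition \enumref{cor:reduction_to_1-truncated_local}{2} is ``essentially automatic'' because the cohomology vanishes is incorrect. The isomorphism
\begin{equation*}
\Het^i(\Loc{Y}{z};A) \isomorphism \Het^i(\Loc{W}{x};\pbarupperstar A)
\end{equation*}
is exactly the local form of the classical smooth basechange theorem \cite[Exposé XII, Corollaire 1.2]{MR50:7132} --- indeed, this is where the \prosmooth hypothesis and the \Sigmatorsion assumption on $A$ are genuinely used. The paper's proof invokes precisely this input for \enumref{cor:reduction_to_1-truncated_local}{2} and Giraud's theorem for \enumref{cor:reduction_to_1-truncated_local}{1}; both are substantive classical results, and neither is trivial. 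Your overall strategy (apply \Cref{cor:reduction_to_1-truncated_local}) is correct, but the verification of the hypotheses needs to be redone with the right geometry in mind.
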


\begin{proof}
    It suffices to show that the étale coefficient system of \Sigmatorsion sheaves satisfies the hypotheses of \Cref{cor:reduction_to_1-truncated_local}.
    Hypothesis \enumref{cor:reduction_to_1-truncated_local}{1} follows from Giraud's smooth basechange for sheaves of groupoids \cite[Chapitre VII, Théorème 2.1.2]{MR0344253}.
    Hypothesis \enumref{cor:reduction_to_1-truncated_local}{2} is the classical smooth basechange theorem \cites[Exposé XII, Corollaire 1.2]{MR50:7132}.
\end{proof}

\begin{corollary}[(nonabelian proper basechange)]\label{cor:nonabelian_proper_basechange}
    If $ g $ is proper, then the pullback square \eqref{square:nonabelian_basechange} satisfies basechange with torsion coefficients.
\end{corollary}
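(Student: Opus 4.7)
The plan is to deduce this exactly as in the smooth case (\Cref{cor:nonabelian_smooth_basechange}), by verifying that the étale coefficient system of torsion sheaves (\enumref{ex:etale_coefficient_systems}{2} with $\Sigma$ the set of all primes) satisfies the hypotheses of \Cref{cor:reduction_to_1-truncated_local} for the given pullback square. Since the square \eqref{square:nonabelian_basechange} is a pullback and $g$ is proper, for any geometric point $x \to X$ with image $z \to Z$, the induced square
\begin{equation*}
    \begin{tikzcd}
        \Loc{W}{x} \arrow[r, "\pbar"] \arrow[d] & \Loc{Y}{z} \arrow[d, "q"] \\
        \Loc{X}{x} \arrow[r, "p"'] & \Loc{Z}{z}
    \end{tikzcd}
\end{equation*}
is still a pullback square, and $q$ is still proper (properness is stable under base change). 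Moreover, $\Loc{X}{x}$ and $\Loc{Z}{z}$ are spectra of strictly henselian local rings with closed points $x$ and $z$.

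Next I would verify hypothesis \enumref{cor:reduction_to_1-truncated_local}{1}: for any $1$-truncated torsion étale sheaf $G$ on $\Loc{Y}{z}$, the natural map $\Gammaet(\Loc{Y}{z};G) \to \Gammaet(\Loc{W}{x};\pbarupperstar G)$ is an equivalence. This is the nonabelian proper basechange for sheaves of groupoids, which is a classical theorem of Giraud. Combined with the fact that for strictly henselian $\Loc{X}{x}$ we have $\Gammaet(\Loc{X}{x};-) \simeq \xupperstar$ (\Cref{rec:global_sections_strictly_henselian}), this is the standard statement that the stalk at $x$ of $p_\ast G$ agrees with the global sections of the restriction of $G$ to the geometric fiber, which is exactly what proper basechange for torsion sheaves of groupoids provides.

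Finally I would verify hypothesis \enumref{cor:reduction_to_1-truncated_local}{2}: for each torsion abelian group object $A$ and each integer $i \geq 0$, the natural map
\begin{equation*}
    \Het^i(\Loc{Y}{z};A) \to \Het^i(\Loc{W}{x};\pbarupperstar A)
\end{equation*}
is an isomorphism. This is precisely the classical proper basechange theorem for étale cohomology with torsion coefficients (SGA 4, Exposé XII), applied to the pullback square of strictly henselian local schemes above.

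With both hypotheses verified, \Cref{cor:reduction_to_1-truncated_local} implies that the pullback square \eqref{square:nonabelian_basechange} satisfies basechange with torsion coefficients. The only genuinely delicate point in this argument is checking hypothesis \enumref{cor:reduction_to_1-truncated_local}{1} --- that is, invoking Giraud's nonabelian proper basechange for $1$-truncated torsion sheaves; once one has this classical input, the topos-theoretic machinery of \Cref{cor:reduction_to_1-truncated_local} (which bootstraps from $1$-truncated sheaves and abelian cohomology to arbitrary truncated torsion sheaves via the theory of gerbes) does the remaining work.
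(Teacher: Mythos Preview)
Your proposal is correct and follows essentially the same approach as the paper: verify the two hypotheses of \Cref{cor:reduction_to_1-truncated_local} for the torsion coefficient system by invoking Giraud's proper basechange for sheaves of groupoids (hypothesis~\enumref{cor:reduction_to_1-truncated_local}{1}) and the classical proper basechange theorem from SGA~4 (hypothesis~\enumref{cor:reduction_to_1-truncated_local}{2}). The only additional point the paper notes is that Giraud's original result carries noetherianity assumptions, which are removed in \cite[Exposé XX, Théorème 2.1.2]{MR3309086}.
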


\begin{proof}
    It again suffices to show that the étale coefficient system of torsion sheaves satisfies the hypotheses of \Cref{cor:reduction_to_1-truncated_local}.
    Hypothesis \enumref{cor:reduction_to_1-truncated_local}{1} follows from Giraud's proper basechange for sheaves of groupoids \cites[Chapitre VII, Théorème 2.2.2]{MR0344253}[Exposé XX, Théorème 2.1.2]{MR3309086},%
    \footnote{Giraud's result makes noetherianity asssumptions; \cite[Exposé XX, Théorème 2.1.2]{MR3309086} explains why these assumptions are unnecessary.} 
    and hypothesis \enumref{cor:reduction_to_1-truncated_local}{2} is the classical result \cites[Exposé XII, Théorème 5.1]{MR50:7132}.
\end{proof}

\begin{remark}\label{rem:Choughs_work}
    Though the specific reduction to a more simple case differs, the key idea in our proof of nonabelian proper basechange is the same as in Chough's proof \cite[Theorem 1.2]{MR4493612}.
    A combination of Chough's work and the proof of the basechange theorem for oriented fiber products of bounded coherent \topoi \cites[Theorem 7.1.7]{arXiv:1807.03281}[Exposé XI, Théorème 2.4]{MR3309086} inspired our proofs of \Cref{prop:comparing_global_sections,cor:nonabelian_smooth_basechange,cor:nonabelian_proper_basechange}.
\end{remark}

Now we explain the nonabelian extensions of the Gabber--Huber affine analogue of the proper basechange theorem \cites{MR1286833}{MR1214956} and the Fujiwara--Gabber rigidity theorem \cite[Corollary 6.6.4]{MR1360610}.%
\footnote{The Fujiwara--Gabber theorem generalizes a result of Elkik \cite[p. 579]{MR345966}.}
To state these results, we fix the following notation.

\begin{notation}\label{ntn:henselian_pairs}
    Let $ (A,I) $ be a henselian pair and $ f \colon \fromto{X}{\Spec(A)} $ a proper morphism.
    Write
    \begin{equation*}
        Z \colonequals \Spec(A/I) \crosslimits_{\Spec(A)} X \comma
    \end{equation*}
    and write $ i \colon \incto{Z}{X} $ for the inclusion.
    Let $ U \subset \Spec(A) $ be an open containing $ \Spec(A) \sminus \Vup(I) $.
    Write $ A\Icomp $ for the $ I $-adic completion of $ A $, and write
    \begin{equation*}
        \Uhat \colonequals U \crosslimits_{\Spec(A)} \Spec(A\Icomp) \period
    \end{equation*}
    Write $ \pi \colon \fromto{\Uhat}{U} $ for the projection.
\end{notation}

\begin{corollary}[(nonabelian affine analogue of proper basechange)]\label{cor:affine_analogue_of_proper_basechange}
    Let $ (A,I) $ be a henselian pair, and keep \Cref{ntn:henselian_pairs}.   
    Then: 
    \begin{enumerate}[label=\stlabel{cor:affine_analogue_of_proper_basechange}, ref=\arabic*]
        \item\label{cor:affine_analogue_of_proper_basechange.1} For every torsion sheaf of spaces $ F \in X_{\et} $, the natural map $ \Gammaet(X; F) \to \Gammaet(Z; \iupperstar F) $ is an equivalence.

        \item\label{cor:affine_analogue_of_proper_basechange.2} The induced map of profinite spaces $  \Pietprofin(Z) \to \Pietprofin(X) $ is an equivalence.
    \end{enumerate}
\end{corollary}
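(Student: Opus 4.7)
The plan is to deduce both statements from the machinery developed in \cref{subsec:reduction_to_the_local_case}, applied to the closed immersion $i \colon Z \hookrightarrow X$. Take $\Scal$ to be the étale coefficient system assigning to each qcqs scheme its \category of torsion étale sheaves (see \enumref{ex:etale_coefficient_systems}{2}).

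For \enumref{cor:affine_analogue_of_proper_basechange}{1}, apply \Cref{cor:global_sections_over_local_rings} to the pullback functor $\iupperstar \colon X_{\et} \to Z_{\et}$, restricted to torsion sheaves. Two hypotheses must be verified. First, for every torsion étale sheaf of abelian groups $A$ on $X$ and every integer $n \geq 0$, the natural map $\Het^{n}(X; A) \to \Het^{n}(Z; \iupperstar A)$ is an isomorphism; this is precisely the classical Gabber--Huber affine analogue of proper basechange \cites{MR1286833}{MR1214956}. Second, for every $1$-truncated torsion sheaf of spaces $G$ on $X$, the natural map $\Gammaet(X; G) \to \Gammaet(Z; \iupperstar G)$ is an equivalence; this is the groupoid-valued version of the Gabber--Huber theorem in the henselian-pair setting, which we establish by a direct Giraud-style argument (parallel to the one cited for nonabelian smooth and proper basechange).

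Once \enumref{cor:affine_analogue_of_proper_basechange}{1} is established, \enumref{cor:affine_analogue_of_proper_basechange}{2} follows immediately. For each \pifinite space $K$, the constant sheaf $\Gammaupperstar_{X_{\et}}(K)$ is torsion lisse by \Cref{def:Sigma-torsion_lisse_sheaf}, so applying \enumref{cor:affine_analogue_of_proper_basechange}{1} yields a functorial equivalence $\Gammaet(X; \Gammaupperstar_{X_{\et}}(K)) \simto \Gammaet(Z; \Gammaupperstar_{Z_{\et}}(K))$. Unwinding the definition of the shape (\Cref{rec:shape}) and passing to the profinite completion, this is exactly the statement that the induced map $\Pietprofin(Z) \to \Pietprofin(X)$ is an equivalence in $\ProSpcfin$.

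The principal obstacle is the verification of the $1$-truncated hypothesis, which is where each nonabelian basechange theorem in the paper requires its own input. For the smooth and proper cases the authors cite Giraud's thesis directly; for the henselian-pair setting, no such reference is immediate, so one must either extend Giraud's argument directly, or reduce to a simpler situation. The latter route proceeds by applying \Cref{cor:nonabelian_proper_basechange} to the pullback square
\begin{equation*}
    \begin{tikzcd}
        Z \arrow[r] \arrow[d] & X \arrow[d, "f"] \\
        \Spec(A/I) \arrow[r] & \Spec(A)
    \end{tikzcd}
\end{equation*}
to reduce to the case $X = \Spec(A)$ (using that $\flowerstar$ preserves torsion sheaves for $f$ proper, by the finite cohomological dimension of the geometric fibers), and then combines the classical rigidity of finite étale covers for henselian pairs with \Cref{thm:classificationofgerbes} to obtain the $1$-truncated statement.
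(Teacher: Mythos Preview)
Your overall strategy matches the paper's exactly: apply \Cref{cor:global_sections_over_local_rings} to the morphism $i$, verify the $1$-truncated and abelian-cohomology inputs, and then deduce \enumref{cor:affine_analogue_of_proper_basechange}{2} from \enumref{cor:affine_analogue_of_proper_basechange}{1} by specializing to constant sheaves with \pifinite values.

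The only difference is in how the $1$-truncated input is sourced. You treat it as a gap requiring new work---either a Giraud-style argument from scratch, or a reduction to the affine case $X=\Spec(A)$ via nonabelian proper basechange followed by rigidity of finite étale covers and gerbe classification. The paper instead observes that Gabber's original article already contains this: \cite[\S5, Corollary~1]{MR1286833} proves the comparison $\Gammaet(X;G)\simeq\Gammaet(Z;\iupperstar G)$ for $1$-truncated torsion sheaves (stated there for ind-finite stacks in groupoids), so both inputs come from a single citation and no further argument is needed. Your proposed reduction is plausible, but note that it depends on $f_*$ preserving the class of torsion $1$-truncated sheaves for $f$ proper; this is true but not stated in the paper, and justifying it (via constructibility of $\mathrm{R}^0f_*$, $\mathrm{R}^1f_*$ on homotopy sheaves) is comparable in effort to what you are trying to avoid.
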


\begin{proof}
    First note that \enumref{cor:affine_analogue_of_proper_basechange}{2} follows from \enumref{cor:affine_analogue_of_proper_basechange}{1} by restricting to constant sheaves.
    For \enumref{cor:affine_analogue_of_proper_basechange}{1}, by \Cref{cor:global_sections_over_local_rings} applied to the morphism $ i $, it suffices to prove the claim when $ F $ is $ 1 $-truncated, as well for abelian cohomology with torsion coefficients.
    These results are the content of \cite[\S5, Corollary 1]{MR1286833}. 
\end{proof}

\begin{remark}
    The most typical formulation of the affine analogue of proper basechange assumes that $ X = \Spec(A) $ and $ Z = \Spec(A/I) $.
\end{remark}

The following removes the noetherianity and characteristic $ 0 $ assumptions from \cite[Theorem 4.2.2]{MR3427263}.
See also \cite[\S\S6.2 \& 6.3]{MR3714509}.

\begin{corollary}[(nonabelian Fujiwara--Gabber rigidity)]\label{cor:Fujiwara-Gabber_rigidity}
    Let $ (A,I) $ be a henselian pair with $ I \subset A $ finitely generated, and keep \Cref{ntn:henselian_pairs}.
    Then:
    \begin{enumerate}[label=\stlabel{cor:Fujiwara-Gabber_rigidity}, ref=\arabic*]
        \item\label{cor:Fujiwara-Gabber_rigidity.1} For every torsion sheaf of spaces $ F \in U_{\et} $, the natural map \smash{$ \Gammaet(U; F) \to \Gammaet(\Uhat; \piupperstar F) $} is an equivalence.

        \item\label{cor:Fujiwara-Gabber_rigidity.2} The induced map of profinite spaces $ \Pietprofin(\Uhat) \to \Pietprofin(U) $ is an equivalence.
    \end{enumerate}
\end{corollary}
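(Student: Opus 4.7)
The proof mirrors that of \Cref{cor:affine_analogue_of_proper_basechange}. Claim (2) follows from (1) by specializing to constant sheaves with \pifinite values: such constant sheaves are torsion, $\piupperstar$ commutes with the constant sheaf functor, and by the observation in \cref{subsec:the_etale_homotopy_type} the induced map of profinite shapes is an equivalence precisely when pullback along $\pi$ induces an equivalence on global sections of all such constant sheaves.

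For (1), I would apply \Cref{cor:global_sections_over_local_rings} to the morphism $\pi \colon \Uhat \to U$ with $\Scal$ the étale coefficient system of torsion sheaves (\Cref{ex:etale_coefficient_systems}). This reduces the proof to two inputs: (a) for every $1$-truncated torsion étale sheaf $G$ on $U$, the natural map $\Gammaet(U;G) \to \Gammaet(\Uhat;\piupperstar G)$ is an equivalence; and (b) for every torsion abelian étale sheaf $A$ on $U$ and each integer $i \geq 0$, the natural map $\Het^i(U;A) \to \Het^i(\Uhat;\piupperstar A)$ is an isomorphism.

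Input (b) is the classical Fujiwara--Gabber rigidity theorem \cite[Corollary~6.6.4]{MR1360610}, whose hypotheses match ours (in particular the finite generation of $I$). Input (a) is the analogous rigidity statement for stacks in groupoids, which can be extracted from Fujiwara's arguments --- they apply in non-abelian generality to torsion sheaves --- or, failing that, deduced by combining the sheaf-of-sets version of rigidity with the classification of $1$-gerbes banded by a (possibly non-abelian) sheaf of groups, after noting that the relevant bandings remain torsion (cf.\ \Cref{prop:pi_n_of_torsion_gerbe_torsion}). Once both (a) and (b) are in hand, \Cref{cor:global_sections_over_local_rings} immediately yields (1).

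The principal technical obstacle is pinning down input (a): the abelian statement is standard, but the non-abelian extension requires either a careful reading of Fujiwara's original arguments or an explicit bootstrap from the abelian case through the classification of gerbes. Everything else is a direct application of the reduction machinery developed in \cref{sec:reduction_to_local_rings}, and no new geometric input beyond the classical Fujiwara--Gabber theorem is needed.
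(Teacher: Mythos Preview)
Your approach is essentially identical to the paper's: reduce (2) to (1) via constant sheaves, then apply \Cref{cor:global_sections_over_local_rings} to $\pi$ with torsion coefficients, leaving the $1$-truncated and abelian-cohomology inputs. The paper resolves your ``principal technical obstacle'' for input (a) by citing \cite[Exposé XX, Théorème 2.1.2]{MR3309086} directly (no bootstrap needed), and for input (b) cites \cite[Theorem 6.11]{MR4278670}.
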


\begin{proof}
    Again, \enumref{cor:Fujiwara-Gabber_rigidity}{2} follows from \enumref{cor:Fujiwara-Gabber_rigidity}{1} by restricting to constant sheaves.
    For \enumref{cor:Fujiwara-Gabber_rigidity}{1}, by \Cref{cor:global_sections_over_local_rings} applied the morphism $ \pi $, it suffices to prove the claim when $ F $ is $ 1 $-truncated, as well for abelian cohomology with torsion coefficients.
    The $ 1 $-truncated case is the content of \cite[Exposé XX, Théorème 2.1.2]{MR3309086}. 
    The abelian cohomology statement is well-known; see, for example, \cite[Theorem 6.11]{MR4278670}.
\end{proof}

\begin{remark}
    The most typical formulation of the Fujiwara--Gabber theorem assumes that
    \begin{equation*}
        U = \Spec(A) \sminus \Vup(I) \andeq \Uhat = \Spec(A\Icomp) \sminus \Vup(I A\Icomp) \period
    \end{equation*} 
\end{remark}

\begin{remark}[{(basechange for torsion sheaves of spectra)}]
   \Cref{cor:nonabelian_smooth_basechange,cor:nonabelian_proper_basechange,cor:affine_analogue_of_proper_basechange,cor:Fujiwara-Gabber_rigidity} imply the same results for ($ \Sigma $-)torsion sheaves of \textit{spectra}.
    Let us briefly explain how.
    Let $ \Sigma $ be a set of prime numbers.
    We say that a spectrum $ E $ is \defn{\Sigmafinite} if for each $ n \in \ZZ $, the space $ \Omega^{\infty} E[n] $ is a \Sigmafinite space.
    (Note that a \Sigmafinite spectrum is necessarily bounded-above.)
    Given this, if $ X $ is a qcqs scheme, we can modify \Cref{def:Sigma-torsion_lisse_sheaf,def:Sigma-torsion_etale_sheaf} to make sense for étale sheaves of spectra.
    Let $ F $ be an étale sheaf of spectra on $ X $. 
    We say that $ F $ is \defn{\Sigmatorsion lisse} if there is an étale cover $ \{U_1,\ldots,U_n\} $ of $ X $ such that for each $ i $, the restriction $ \restrict{F}{U_i} $ is constant with value a \Sigmafinite spectrum.
    We say that $ F $ is \defn{\Sigmatorsion constructible} if there exists a finite poset $P$ and a stratification \smash{$ \{X_p\}_{p \in P} $} of $ X $ by qcqs locally closed subschemes such that for each $ p \in P $, the étale sheaf of spectra \smash{$ \restrict{F}{X_p} $} is \Sigmatorsion lisse.
    We say that $ F $ is \defn{\Sigmatorsion} if $ F $ is truncated and $ F $ can be written as the colimit of a filtered diagram of \Sigmatorsion constructible étale sheaves of spectra.

    Write $ \Stab(X_{\et}) $ for the stable \category of étale sheaves of spectra on $ X $.
    The \category $ \Stab(X_{\et}) $ coincides with the stabilization of the \topos $ X_{\et} $.
    By the functoriality of stabilization in left exact functors, pullback along a morphism of schemes commutes with shifts and the underlying space functor $ \Omega_X^{\infty} \colon \fromto{\Stab(X_{\et})}{X_{\et}} $.
    Moreover, the functor $ \Omega_{X}^{\infty} $ preserves sifted colimits \HTT{Corollary}{5.2.6.18}.
    Hence if $ F \in \Stab(X_{\et}) $ is a \Sigmatorsion lisse, \Sigmatorsion constructible, or \Sigmatorsion étale sheaf of spectra, then for each $ n \in \ZZ $, the étale sheaf of spaces $ \Omega_X^{\infty} F[n] $ has the same property.
    As a result, a direct application of  \cite[Proposition 4.7]{arXiv:2108.03545} shows that \Cref{cor:nonabelian_smooth_basechange,cor:nonabelian_proper_basechange,cor:affine_analogue_of_proper_basechange,cor:Fujiwara-Gabber_rigidity} imply the same results for ($ \Sigma $-)torsion sheaves of spectra.
\end{remark}


\subsection{Application: invariance under specialization}\label{subsec:invariance_under_specialization}

As an immediate application of the results of \cref{subsec:nonabelian_basechange_theorems}, we see that for a proper morphism $ \fromto{X}{S} $, the profinite étale homotopy types of the geometric fiber $ X_{s} $ and the `Milnor ball' $ \Loc{X}{s} $ agree:

\begin{corollary}\label{cor:comparison_homotopy_type_of_fibre_and_Milnor_ball}
    Let $ f \colon \fromto{X}{S}$ be a proper morphism between qcqs schemes and $\fromto{s}{S}$ a geometric point.
    Then the closed immersion $ i \colon \incto{X_{s}}{\Loc{X}{s}} $ induces an equivalence
    \begin{equation*}
        \Pietprofin(X_{s}) \equivalence \Pietprofin(\Loc{X}{s}) \period
    \end{equation*}
\end{corollary}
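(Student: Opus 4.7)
The plan is to recognize this statement as a direct instance of the nonabelian affine analogue of proper basechange (\Cref{cor:affine_analogue_of_proper_basechange}), applied to the henselian pair cut out by the strict localization. Concretely, set $A \colonequals \Osh_{S,s}$ and let $I \subset A$ be the maximal ideal. Since $A$ is strictly henselian local, the pair $(A,I)$ is henselian, and by construction $\Loc{S}{s} = \Spec(A)$. The closed subscheme $\Spec(A/I)$ is the spectrum of the residue field of $A$, which is a separable closure of the residue field at the image point of $s$; it may therefore be identified with the geometric point $s$ for the purposes of profinite étale homotopy theory (a different separably closed extension of the same residue field produces a scheme with the same profinite shape, by topological invariance of the étale site over separably closed fields).

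Next, I would observe that pulling the proper morphism $f \colon X \to S$ back along $\el_s \colon \Spec(A) \to S$ yields a proper morphism $f_{(s)} \colon \Loc{X}{s} \to \Spec(A)$, and pulling this further back along the closed immersion $\Spec(A/I) \hookrightarrow \Spec(A)$ recovers the closed immersion $i \colon X_s \hookrightarrow \Loc{X}{s}$ of the statement. This places us precisely in the situation of \Cref{ntn:henselian_pairs}, with $X$ there replaced by our $\Loc{X}{s}$ and $Z$ there replaced by $X_s$. Applying \Cref{cor:affine_analogue_of_proper_basechange}\enumref{cor:affine_analogue_of_proper_basechange}{2} then immediately yields the desired equivalence $\Pietprofin(X_s) \equivalence \Pietprofin(\Loc{X}{s})$.

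I do not expect any substantial obstacle here: the content of the statement lies in the already-proven nonabelian affine analogue of proper basechange, and everything else is bookkeeping about pullbacks. The only minor subtlety is the identification of $\Spec(A/I)$ with the geometric point $s$, which as remarked is harmless at the level of profinite shapes.
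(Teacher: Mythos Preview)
Your proposal is correct and matches the paper's proof exactly: apply \Cref{cor:affine_analogue_of_proper_basechange} to the henselian pair $(\Osh_{S,s},\mfrak_s)$ and the proper morphism $\Loc{X}{s} \to \Loc{S}{s}$. The subtlety you flag about identifying $\Spec(A/I)$ with $s$ does not actually arise here, since in the paper's \Cref{ntn:strict_localization} the symbol $s$ in the pullback diagram already denotes the closed point $\Spec(\Osh_{S,s}/\mfrak_s)$ of the strict localization.
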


\begin{proof}
    Apply \Cref{cor:affine_analogue_of_proper_basechange} to the henselian pair $ (\Ocal_{S,s}^{\sh},\mfrak_s) $ and the morphism $ \fromto{\Loc{X}{s}}{\Loc{S}{s}} $.
\end{proof}

\begin{remark}
    \Cref{cor:comparison_homotopy_type_of_fibre_and_Milnor_ball} removes the noetherianity hypotheses from \cite[Proposition 8.6]{MR676809}.
\end{remark}

There is a dual version of \cref{cor:comparison_homotopy_type_of_fibre_and_Milnor_ball} for prosmooth morphisms.
For this, we need a few lemmas.

\begin{lemma}\label{lem:constant_sheaves_on_irreducible_spaces}
    Let $ T $ be an irreducible topological space with generic point $ \eta $.
    Also write $ \eta \colon \incto{\{\eta\}}{T} $ for the inclusion of the generic point.
    Then:
    \begin{enumerate}[label=\stlabel{lem:constant_sheaves_on_irreducible_spaces}, ref=\arabic*]
        \item\label{lem:constant_sheaves_on_irreducible_spaces.1} There is a natural identification $ \etalowerstar = \Gammaupperstar_{\Sh(T)} $ of functors $ \fromto{\Spc}{\Sh(T)} $.

        \item\label{lem:constant_sheaves_on_irreducible_spaces.2} If $ F \in \Sh(T) $ is a constant sheaf, then the unit $ \fromto{F}{\etalowerstar\etaupperstar(F)} $ is an equivalence.
    \end{enumerate}
\end{lemma}

\begin{proof}
    First note that \enumref{lem:constant_sheaves_on_irreducible_spaces}{2} is an immediate consequence of \enumref{lem:constant_sheaves_on_irreducible_spaces}{1}.
    For \enumref{lem:constant_sheaves_on_irreducible_spaces}{1}, notice that since $ T $ is irreducible and $ \eta $ is the generic point, every nonempty open of $ T $ contains $ \eta $.
    Thus, by the definition of the pushforward, we see that for each $ K \in \Spc $ and open $ U \subset T $, we have
    \begin{equation*}
        \etalowerstar(K)(U) =
        \begin{cases}
            K, & U \neq \emptyset \\ 
            \pt, & U = \emptyset \period
        \end{cases}
    \end{equation*}
    Since $ \etalowerstar(K) $ is a sheaf whose restriction to nonempty opens of $ T $ is the constant \textit{presheaf}, we deduce that $ \etalowerstar(K) $ is the constant sheaf at $ K $.
\end{proof}

\begin{recollection}[(schemes with strictly henselian local rings)]\label{rec:everywhere_strictly_local_schemes}
    A scheme $ X $ is \defn{everywhere strictly local} if for each point $ x \in X $, the local ring $ \Ocal_{X,x} $ is strictly henselian.
    In this case, all residue fields of $ X $ are separably closed.
    Moreover, the natural geometric morphism of \topoi
    \begin{equation*}
        \fromto{X_{\et}}{X_{\zar}}
    \end{equation*}
    is an equivalence \cite[Corollary 2.5]{MR3649361}.
\end{recollection}

The following result can be seen as a dual of \cref{prop:stalks_via_global_sections_for_strictly_henselian_local_rings}.

\begin{lemma}\label{lemma:pushforward_from_generic_is_fully_faithful}
	Let $ S $ be an irreducible everywhere strictly local scheme with generic point $ \eta $.
    Note that $ \eta $ is a geometric point, and also write $ \eta \colon \Spec(\upkappa(\eta)) \to S $ for the inclusion of the generic point.
	Then: 
    \begin{enumerate}[label=\stlabel{lemma:pushforward_from_generic_is_fully_faithful}, ref=\arabic*]
        \item\label{lemma:pushforward_from_generic_is_fully_faithful.1} The functor $ \etalowerstar \colon \Spec(\upkappa(\eta))_{\et} \to S_{\et} $ is equivalent to the constant sheaf functor.

        \item\label{lemma:pushforward_from_generic_is_fully_faithful.2} If $ F \in S_{\et} $ is a constant sheaf, then the unit $ \fromto{F}{\etalowerstar\etaupperstar(F)} $ is an equivalence.
    \end{enumerate}
\end{lemma}

\begin{proof}
	Since $ S $ is everywhere strictly local, the natural geometric morphism $ \fromto{S_{\et}}{S_{\zar}} $ is an equivalence.
    So it suffices to prove the claim where we replace étale \topoi by Zariski \topoi. 
    By assumption, the underlying topological space of $ S $ is irreducible.
    Hence the claim is an immediate consequence of \Cref{lem:constant_sheaves_on_irreducible_spaces}.
\end{proof}

\noindent Here is the promised dual of \Cref{cor:comparison_homotopy_type_of_fibre_and_Milnor_ball}:

\begin{proposition}\label{prop:comparison_homotopy_fiber_and_Milnor_coball}
	Let $ S $ be an irreducible everywhere strictly local scheme, write $ \Sigma $ for the set of prime numbers invertible on $ S $, and denote the inclusion of the generic point by $ \eta \colon \Spec(\upkappa(\eta)) \to S $.
    Let $ f \colon X \to S $ be a prosmooth morphism, and write $ i \colon X_\eta \to X $ for the inclusion of the generic (geometric) fiber of $ f $. 
	Then:
	\begin{enumerate}[label=\stlabel{prop:comparison_homotopy_fiber_and_Milnor_coball}, ref=\arabic*]
		\item\label{prop:comparison_homotopy_fiber_and_Milnor_coball.1} For every \emph{constant} \Sigmatorsion étale sheaf $ F $ on $ X$, the unit $  \fromto{F}{i_*i^*(F)} $ is an equivalence.
		
		\item\label{prop:comparison_homotopy_fiber_and_Milnor_coball.2} For every \emph{constant} \Sigmatorsion étale sheaf $ F $ on $ X $, the natural map
		\begin{equation*}
			\fromto{\Gammaet(X_\eta;F)}{\Gammaet(X; \iupperstar F)}
		\end{equation*}
		is an equivalence.
		
		\item\label{prop:comparison_homotopy_fiber_and_Milnor_coball.3} The morphism $ i $ induces an equivalence $ \equivto{\Piet(X_\eta)\Sigmacomp}{\Piet(X)\Sigmacomp} $.
	\end{enumerate}
\end{proposition}

\begin{proof}
	First note that \enumref{prop:comparison_homotopy_fiber_and_Milnor_coball}{1} immediately implies \enumref{prop:comparison_homotopy_fiber_and_Milnor_coball}{2} and \enumref{prop:comparison_homotopy_fiber_and_Milnor_coball}{3}.
    Since every constant étale sheaf on $ X $ is pulled back from $ S $, write $ F \equivalent \fupperstar(G) $ for a constant étale sheaf $ G $ on $ S $.
	We need to show that the unit map
	\begin{equation*}
	   \fupperstar(G) \to \ilowerstar \iupperstar \fupperstar(G) \equivalent \ilowerstar \fupperstar_{\eta} \etaupperstar(G)
	\end{equation*}
	is an equivalence.
	Since $ \etaupperstar(G) $ is a \Sigmatorsion sheaf and $ f $ is prosmooth, applying nonabelian smooth basechange (\Cref{cor:nonabelian_smooth_basechange}) to the pullback square
	\begin{equation*}
        \begin{tikzcd}
    		X_{\eta} \arrow[d, "i"'] \arrow[r, "f_{\eta}"] &  \Spec(\upkappa(\eta)) \arrow[d, "\eta"] \\
    		X \arrow[r, "f"'] & S
    	\end{tikzcd}
    \end{equation*}
	shows that 
    \begin{equation*}
        \ilowerstar \fupperstar_{\eta} \etaupperstar(G) \equivalent \fupperstar \etalowerstar \etaupperstar(G) \period
    \end{equation*}
    We deduce that the map 
    \begin{equation*}
       \fupperstar(G) \to \ilowerstar \iupperstar \fupperstar(G) \equivalent \fupperstar \etalowerstar \etaupperstar(G)
    \end{equation*}
    in question may be identified with applying $ \fupperstar $ to the unit map $ G \to \etalowerstar \etaupperstar(G) $.
	Since $ S $ is everywhere strictly local, \cref{lemma:pushforward_from_generic_is_fully_faithful} shows that the latter map is an equivalence; hence the claim follows.
\end{proof}

We conclude this subsection by using \Cref{cor:comparison_homotopy_type_of_fibre_and_Milnor_ball,prop:comparison_homotopy_fiber_and_Milnor_coball} to show that for a smooth proper morphism $ f \colon \fromto{X}{S} $, the étale homotopy types of the Milnor balls $ \Loc{X}{s} $ and geometric fibers $ X_s $ are invariant under specialization.
First let us recall a bit about étale specializations and define the specialization morphism.

\begin{recollection}[(étale specializations)]\label{rec:etale_specializations}
   Let $ S $ be a scheme and let $ \fromto{s}{S} $ and $ \fromto{t}{S} $ be geometric points.
   An \defn{étale specialization} $ s \cospecializes t $ is a morphism of $ S $-schemes \smash{$ \fromto{\Loc{S}{t}}{\Loc{S}{s}} $}.
   See \stacks{0GJ2} for more background.

   To simplify things, we say `let \smash{$ \alpha \colon \fromto{\Loc{S}{t}}{\Loc{S}{s}} $} be an étale specialization' to mean that the geometric points $ \fromto{s}{S} $ and $ \fromto{t}{S} $ as well as the morphism $ \alpha $ have been specified.
\end{recollection}

\begin{notation}
   Let $ f \colon \fromto{X}{S} $ be a morphism of schemes and let \smash{$ \alpha \colon \fromto{\Loc{S}{t}}{\Loc{S}{s}} $} be an étale specialization.
   We write \smash{$ \alphabar \colon \fromto{\Loc{X}{t}}{\Loc{X}{s}} $} for the basechange of $ \alpha $ along $f$.
\end{notation}

\begin{definition}[(specialization morphism)]
	Let $ S $ be a scheme, let $ \alpha \colon \fromto{\Loc{S}{t}}{\Loc{S}{s}} $ be an étale specialization, and let $ f \colon \fromto{X}{S} $ be a morphism of schemes.
    The morphism $ \alphabar \colon \fromto{\Loc{X}{t}}{\Loc{X}{s}} $ induces a \defn{specialization map}
    \begin{equation*}
       \fromto{\Pietprofin(\Loc{X}{t})}{\Pietprofin(\Loc{X}{s})}
    \end{equation*}
    on the étale homotopy types of the Milnor balls of $ f $.
    If $ f $ is proper, the \defn{specialization map}
    \begin{equation*}
        \spmap_{\alpha} \colon \fromto{\Pietprofin(X_{t})}{\Pietprofin(X_{s})}
    \end{equation*}
    is the unique map making the square
    \begin{equation*}
        \begin{tikzcd}[sep=3em]
            \Pietprofin(X_t) \arrow[r, "\sim"{yshift=-0.25em}] \arrow[d, dotted, "\spmap_{\alpha}"'] & \Pietprofin(\Loc{X}{t}) \arrow[d, "\Pietprofin(\alphabar)"] \\ 
            \Pietprofin(X_s) \arrow[r, "\sim"{yshift=-0.25em}] & \Pietprofin(\Loc{X}{s}) \period
        \end{tikzcd}
    \end{equation*}
    commute.
    Here, the horizontal maps are induced by the inclusions $ \incto{X_t}{\Loc{X}{t}} $ and $ \incto{X_s}{\Loc{X}{s}} $; since $ f $ is proper, \Cref{cor:comparison_homotopy_type_of_fibre_and_Milnor_ball} shows that they are equivalences.
\end{definition}

In order to prove invariance under specialization, we need the following technical observation:

\begin{lemma}\label{lem:making_all_residue_fields_sep_closed}
    Let $ (R,\mfrak) $ be local ring such that the residue field $ R/\mfrak $ is separably closed.
    Let $ K $ be a field and let $ x \colon \Spec(K) \to \Spec(R) $ be a morphism with image $ x \in \Spec(R) $ such that the induced field extension $ \upkappa(x) \subset K $ on residue fields is a separable closure.
    Then there exists a factorization of $ x $ as a composite
    \begin{equation*}
        \begin{tikzcd}
            \Spec(K) \arrow[r, "j"] & \Spec(V) \arrow[r, "\phi"] & \Spec(R)
        \end{tikzcd}
    \end{equation*}
    with the following properties:
    \begin{enumerate}[label=\stlabel{lem:making_all_residue_fields_sep_closed}, ref=\arabic*]
    	\item\label{lem:making_all_residue_fields_sep_closed.1} The ring $ V $ is an everywhere strictly local, local integral domain with maximal ideal $ \pfrak $.

    	\item\label{lem:making_all_residue_fields_sep_closed.2} The map $ \phi $ is induced by a local homomorphism $ \fromto{R}{V} $ and the induced field extension $ \incto{R/\mfrak}{V/\pfrak} $ is an isomorphism.

    	\item\label{lem:making_all_residue_fields_sep_closed.3} The image of $ j $ is the generic point of $ \Spec(V) $ and the induced map $ \Frac(V) \to K $ is an isomorphism.
    \end{enumerate}
\end{lemma}

\begin{proof}
	Let $ \qfrak \subset R $ denote the prime ideal corresponding to the image of $ x $.
	By replacing $ R $ by $ R/\qfrak $, we may assume that $ R $ is a domain and $ x $ has image the generic point of $ \Spec(R) $.
	Thus the extension $ \Frac(R) \to K $ is a separable closure.
	Let $ R' $ be the integral closure of $ R $ in $ K $.
	Then $ R' $ is normal and the induced field extension $ \incto{\Frac(R')}{K} $ is an isomorphism.
	Now pick a prime ideal $ \afrak \subset R' $ lying above the maximal ideal of $ R $.
	Since $ R/\mfrak $ is separably closed, \cite[Proposition 2.6]{MR3649361} shows that the local ring $ V = R'_{\afrak} $ is everywhere strictly local.
	It follows that the factorization
	\begin{equation*}
	   R \to V \to K
	\end{equation*}
	satisfies the desired criteria.
\end{proof}

\begin{proposition}[(invariance under specialization)]\label{prop:invariance_under_specalization}
	Let $ f \colon X \to S $ be a smooth and proper morphism of schemes and let $ \Sigma $ be the set of primes invertible on $ S $. 
	Then for any étale specialization $ \alpha \colon \fromto{\Loc{S}{t}}{\Loc{S}{s}} $ the specialization maps
	\begin{equation*}
	   \Pietprofin(\alphabar) \colon \fromto{\Pietprofin(\Loc{X}{t})}{\Pietprofin(\Loc{X}{s})} \andeq \spmap_{\alpha} \colon \fromto{\Pietprofin(X_{t})}{\Pietprofin(X_{s})}
	\end{equation*}
	become equivalences after \Sigmacompletion.
\end{proposition}

\begin{proof}
    Write $ \alpha_t \colon \fromto{t}{\Loc{S}{s}} $ for the composite
    \begin{equation*}
        \begin{tikzcd}
            t \arrow[r, hooked] & \Loc{S}{t} \arrow[r, "\alpha"] & \Loc{S}{s} \period
        \end{tikzcd}
    \end{equation*}
    By the definition of the specialization maps, it suffices to show that the morphism
    \begin{equation*}
        \begin{tikzcd}
            X_{t} \arrow[r, hooked] & \Loc{X}{t} \arrow[r, "\alphabar"] & \Loc{X}{s}
        \end{tikzcd}
    \end{equation*}
    obtained by pulling back $ \alpha_t $ along $ f \colon \fromto{X}{S} $ induces an equivalence on \Sigmacomplete étale homotopy types.
	Use \Cref{lem:making_all_residue_fields_sep_closed} to factor $ \alpha_t $ to produce a commutative diagram
    \begin{equation*}
        \begin{tikzcd}[sep=3em]
            t \arrow[r, hooked, "j"] \arrow[dr, "\alpha_t"'] & \Spec(V) \arrow[d, "\phi"] & \Spec(V/\mfrak) \arrow[l, hooked', "i_1"'] \arrow[d, "\wr"{xshift=-0.25ex}]\\
            & \Loc{S}{s} & s \comma \arrow[l, "i_2"]
        \end{tikzcd}
    \end{equation*}
	where $ V $, $ j $, and $ \phi $ satisfy properties \enumref{lem:making_all_residue_fields_sep_closed}{1}--\enumref{lem:making_all_residue_fields_sep_closed}{3}, and $ i_1 $ and $ i_2 $ denote the inclusions of the closed points.
	We now apply the functor $ \Pietprofin(-\times_S X) \Sigmacomp $ to the above diagram.
	Since $ X $ is proper over $ S $, by \Cref{cor:comparison_homotopy_type_of_fibre_and_Milnor_ball} this functor inverts $ i_1 $ and $ i_2 $.
    Hence it also inverts $ \phi $.
	Furthermore, since $ \Spec(V) $ is irreducible and everywhere strictly local, $ t $ is the generic point, and $ X $ is smooth over $ S $, \Cref{prop:comparison_homotopy_fiber_and_Milnor_coball} shows that this functor inverts $ j $.
    We conclude that it also inverts $ \alpha_t $, as desired.
\end{proof}

\begin{remark}
    \Cref{prop:invariance_under_specalization} removes numerous hypotheses from \cite[Corollary 12.13]{MR0245577}.
\end{remark}



\section{Application: \arcdescent}\label{sec:arc-descent}

Introduced by Bhatt and Mathew in \cite{MR4278670}, the \textit{\arctopology} is a very fine Grothendieck topology (finer than the \vtopology) on the category of qcqs schemes.
In practice, many invariants that a priori only satisfy étale descent can be shown to satisfy \arcdescent (see \cite[\S5]{MR4278670}).
For example, étale cohomology with torsion coefficients satisfies \arcdescent.
In this section, we prove a nonabelian version of this result: we show that the profinite étale homotopy type
\begin{equation*}
	\Pietprofin \colon \Schqcqs \to \ProSpcfin
\end{equation*}
is a hypercomplete cosheaf for the \arctopology (see \Cref{thm:Pietprofin_satisfies_arc-descent}). 

We quickly recall the relevant definitions in \cref{subsec:reminders_on_the_v-topology_and_arc-topology}.
In \cref{subsec:arc-descent_for_the_etale_homotopy_type}, we prove that $ \Pietprofin $ is a hypercomplete \arccosheaf.
Besides the general machinery developed in \cite{MR4278670}, the key ingredient for our proof is the nonabelian proper basechange theorem.


\subsection{Reminders on the \vtopology and the \arctopology}\label{subsec:reminders_on_the_v-topology_and_arc-topology}

We begin by recalling the definitions of the v- and arc-topologies.

\begin{definition}[(cosheaves)]
	Let $ (\Scal,\tau) $ be \asite and $ \Ccal $ \acategory.
	We say that a functor $ F \colon \fromto{\Scal}{\Ccal} $ is a \defn{$ \tau $-cosheaf} if the functor $ F^{\op} \colon \fromto{\Scal^{\op}}{\Ccal^{\op}} $ is a $ \tau $-sheaf.
	Equivalently, $ F $ is a $ \tau $-cosheaf if $ F $ sends $ \tau $-covering sieves in $ \Scal $ to colimit diagrams in $ \Ccal $.
	We say that $ F $ is a \defn{hypercomplete $ \tau $-cosheaf} if $ F^{\op} $ is a hypercomplete $ \tau $-sheaf.
\end{definition}

\begin{notation}
	For a scheme $ S $, write $ \Schqcqs_S \subset \Sch_S $ for the full subcategory of $ S $-schemes spanned by those $ S $ schemes that are qcqs over $ \Spec(\ZZ) $.
\end{notation}

\begin{recollection}
	A morphism $ f \colon Y \to X $ of qcqs schemes is an \defn{\arccover} if for any valuation ring $ V $ of rank $ \leq 1 $ and any morphism $ \Spec(V) \to X $, there exists a faithfully flat map $ V \to W $ of rank $ \leq 1 $ valuation rings and a morphism $ \Spec(W) \to Y $ that fits into a commutative square
	\begin{equation*}
		\begin{tikzcd}
			\Spec(W) \arrow[r] \arrow[d] & Y \arrow[d] \\ 
			\Spec(V) \arrow[r] & X \period
		\end{tikzcd}
	\end{equation*}
	For a qcqs scheme $ S $, \arccovers generate a topology on the category $ \Schqcqs_{S} $ that we call the \defn{\arctopology}.
	
	Similarly, $ f \from Y \to X $ is called a \defn{\vcover}, if for every valuation ring $ V $ (not necessarily of rank $ \leq 1 $) and morphism $ \Spec(V) \to X $, there is a faithfully flat map of valuation rings $ V \to W $ and commutative square as above.
	The resulting topology on $ \Schqcqs_{S} $ is called the \defn{\vtopology}.
\end{recollection}

\begin{nul}
	Every \vcover is an \arccover \cite[Proposition 2.1]{MR4278670}.
	Also note that by the valuative criterion for properness, every proper surjection of qcqs schemes is a \vcover.
\end{nul}

\begin{nul}
	In general, the \arctopology is strictly finer than the \vtopology \cite[Corollary 2.9]{MR4278670}.
	If $ X $ is noetherian, then every \arccover $ f \colon \fromto{Y}{X} $ is also a \vcover \cite[Proposition 2.6]{MR4278670}.
	Moreover, in this case, $ f $ is also a cover for Voevodsky's \htopology \cites{MR2687724}{MR1403354}.
	See \cite[Theorem 2.8]{MR2679038}.
\end{nul}

Bhatt and Mathew gave a convenient criterion for verifying that a \vsheaf is an \arcsheaf in terms of excision (see \Cref{thm:arc-descent_critera}).
In the remainder of this subsection, we recall the relevant terminology to state this criterion.
In \cref{subsec:arc-descent_for_the_etale_homotopy_type}, we make use of this result to deduce that the profinite étale homotopy type satisfies \arcdescent.

First, this criterion requires the \vsheaf to be \textit{finitary}:

\begin{recollection}\label{rec:finitary_functors}
	Let $ \Ccal $ be \acategory with filtered colimits and let $ S $ be a scheme.
	A functor
	\begin{equation*}
		F \colon \Schqcqsop_S \to \Ccal
	\end{equation*}
	is \defn{finitary} if $ F $ carries limits of cofiltered diagrams of $ S $-schemes with affine transition maps to filtered colimits in $ \Ccal $.

	If $ \Ccal $ is \acategory with cofiltered limits, we say that a functor $ F \colon \Schqcqs_S \to \Ccal $ is \defn{finitary} if the corresponding functor \smash{$ F^{\op} \colon \Schqcqsop_S \to \Ccal^{\op} $} is finitary.
\end{recollection}

\begin{proposition}\label{prop:profinite_etale_homotopy_type_is_finitary}
	The protruncated and profinite étale homotopy types
	\begin{equation*}
		\Pietprotrun \colon \fromto{\Schqcqs}{\ProSpctrun} \andeq \Pietprofin \colon \fromto{\Schqcqs}{\ProSpcfin}
	\end{equation*}
	are finitary functors.
\end{proposition}

\begin{proof}
	First notice that profinite completion preserves cofiltered limits, so it suffices to prove the claim for the protruncated étale homotopy type.
	For this, note that by \cites[Exposé VII, Lemme 5.6]{MR50:7131}[Lemma 3.3]{MR4296353}, the functor
	\begin{equation*}
		(-)_{\et} \colon \Schqcqs \to \RTop_{\infty}
	\end{equation*}
	is finitary.
	By \cite[Corollary 4.3.7]{arXiv:1807.03281} and \SAG{Corollary}{A.8.3.3}, the protruncated shape preserves limits of cofiltered diagrams of bounded coherent \topoi and coherent geometric morphisms.
	The claim now follows from the facts that the étale \topos of a qcqs scheme is bounded coherent \cite[Proposition 3.7.3]{arXiv:1807.03281}, and that for every morphism between qcqs schemes $ f \colon \fromto{X}{Y} $, the induced geometric morphism $ \flowerstar \colon \fromto{X_{\et}}{Y_{\et}} $ is coherent \cite[Example 3.7.5]{arXiv:1807.03281}.
\end{proof}

\noindent A useful fact about finitary functors is that they are determined by their values on finitely presented schemes:

\begin{recollection}[{\stacks{09MV}}]\label{rec:every_qcqs_scheme_over_a_qs_scheme_is_a_cofiltered_limit_of_fp_schemes}
	Let $ S $ be a quasiseparated scheme. 
	Then every object of $ \Schqcqs_S $ can be written as the limit of a cofiltered diagram of finitely presented $ S $-schemes with affine transition maps. 
\end{recollection}

\begin{observation}[(equivalences of finitary functors)]\label{obs:equivalences_of_finitary_functors_can_be_checked_on_finitely_presented_S-schemes}
	Let $ \Ccal $ be \acategory with filtered colimits and let $ S $ be a quasiseparated scheme.
	In light of \Cref{rec:every_qcqs_scheme_over_a_qs_scheme_is_a_cofiltered_limit_of_fp_schemes}, given finitary functors
	\begin{equation*}
		F,G \colon \fromto{\Schqcqsop_S}{\Ccal} \comma
	\end{equation*}
	a natural transformation $ \alpha \colon \fromto{F}{G} $ is an equivalence if and only if $ \alpha $ is an equivalence when restricted to the full subcategory spanned by the finitely presented $ S $-schemes. 
\end{observation}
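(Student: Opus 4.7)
The only-if direction is immediate: restricting an equivalence along any inclusion of subcategories yields an equivalence. So the plan is to establish the if direction.

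For the if direction, let $\alpha \colon F \to G$ be a natural transformation such that $\alpha_{X_0}$ is an equivalence for every finitely presented $S$-scheme $X_0$, and let $X \in \Schqcqs_S$ be arbitrary. By \Cref{rec:every_qcqs_scheme_over_a_qs_scheme_is_a_cofiltered_limit_of_fp_schemes}, we may write $X \isomorphic \lim_{i \in I} X_i$ as a cofiltered limit in $\Schqcqs_S$ of finitely presented $S$-schemes $X_i$ with affine transition maps. Since both $F$ and $G$ are finitary (\Cref{rec:finitary_functors}), they convert this cofiltered limit of $S$-schemes into filtered colimits in $\Ccal$; that is, the natural maps
\begin{equation*}
	\colim_{i \in I} F(X_i) \to F(X) \andeq \colim_{i \in I} G(X_i) \to G(X)
\end{equation*}
are equivalences in $\Ccal$. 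The component $\alpha_X$ is then identified with the colimit (over $i \in I$) of the components $\alpha_{X_i}$, each of which is an equivalence by hypothesis. Since $\Ccal$ has filtered colimits and these preserve equivalences (being computed pointwise in any presentation, or simply by the universal property), we conclude that $\alpha_X$ is an equivalence, as required.

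There is no real obstacle here: the statement is essentially a formal bookkeeping exercise combining the definition of a finitary functor with \Cref{rec:every_qcqs_scheme_over_a_qs_scheme_is_a_cofiltered_limit_of_fp_schemes}. The only point to watch is keeping track of op's between the covariant functors $F, G \colon \Schqcqsop_S \to \Ccal$ and the cofiltered diagrams of schemes in $\Schqcqs_S$: the cofiltered limit of schemes becomes a filtered colimit in $\Schqcqsop_S$, and the finitary hypothesis guarantees this is sent to a filtered colimit in $\Ccal$. The case of a functor $F \colon \Schqcqs_S \to \Ccal $ valued in \acategory with cofiltered limits is then deduced by passing to opposites, so both halves of \Cref{rec:finitary_functors} are covered uniformly.
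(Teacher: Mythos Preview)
Your proof is correct and matches the paper's intended argument; the paper does not give an explicit proof for this observation, merely flagging it as an immediate consequence of \Cref{rec:every_qcqs_scheme_over_a_qs_scheme_is_a_cofiltered_limit_of_fp_schemes}, which is precisely the reduction you carry out.
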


Second, \arcsheaves automatically satisfy excision for \textit{Milnor squares}:

\begin{recollection}[(Milnor excision)]
	A commutative square of schemes
	\begin{equation*}
		\begin{tikzcd}
			Z \arrow[d] \arrow[r, hooked] & X \arrow[d, "f"] \\ 
			Z' \arrow[r, "i"', hooked] & X'
		\end{tikzcd}
	\end{equation*}
	is a \defn{Milnor square} if it is bicartesian, $ f $ is affine, and $ i $ is a closed immersion.
	Given a scheme $ S $, a functor
	\begin{equation*}
		F \colon \fromto{\Schqcqsop_{S}}{\Ccal}
	\end{equation*}
	satisfies \defn{Milnor excision} if $ F $ carries Milnor squares to pullback squares in $ \Ccal $.
\end{recollection}

\noindent We also recall the following weakening of Milnor excision:

\begin{recollection}[(\aicvexcision)]
	Let $ \Ccal $ be \acategory and $ S $ a scheme.
	A functor
	\begin{equation*}
		F \colon \Schqcqsop_S \to \Ccal
	\end{equation*}
	satisfies \defn{\aicvexcision} if for any absolutely integrally closed valuation ring $ V $ and $ \pfrak \in \Spec(V) $ the square
	\begin{equation*}
		\begin{tikzcd}
			F(\Spec(V)) \arrow[d] \arrow[r] & F(\Spec(V_{\pfrak})) \arrow[d] \\ 
			F(\Spec(V/\pfrak)) \arrow[r] & F(\Spec(\upkappa(\pfrak))
		\end{tikzcd}
	\end{equation*}
	is a pullback in $ \Ccal $.
\end{recollection}

\begin{nul}
	We say that a functor $ F \colon \Schqcqs_S \to \Ccal $ satisfies \defn{Milnor excision} (resp., \defn{\aicvexcision}) if $ F^{\op} $ satisfies Milnor excision (resp., \aicvexcision).
\end{nul}

Third, we need the target \category to be sufficiently nice.

\begin{definition}
	\Acategory $ \Ccal $ is \defn{compactly generated by cotruncated objects} if $ \Ccal $ is compactly generated and every compact object of $ \Ccal $ is cotruncated (i.e., truncated in $ \Ccal^{\op} $).
\end{definition}

\begin{example}
	The \category $ \ProSpcfin^{\op} \equivalent \Ind(\Spcfin^{\op}) $ is compactly generated by cotruncated objects.
\end{example}

\begin{example}
	If $ \Ccal $ is a compactly generated \category, then for each $ n \geq 0 $, the subcategory $ \Ccal_{\leq n} \subset \Ccal $ is compactly generated by cotruncated objects.
\end{example}

Finally, the promised characterization of \arcdescent in terms of \vdescent and excision: 

\begin{theorem}[{\cite[Theorem 4.1]{MR4278670}}]\label{thm:arc-descent_critera}
	Let $ \Ccal $ be an \category that is compactly generated by cotruncated objects, and let $ S $ be a qcqs scheme. 
	Let $ F \colon \Schqcqsop_S \to \Ccal $ be a finitary \vsheaf.
	Then the following are equivalent:
	\begin{enumerate}[label=\stlabel{thm:arc-descent_critera}]
		\item $ F $ is an \arcsheaf.

		\item $ F $ satisfies Milnor excision.

		\item $ F $ satisfies \aicvexcision.
	\end{enumerate}
\end{theorem}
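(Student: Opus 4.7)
The plan is to establish the cycle $(1) \Rightarrow (2) \Rightarrow (3) \Rightarrow (1)$.

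For $(1) \Rightarrow (2)$, I would verify that the map $X \sqcup Z' \to X'$ arising from a Milnor square is an \arccover, by a standard valuative argument: any map $\Spec(V) \to X'$ from a rank $\leq 1$ valuation ring either factors scheme-theoretically through the closed subscheme $Z'$ (when its generic point does) or, if the generic point lands in the open $X' \sminus Z'$ where $f$ is an isomorphism, lifts to $X$ after a faithfully flat extension of $V$. The Čech nerve of this cover collapses under the Milnor identities $X \crosslimits_{X'} X \simeq X$ and $X \crosslimits_{X'} Z' \simeq Z$ to a totalization computing the pullback $F(X) \crosslimits_{F(Z)} F(Z')$, which is exactly Milnor excision.

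For $(2) \Rightarrow (3)$, I would observe that every \aicvexcision square is itself a Milnor square: $\Spec(V_\pfrak) \to \Spec(V)$ is affine, $\Spec(V/\pfrak) \hookrightarrow \Spec(V)$ is a closed immersion, and the canonical map $V \to V_\pfrak \crosslimits_{\upkappa(\pfrak)} V/\pfrak$ is an isomorphism for any valuation ring $V$, a classical consequence of the total ordering on the value group.

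The main obstacle is $(3) \Rightarrow (1)$, which carries the substantive content of the theorem. My plan is to reduce \arcdescent for $F$ to a manageable class of covers using the finitary and compactly-generated-by-cotruncated hypotheses, and then invoke a structural classification of arc-covers. First, using \Cref{obs:equivalences_of_finitary_functors_can_be_checked_on_finitely_presented_S-schemes}, I would reduce to checking descent along arc-hypercovers between finitely presented affine schemes; the cotruncation hypothesis on the generators of $\Ccal$ then allows one to work one Postnikov stage at a time, where all descent totalizations become finite limits that commute with the relevant filtered colimits in $\Ccal$. The key geometric input (Bhatt--Mathew, building on Rydh) is that, after refinement by a \vcover of the base, every arc-cover of an affine scheme factors through one of the form $\Spec(V_\pfrak) \sqcup \Spec(V/\pfrak) \to \Spec(V)$ for $V$ an absolutely integrally closed valuation ring and $\pfrak$ a prime — precisely the covers controlled by \aicvexcision. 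Combining the given \vdescent with iterated application of \aicvexcision along the totally ordered spectrum of such a $V$ promotes \vdescent to \arcdescent. The delicate point is orchestrating this refinement so that the full class of arc-covers is generated from v-covers together with aic-v-excision squares on aic valuation rings; once that combinatorial reduction is set up, the excision hypothesis closes the argument.
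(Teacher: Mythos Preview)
The paper does not prove this theorem at all: it is quoted verbatim from Bhatt--Mathew \cite[Theorem 4.1]{MR4278670} and used as a black box. There is therefore no in-paper proof to compare your proposal against; your sketch is effectively a reconstruction of Bhatt--Mathew's own argument, and at the level of strategy (the cycle $(1)\Rightarrow(2)\Rightarrow(3)\Rightarrow(1)$, with the substantive content in $(3)\Rightarrow(1)$ via reduction to aic valuation rings) it is faithful to what they do.

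That said, your $(1)\Rightarrow(2)$ step contains a genuine error. The claimed ``Milnor identity'' $X \crosslimits_{X'} X \simeq X$ is false in general: the morphism $f\colon X \to X'$ in a Milnor square is affine but typically not a monomorphism. Over the open complement $X' \sminus Z'$ the map $f$ is an isomorphism, so the fiber product there is $X \sminus Z$; but over $Z'$ one gets $Z \crosslimits_{Z'} Z$, which need not agree with $Z$. Consequently the \v{C}ech nerve of $X \sqcup Z' \to X'$ does not collapse to the Milnor pullback as you assert. Bhatt--Mathew instead prove $(1)\Rightarrow(2)$ by a different route (essentially their \S4.3--4.4), using a d\'evissage that reduces the excision statement to the case of aic valuation rings rather than a direct \v{C}ech-nerve computation. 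If you want to salvage a direct argument, you would need to analyze the full \v{C}ech nerve and show that the extra terms coming from $Z \crosslimits_{Z'} Z$, $Z \crosslimits_{Z'} Z \crosslimits_{Z'} Z$, etc., do not contribute after applying $F$; this is not automatic.
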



\subsection{arc-descent for the étale homotopy type}\label{subsec:arc-descent_for_the_etale_homotopy_type}

In this subsection, we use \Cref{thm:arc-descent_critera} to prove:

\begin{theorem}\label{thm:Pietprofin_satisfies_arc-descent}
	The functor
	\begin{equation*}
		\Pietprofin (-) \colon \Schqcqs \to \ProSpcfin
	\end{equation*}
	is a finitary hypercomplete \arccosheaf.
	In other words, for any semi-simplicial \archypercovering $ p_\bullet \colon U_\bullet \to X $ the induced diagram $ \Pietprofin (U_\bullet) \to \Pietprofin(X) $ is a colimit diagram in $ \ProSpcfin $.
\end{theorem}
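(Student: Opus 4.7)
The plan is to apply the Bhatt--Mathew criterion (Theorem~\ref{thm:arc-descent_critera}) in the opposite category. That is, we view $\Pietprofin$ as a functor $\Schqcqsop \to \ProSpcfin^{\op}$ and show that it is a finitary \vsheaf satisfying Milnor excision; then Theorem~\ref{thm:arc-descent_critera} upgrades \vdescent to \arcdescent. The target is acceptable: $\ProSpcfin^{\op} \simeq \Ind(\Spcfin^{\op})$ is compactly generated by cotruncated objects because every \pifinite space is truncated. Finitariness is already recorded in Example~\ref{ex:profinite_etale_homotopy_type_is_finitary}. Finally, hypercompleteness comes for free: since every compact object of the target is cotruncated, $\ProSpcfin^{\op}$ is Postnikov-complete, and any \arcsheaf into it is automatically hypercomplete (equivalently, $\Pietprofin$ may be recovered from its truncations $\protrun\Pietprofin$, and it suffices to check descent levelwise).

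For \vdescent, the plan is to transport Bhatt--Mathew's proof of \vdescent for torsion étale cohomology to the nonabelian setting, using the monodromy equivalence $\X^{\lisse} \simeq \Fun(\Shapeprofin(\X),\Spcfin)$ (Theorem \ref{thm:equivalences_on_lisse_sheaves_and_profinite_shapes}) to convert statements about $\Pietprofin$ into statements about lisse sheaves of \pifinite spaces. Concretely, a \vcover of qcqs schemes may (after refinement) be written as a composite of an étale cover and a proper surjection. Étale descent for $\Pietprofin$ is immediate from the definition of the étale site. For proper surjections $p\colon Y \to X$, one uses the nonabelian proper basechange theorem (Corollary~\ref{cor:nonabelian_proper_basechange}) to show that for any torsion étale sheaf $F$ on $X$, the unit $F \to |p_{\bullet,\ast} p_{\bullet}^\ast F|$ for the Čech nerve $Y_\bullet \to X$ is an equivalence; passing to lisse \pifinite coefficients and dualizing via monodromy gives $\colim \Pietprofin(Y_\bullet) \simeq \Pietprofin(X)$.

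For Milnor excision, given a Milnor square as in \eqref{sq:Milnor_square}, the closed immersion $i$ and the affine map $f$ allow one to apply nonabelian proper basechange again: for any torsion étale sheaf of spaces $F$ on $X'$, the square of global sections
\begin{equation*}
    \begin{tikzcd}
        \Gammaet(X';F) \arrow[r] \arrow[d] & \Gammaet(X; \restrict{F}{X}) \arrow[d] \\
        \Gammaet(Z';\restrict{F}{Z'}) \arrow[r] & \Gammaet(Z;\restrict{F}{Z})
    \end{tikzcd}
\end{equation*}
is cartesian. Specializing to lisse \pifinite coefficients and invoking monodromy converts this into the assertion that the associated square of profinite étale homotopy types is cocartesian in $ \ProSpcfin $. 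Combining finitariness, \vdescent, and Milnor excision, Theorem~\ref{thm:arc-descent_critera} gives that $\Pietprofin$ is an \arccosheaf.

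The main obstacle will be step two, i.e., proving \vdescent. The reduction to the combination of étale descent and descent along proper surjections is standard, but executing descent along a proper surjection carefully in the nonabelian setting requires identifying the colimit of $\Pietprofin(Y_\bullet)$ with $\Pietprofin(X)$; this in turn rests on translating the proper-basechange statement about $p_\ast p^\ast F$ (for $F$ a lisse sheaf of \pifinite spaces) into a statement about shapes, and one must be careful to invoke the finitariness and Postnikov-completeness of $\ProSpcfin$ in order to reduce to truncated coefficients where nonabelian proper basechange applies. Once \vdescent is in hand, Milnor excision and the appeal to Theorem~\ref{thm:arc-descent_critera} are formal.
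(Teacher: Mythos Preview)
Your overall architecture matches the paper's: reduce to the Bhatt--Mathew criterion (\Cref{thm:arc-descent_critera}), check finitariness, \vdescent via nonabelian proper basechange, and an excision condition. The paper packages things a bit more directly than you do: rather than invoking monodromy, it simply observes that colimits in $\ProSpcfin \equivalent \Funlex(\Spcfin,\Spc)^{\op}$ are computed pointwise, so the cosheaf condition for $\Pietprofin$ is literally the hypersheaf condition for each functor $\Gammaet(-;\underline{K})$ with $K \in \Spcfin$. It then proves the stronger \Cref{prop:nonabelian_etale_arc_descent} (that $\Gammaet(-;F)$ is a finitary \archypersheaf for any constructible $F$) and deduces the theorem. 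This avoids the monodromy equivalence entirely and lands in $\Spc_{\leq n}$, where hypercompleteness is automatic.

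There is, however, a genuine gap in your excision step. You propose to verify \emph{Milnor} excision and claim it follows from nonabelian proper basechange applied to the square \eqref{sq:Milnor_square}. But neither map in a Milnor square is proper in the relevant direction: $f$ is affine, not proper, and applying proper basechange along the closed immersion $i$ does not produce the cartesian square of global sections you assert. Indeed, Milnor excision for torsion étale cohomology is one of the main \emph{outputs} of the Bhatt--Mathew machine, not an input; in their paper it is deduced from \arcdescent, not used to establish it. The paper here instead verifies the (a priori weaker but equivalent) condition of \emph{\aicvexcision}: for an absolutely integrally closed valuation ring $V$, the étale topos agrees with the Zariski topos, and $\Spec(V)$ has underlying space a finite totally ordered set, so the excision square becomes an elementary pushout of posets (\Cref{lem:etale_topoi_of_aic_valuation_rings} and \Cref{cor:constructible_sheaves_satisfy_aic-v-excision}). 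No basechange theorem is needed for this step. You should replace your Milnor excision argument with this aic-v-excision computation; once that is done, \Cref{thm:arc-descent_critera} applies and the rest of your outline goes through.
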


We first verify that the profinite étale homotopy type satisfies \aicvexcision.
This follows from the fact that the \category of constructible étale sheaves of spaces satisfies \aicvexcision.

\begin{recollection}
	If $ V $ is an absolutely integrally closed valuation ring, then every local ring of $ V $ is strictly henselian \cite[Lemma 5.3]{MR4278670}.
	That is, $ \Spec(V) $ is everywhere strictly local in the sense of \Cref{rec:everywhere_strictly_local_schemes}.
\end{recollection}

\begin{notation}
	Given a qcqs scheme $ X $, write $ X_{\et}^{\cons} \subset X_{\et} $ for the full subcategory spanned by the constructible étale sheaves of spaces in the sense of \Cref{def:Sigma-torsion_etale_sheaf}.
\end{notation}

\begin{recollection}[(sheaves on finite posets)]\label{rec:sheaves_on_finite_posets}
	Let $ P $ be a poset.
	Recall that the \defn{Alexandroff topology} on the set $ P $ is the topology where the open subsets are the subsets that are upwards-closed under the partial order.
	If $ P $ is finite, then there is a natural equivalence
	\begin{equation*}
		\Sh(P) \equivalent \Fun(P,\Spc) \period
	\end{equation*}
	See \cite[8.1.1]{exodromy}.
\end{recollection}


\begin{lemma}\label{lem:etale_topoi_of_aic_valuation_rings}
	Let $ V $ be an absolutely integrally closed valuation ring of finite rank $ n $.
	Then there is a natural equivalence of \topoi
	\begin{equation*}
		\Spec(V)_{\et} \equivalent \Fun(\{0 < \cdots < n\},\Spc) \period
	\end{equation*} 
	Moreover, this equivalence restricts to an equivalence
	\begin{equation*}
		\Spec(V)_{\et}^{\cons} \equivalent \Fun(\{0 < \cdots < n\},\Spcfin) \period
	\end{equation*}
\end{lemma}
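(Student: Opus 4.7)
The plan is to compute $\Spec(V)_{\et}$ in three steps: first reduce étale to Zariski, then identify the Zariski topos of this finite topological space with a functor category on a chain, and finally match constructibility with pointwise $\uppi$-finiteness.

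First, since $V$ is absolutely integrally closed, every local ring $V_{\pfrak}$ is strictly henselian, so by the cited equivalence the natural geometric morphism $\Spec(V)_{\et} \to \Spec(V)_{\zar}$ is an equivalence of \topoi. This reduces the problem to understanding the Zariski topos. Because $V$ has rank $n$, the prime spectrum consists of the chain of primes $\pfrak_0 \subsetneq \cdots \subsetneq \pfrak_n$, and the open subsets of $\Spec(V)$ form the totally ordered chain $\emptyset \subsetneq U_0 \subsetneq \cdots \subsetneq U_n = \Spec(V)$ with $U_i = \{\pfrak_0, \ldots, \pfrak_i\}$. Since there are only finitely many opens and they are totally ordered, every cover of an open $U_i$ necessarily contains $U_i$ itself; thus the Grothendieck topology on the site is essentially trivial, and a sheaf of spaces is the same as a presheaf that sends $\emptyset$ to a terminal object. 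After pruning the empty open and reindexing $U_{n-i} \leftrightarrow i$, such functors form exactly $\Fun(\{0 < \cdots < n\}, \Spc)$. This gives the desired equivalence, under which $F(i)$ corresponds to the stalk at $\pfrak_{n-i}$, i.e., to the sections over $U_{n-i}$.

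For the constructible part, I would exploit the natural stratification of $\Spec(V)$ by its individual points $\{\pfrak_i\}$. Each such singleton is qcqs and locally closed, and as a reduced subscheme equals $\Spec(\upkappa(\pfrak_i))$. Because $V$ is absolutely integrally closed, each residue field $\upkappa(\pfrak_i)$ is separably closed, so étale sheaves on each stratum are simply spaces and the lisse condition reduces to $\uppi$-finiteness. Since any stratification of the finite topological space $\Spec(V)$ by qcqs locally closed subschemes refines to the stratification by points, $F$ is constructible precisely when every stalk is $\uppi$-finite, i.e., when the corresponding functor $\{0 < \cdots < n\} \to \Spc$ factors through $\Spcfin$.

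The main obstacle in principle is the topos equivalence in Step 2, but this ends up being essentially bookkeeping thanks to the totally ordered structure of the opens: no genuine sheafification is needed, and the only nontrivial piece of the sheaf condition is the vanishing on $\emptyset$, which is handled by simply deleting $\emptyset$ from the indexing category.
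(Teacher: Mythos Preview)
Your proposal is correct and follows essentially the same approach as the paper: reduce from the étale to the Zariski topos using that all local rings of $V$ are strictly henselian, then identify sheaves on the finite chain space with functors on the poset $\{0 < \cdots < n\}$. The only difference is presentational: where the paper simply invokes that $\Spec(V)$ is the Alexandroff space of a linear order and cites \cite[8.1.1]{exodromy} for the identification of sheaves with functors (and implicitly for the constructible statement), you give the direct argument by hand---observing that every cover of $U_i$ must contain $U_i$, so sheaves are presheaves with terminal value on $\emptyset$, and that constructibility reduces to $\uppi$-finiteness of stalks via the stratification by points with separably closed residue fields.
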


\begin{proof}
	Since the natural geometric morphism $ \fromto{\Spec(V)_{\et}}{\Spec(V)_{\zar}} $ is an equivalence, it suffices to prove the claim for Zariski \topoi.
	The claim now follows from the fact that the Zariski topological space of $ \Spec(V) $ is isomorphic to the poset $ \{0 < \cdots < n\} $ equipped with the Alexandroff topology and \Cref{rec:sheaves_on_finite_posets}.
\end{proof}

\begin{corollary}\label{cor:constructible_sheaves_satisfy_aic-v-excision}
	The functor $ (-)_{\et}^{\cons} \colon \Schqcqsop_{S} \to \Catinfty $ satisfies \aicvexcision.
\end{corollary}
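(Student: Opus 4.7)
The plan is to reduce the statement to a combinatorial fact about pushouts of linearly ordered posets, using \Cref{lem:etale_topoi_of_aic_valuation_rings}. Fix an AIC valuation ring $V$ and a prime $\pfrak \in \Spec(V)$. Recall that $\Spec(V)$ is linearly ordered by specialization; the pro-open subscheme $\Spec(V_\pfrak) \subset \Spec(V)$ corresponds to the down-set of $\pfrak$, the closed subscheme $\Spec(V/\pfrak)$ corresponds to the up-set of $\pfrak$, and $\Spec(\upkappa(\pfrak)) = \{\pfrak\}$ is their intersection.

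In the main case where $V$ has finite rank $n$ and $\pfrak$ is the $k$-th prime in the chain, I would apply \Cref{lem:etale_topoi_of_aic_valuation_rings} to identify the four \categories appearing in the \aicvexcision square with $\Fun(\{0 < \cdots < n\}, \Spcfin)$, $\Fun(\{0 < \cdots < k\}, \Spcfin)$, $\Fun(\{k < \cdots < n\}, \Spcfin)$, and $\Fun(\{k\}, \Spcfin)$, respectively. The desired pullback square then follows from the combinatorial observation that
\[
\{0 < \cdots < n\} \;\cong\; \{0 < \cdots < k\} \coprod_{\{k\}} \{k < \cdots < n\}
\]
as posets, hence as objects of $\Catinfty$, combined with the formal fact that $\Fun(-, \Spcfin) \colon \Catinfty^{\op} \to \Catinfty$ converts colimits to limits.

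To handle AIC valuation rings of arbitrary rank, I would invoke the fact that $(-)_\et^\cons$ is finitary: any AIC valuation ring $V$ together with a choice of prime $\pfrak$ should be expressible as a cofiltered limit (along affine transition maps) of finite rank AIC valuation rings with compatible choices of primes, obtained by approximating the chain $\Spec(V)$ by its finite sub-chains containing $\pfrak$, the generic point, and the closed point, and realizing these combinatorially via absolute integral closures of suitable discrete valuation rings. The pullback square in $\Catinfty$ is preserved under the resulting filtered colimits of \categories of constructible sheaves. The main obstacle is precisely this finitary reduction; a cleaner alternative would be to extend \Cref{lem:etale_topoi_of_aic_valuation_rings} to arbitrary rank, using that for any AIC $V$ the étale topos agrees with the Zariski topos, which in turn admits an analogous description via functor categories on the linearly ordered Alexandroff space $\Spec(V)$, with constructibility corresponding to factorizations through finite sub-chains.
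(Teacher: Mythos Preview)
Your proposal is correct and follows essentially the same approach as the paper: identify the four categories via \Cref{lem:etale_topoi_of_aic_valuation_rings} with functor categories on finite linear orders, then observe that the relevant square of posets is a pushout in $\Catinfty$, so $\Fun(-,\Spcfin)$ takes it to a pullback. The only difference is in the reduction to finite rank: the paper simply invokes \cite[Lemma 2.22]{MR4278670}, which shows that for any finitary functor it suffices to verify \aicvexcision on finite rank absolutely integrally closed valuation rings, so you need not construct the approximation by hand.
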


\begin{proof}
	Since $ (-)_{\et}^{\cons} $ is a finitary functor, by \cite[Lemma 2.22]{MR4278670} it suffices to check \aicvexcision for finite rank absolutely integrally closed valuation rings. 
	So let $ V $ be an absolutely integrally closed valuation ring of finite rank $ n $, let $ \pfrak \in \Spec(V) $, and write $ i $ for the rank of the valuation ring $ V/\pfrak $.
	By \Cref{lem:etale_topoi_of_aic_valuation_rings}, the square
	\begin{equation*}
		\begin{tikzcd}
			\Spec(V)_{\et}^{\cons} \arrow[r] \arrow[d] & \Spec(V_{\pfrak})_{\et}^{\cons} \arrow[d] \\
			\Spec(V/\pfrak)_{\et}^{\cons} \arrow[r] & \Spec(\upkappa(\pfrak))_{\et}^{\cons}
		\end{tikzcd}
	\end{equation*}
	is given by applying $ \Fun(-,\Spcfin) $ to the pushout square 
	\begin{equation*}
		\begin{tikzcd}
			\{i\} \arrow[r, hooked] \arrow[d, hooked] & \{i < \cdots < n\} \arrow[d, hooked] \\
			\{0 < \cdots < i\} \arrow[r, hooked] & \{0 < \cdots < n \}
		\end{tikzcd}
	\end{equation*}
	in $ \Catinfty $.
	Therefore it is a pullback, as desired.
\end{proof}

\begin{proposition}\label{prop:nonabelian_etale_arc_descent}
	Let $ S $ be a qcqs scheme and let $ F \in S_{\et} $ be a constructible sheaf.
	Then the functor
	\begin{equation*}
		\Gammaet(-;F) \colon \Schqcqsop_{S} \to \Spc \, , \; \;	[f\colon X \to S] \mapsto \Gammaet(X;f^*F)
	\end{equation*}
	is a finitary \archypersheaf.
\end{proposition}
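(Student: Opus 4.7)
The plan is to verify the hypotheses of the arc-descent criterion \Cref{thm:arc-descent_critera}. Since $F$ is constructible, it is $n$-truncated for some $n \geq 0$, so $\Gammaet(-;F)$ takes values in $\Spc_{\leq n}$. This target is compactly generated by cotruncated objects (being an $(n+1)$-category in which every object is automatically cotruncated), and any sheaf with $n$-truncated values is automatically hypercomplete. It thus suffices to verify that $\Gammaet(-;F)$ is a finitary v-sheaf satisfying aic-v-excision.

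Finitariness is classical: for constructible coefficients, $\Gammaet$ commutes with cofiltered limits of qcqs schemes with affine transition maps (see \Cref{ex:profinite_etale_homotopy_type_is_finitary}). For aic-v-excision, fix an absolutely integrally closed valuation ring $V$ of finite rank $n$, a map $\Spec(V) \to S$, and a prime $\pfrak \in \Spec(V)$ corresponding to position $i$ under the identification $\Spec(V)_{\et}^{\cons} \simeq \Fun(\{0<\cdots<n\}, \Spcfin)$ of \Cref{lem:etale_topoi_of_aic_valuation_rings}. Writing $\phi$ for the functor associated to $f^*F$ and tracing through the identifications in the proof of \Cref{cor:constructible_sheaves_satisfy_aic-v-excision}, the global sections over the four corners of the aic-v-square read as $\phi(0)$, $\phi(i)$, $\phi(0)$, $\phi(i)$, with horizontal restriction maps equal to the transition morphism $\phi(0 \to i)$ and vertical restrictions being identities. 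The resulting square is tautologically cartesian.

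The main step is v-descent. Given a v-cover $p \colon U \to X$ of qcqs $S$-schemes with Čech nerve $U^{(\bullet)}$, write $\pi \colon \lim_{[m] \in \Deltaop} U^{(m)}_{\et} \to X_{\et}$ for the canonical geometric morphism from the descent topos. Then v-descent for $\Gammaet(-;f^*F)$ amounts to the natural map
\begin{equation*}
    \Gammaet(X;f^*F) \longrightarrow \Gamma\Bigl(\lim_{[m] \in \Deltaop} U^{(m)}_{\et};\, \pi^*(f^*F)\Bigr)
\end{equation*}
being an equivalence. Applying \Cref{prop:comparing_global_sections} to the étale coefficient subcategory of torsion sheaves on $X$ (whose closure under banding of $n$-gerbes is precisely \Cref{prop:pi_n_of_torsion_gerbe_torsion}) reduces this to: (i) v-descent for $1$-truncated torsion sheaves, a classical consequence of fpqc and proper descent for stacks in groupoids; and (ii) v-descent for étale cohomology with torsion abelian coefficients, a special case of \cite[Theorem 5.4]{MR4278670}.

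The main obstacle is verifying v-descent for $1$-truncated torsion sheaves, which requires extending classical fpqc stack descent to the full v-topology (for instance by using that v-covers are built from fpqc and proper surjections, and invoking proper descent for stacks in groupoids). Once this is in place, \Cref{thm:arc-descent_critera} yields that $\Gammaet(-;F)$ is a finitary arc-sheaf, and hypercompleteness follows automatically from the $n$-truncatedness of its values.
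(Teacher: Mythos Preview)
Your overall architecture matches the paper's: reduce hypercompleteness to the sheaf condition via $n$-truncatedness, then verify the hypotheses of \Cref{thm:arc-descent_critera} (finitary, v-descent, aic-v-excision). Your treatment of aic-v-excision is the paper's argument unpacked: the paper deduces it from \Cref{cor:constructible_sheaves_satisfy_aic-v-excision} by noting that mapping spaces in a pullback of \categories are the pullback of mapping spaces, and your explicit computation of $\phi(0)\to\phi(i)$ is exactly what that amounts to.

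The one substantive divergence is in the v-descent step. The paper does \emph{not} invoke \Cref{prop:comparing_global_sections} here; it simply observes that the proof of \cite[Proposition~5.2]{MR4278670} goes through verbatim for sheaves of spaces, because the only non-formal input in that argument is proper basechange, and \Cref{cor:nonabelian_proper_basechange} supplies the nonabelian version. Your route---reduce via gerbes to the $1$-truncated and abelian cases---is logically sound, but it does not actually lighten the load. The abelian case is indeed \cite[Theorem~5.4]{MR4278670}, but ``v-descent for $1$-truncated torsion sheaves'' is not a result one can cite off the shelf; establishing it requires precisely the Bhatt--Mathew reduction (approximation, then factoring h-covers into Zariski and proper surjections) combined with Giraud's $1$-truncated proper basechange. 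At that point you are running the same geometric argument the paper runs, just with a weaker basechange input, and you have spent the gerbe machinery for nothing. In effect, the paper already used \Cref{prop:comparing_global_sections} once to prove \Cref{cor:nonabelian_proper_basechange}; invoking it a second time here unrolls that work rather than saving any.
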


\begin{proof}
	Since there exists an integer $ n \geq 0 $ such that $ F $ is $ n $-truncated, $ \Gammaet(-;F) $ takes values in $ \Spc_{\leq n} $; hence it suffices to see that $ \Gammaet(-;F) $ is an \arcsheaf.
	The functor $ \Gammaet(-;F) $ is finitary, so by \Cref{thm:arc-descent_critera} it suffices to see that $ \Gammaet(-;F) $ satisfies \aicvexcision and \vdescent.
	Since 
	\begin{equation*}
		\Gammaet(X;\fupperstar F) = \Map_{X_{\et}^{\cons}}(\ast,\fupperstar F)
	\end{equation*}
	and mapping spaces in pullbacks of \categories are computed as pullbacks of the mapping spaces, \aicvexcision is an immediate consequence of \Cref{cor:constructible_sheaves_satisfy_aic-v-excision}.
	For \vdescent, we note that the proof of \cite[Proposition 5.2]{MR4278670} \textit{verbatim} applies in our situation.
	The only non-geometric input that is used there is the proper basechange theorem; in the present nonabelian situation, we appeal to \Cref{cor:nonabelian_proper_basechange}.
\end{proof}

\begin{proof}[Proof of \Cref{thm:Pietprofin_satisfies_arc-descent}]
	Since colimits in
	\begin{equation*}
		\ProSpcfin \equivalent \Funlex(\Spcfin,\Spc)^{\op}
	\end{equation*}
	are computed as pointwise limits in $ \Funlex(\Spcfin,\Spc) $, the claim is equivalent to showing that if $ F \in X_{\et} $ is a constant sheaf at a \pifinite space, then the natural map
	\begin{equation*}
		\Gammaet(X;F) \to \lim_{[n] \in \Deltainj} \Gammaet(U_n;p_n^* F)
	\end{equation*}
	is an equivalence.
	This follows from \Cref{prop:nonabelian_etale_arc_descent}.
\end{proof}



\begin{remark}[(\arcdescent for \categories of constructible sheaves)]
	Let $ \Lambda $ be a finite ring.
	Bhatt--Mathew showed that the functor $ \goesto{X}{\Dcons(X_{\et};\Lambda)} $ that carries a qcqs scheme to its constructible derived \category is an \archypersheaf \cite[Theorem 5.13]{MR4278670}.
	The key ingredients of the proof are: \aicvexcision, proper basechange, the preservation of constructibility under proper pushforwards, and Lurie's general result about basechange and descent \HA{Corollary}{4.7.5.3}.
	In the nonabelian setting, the only part of this argument that is currently unavailable is that pushforwards along proper morphisms preserve constructibility.
	Once this is proven, it will also follow that the functor $ \goesto{X}{X_{\et}^{\cons}} $ satisfies \archyperdescent.
\end{remark}



\section{Application: Künneth formulas}\label{sec:Kunneth_formulas}

Let $ k $ be a field of characteristic $ p \geq 0 $.
In this section, we prove Künneth formulas for the étale homotopy type (possibly completed away from $ p $).
For example, if $ k $ is separably closed we first show that for qcqs $ k $-schemes $ X $ and $ Y $ the natural map of prime-to-$ p $ étale homotopy types
\begin{equation*}
	\Piet(X \cross_k Y)\pprimecomp \longrightarrow \Piet(X)\pprimecomp \cross \Piet(Y)\pprimecomp
\end{equation*}
is an equivalence (\Cref{thm:prime-to-p_Kunneth_formula}).
At least when $ X $ and $ Y $ are finite type, this was already proven by Orgogozo in 2003 \cite[Corollaire 4.9]{MR1975807}.
Orgogozo's result seems to not be very widely-known (see \cite{MO:257722}); one goal of this section is to disseminate it.
From this, we derive some relative Künneth formulas over more general fields (\Cref{cor:relative_Kunneth_formula_proper,cor:relative_Kunneth_formula_prime-to-p}).
These also imply Künneth formulas for symmetric powers (see \Cref{rem:symmetric_Kunneth_formula_proper_case,rem:symmetric_Kunneth_formula_prime-to-p}).

In \cref{subsec:Kunneth_formulas_via_basechange}, we start by proving a general result relating nonabelian basechange theorems and Künneth formulas over separably closed fields.
\Cref{subsec:prime-to-p_Kunneth_formulas} uses this result to prove the Künneth formula for the prime-to-$ p $ étale homotopy type over separably closed fields and derives some consequences (e.g., $ \AA^1 $-invariance).
In \cref{subsec:relative_Kunneth_formulas}, we prove relative Künneth formulas over fields that are not separably closed.


\subsection{Künneth formulas via basechange}\label{subsec:Kunneth_formulas_via_basechange}

The purpose of this subsection is to prove the following proposition.
We are most interested in the case where $ W = X \cross_k Y $.

\begin{proposition}[(Künneth formula from basechange)]\label{prop:Kunneth_formula_from_basechange}
	Let $ \Sigma $ be a set of prime numbers, $ k $ a separably closed field, and 
	\begin{equation}\label{sq:span_over_a_separably_closed_field}
	    \begin{tikzcd}
	        W \arrow[r, "\fbar"] \arrow[d, "\gbar"'] & Y \arrow[d, "g"] \\ 
	        X \arrow[r, "f"'] & \Spec(k)
	    \end{tikzcd}
	\end{equation}
	a commutative square of qcqs schemes.
	Assume that for every \emph{constant} \Sigmatorsion étale sheaf $ F $ on $ Y $, the exchange transformation $ \fromto{\fupperstar\glowerstar(F)}{\gbarlowerstar\fbarupperstar(F)} $ is an equivalence.
	Then the natural map of \Sigmacomplete étale homotopy types
	\begin{equation*}
		\longfromto{\Piet(W)\Sigmacomp}{\Piet(X)\Sigmacomp \cross \Piet(Y)\Sigmacomp}
	\end{equation*}
	is an equivalence. 
\end{proposition}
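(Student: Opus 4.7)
The plan is to check the equivalence by mapping into each $\Sigma$-finite space $K$, reduce both sides to explicit colimit formulas via the cofiltered presentations of the two profinite shapes, and match them using the hypothesized exchange equivalence together with the triviality $\Spec(k)_{\et} \simeq \Spc$ (valid because $k$ is separably closed).

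Under the identification $\ProSpcSigma \simeq \Funlex(\SpcSigma,\Spc)^{\op}$ recalled in \cref{subsec:the_etale_homotopy_type}, a morphism in $\ProSpcSigma$ is an equivalence if and only if for every $K \in \SpcSigma$ the induced map of spaces $\Map_{\ProSpcSigma}(-,K)$ is an equivalence. On the left, the shape adjunction gives $\Map_{\ProSpcSigma}(\Piet(W)\Sigmacomp, K) \simeq \Gammaet(W; \Gammaupperstar_W K)$. For the right, I would write $\Piet(X)\Sigmacomp \simeq \lim_i X_i$ and $\Piet(Y)\Sigmacomp \simeq \lim_j Y_j$ as cofiltered limits of $\Sigma$-finite spaces; the product in $\ProSpcSigma$ is then $\lim_{(i,j)} X_i \cross Y_j$. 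Using exponential adjunction in $\Spc$ together with compactness of the $\pi$-finite $X_i$, the right-hand mapping space becomes
\begin{equation*}
  \colim_{(i,j)} \Map(X_i \cross Y_j,\, K) \simeq \colim_i \Map\bigl(X_i,\, \colim_j \Map(Y_j, K)\bigr).
\end{equation*}
A Postnikov-tower argument (using that the cohomology of a $\Sigma$-finite space with coefficients in a $\Sigma$-group is again a $\Sigma$-group) will show that each $\Map(Y_j,K) \in \SpcSigma$, so by the shape adjunction for $Y$ the inner colimit is $M_Y(K) \colonequals \Gammaet(Y; \Gammaupperstar_Y K)$.

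To match the left-hand side, I would use that separable closedness of $k$ makes $\Spec(k)_{\et} \simeq \Spc$, so $g_\ast \Gammaupperstar_Y K$ is simply the space $M_Y(K)$ and $f^\ast g_\ast \Gammaupperstar_Y K = \Gammaupperstar_X M_Y(K)$. Combining the assumed exchange equivalence $\gbar_\ast \fbar^\ast \Gammaupperstar_Y K \simeq f^\ast g_\ast \Gammaupperstar_Y K$ with $\Gammaet(W;-) \simeq \Gammaet(X; \gbar_\ast(-))$ yields
\begin{equation*}
  \Gammaet(W; \Gammaupperstar_W K) \simeq \Gammaet(X; \Gammaupperstar_X M_Y(K)).
\end{equation*}
Since $M_Y(K)$ is $n$-truncated for $n$ the truncation level of $K$ (being a filtered colimit of $n$-truncated $\Sigma$-finite spaces), and since $\ast$ is compact in $X_{\et,\leq n}$ by coherence of the étale topos of the qcqs scheme $X$, the functor $\Gammaet(X;\Gammaupperstar_X(-))$ commutes with the filtered colimit $M_Y(K) = \colim_j \Map(Y_j, K)$, producing $\Gammaet(X;\Gammaupperstar_X M_Y(K)) \simeq \colim_i \Map(X_i, M_Y(K))$, which matches the right-hand side. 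A routine diagram chase using functoriality of the shape and the universal property of products will then check that the constructed equivalence is the canonical Künneth map. The main technical obstacle I anticipate is the Postnikov-tower verification that $\Map(Y_j, K) \in \SpcSigma$; the colimit interchanges are standard once that is in place.
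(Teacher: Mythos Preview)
Your approach is correct and is essentially the paper's argument unpacked, with the abstract machinery replaced by explicit colimit computations. The paper introduces the \emph{composition product} $\Shape(\X) \circ \Shape(\Y)$ (composition of the corresponding left exact accessible endofunctors of $\Spc$) and factors the Künneth map as
\[
\begin{tikzcd}
\Piet(W) \arrow[r, "\epsilon"] & \Piet(X) \circ \Piet(Y) \arrow[r, "c"] & \Piet(X) \cross \Piet(Y) \period
\end{tikzcd}
\]
Here $\epsilon$ is precisely the exchange transformation, so your computation $\Gammaet(W;K) \simeq \Gammaet(X;\Gammaupperstar_X M_Y(K))$ is exactly the statement that $\epsilon$ is an equivalence after $\Sigma$-completion. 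The paper then cites two external facts: that $c$ becomes an equivalence after protruncation \cite[Corollary 2.8.5]{arXiv:1807.03281}, and that $\Sigma$-completion preserves binary products \cite[Corollary 2.10]{Haine:profinite_completions_of_products}. Your colimit manipulations (writing both sides as $\colim_{(i,j)}\Map(X_i \cross Y_j, K)$) amount to proving these two facts inline; in particular, the lemma you flag—that $\Map(Y_j, K) \in \SpcSigma$ when $Y_j, K \in \SpcSigma$—is what underlies both.

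One small comment: your phrase ``compactness of the $\pi$-finite $X_i$'' needs care, since $\pi$-finite spaces are not compact in $\Spc$. What you actually use (and what is true) is that they are compact in $\Spc_{\leq n}$, combined with the adjunction $\Map(X_i,Z)\simeq\Map(\trun_{\leq n}X_i,Z)$ for $Z\in\Spc_{\leq n}$. The payoff of the paper's packaging is that the factorization through $\circ$ makes it transparent that the constructed equivalence \emph{is} the canonical Künneth map, absorbing your final ``routine diagram chase''; the payoff of yours is that it is self-contained and avoids appealing to the two external references.
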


\noindent The proof of \Cref{prop:Kunneth_formula_from_basechange} is an axiomatization of the proof of Chough's Künneth formula in the proper setting \cite[Theorem 5.3]{MR4493612}; see \Cref{ex:Kunneth_formula_proper_case}.
To give the proof, we first recall the basics about the \textit{composition product} of prospaces and its relation to the product.
For this, we make crucial use of the identification of prospaces as left exact accessible functors $ \fromto{\Spc}{\Spc} $ (\Cref{rec:pro-objects_as_left_exact_accessible_functors}).

\begin{recollection}[{\SAG{Remark}{E.2.1.2}}]\label{rec:composition_product}
	Composition of functors defines a monoidal structure $ \goesto{(A,B)}{A \circ B} $ on the \category $ \Fun(\Spc,\Spc)^{\op} $.
	We call this monoidal structure the \defn{composition monoidal structure}.
	Since the composition of two left exact accessible functors $ \fromto{\Space}{\Space} $ is again left exact and accessible, the composition monoidal structure restricts to a monoidal structure on the full subcategory
	\begin{equation*}
		\ProSpc \subset \Fun(\Space,\Space)^{\op} \period
	\end{equation*}
\end{recollection}

\begin{observation}
	The identity functor is both the unit for $ \circ $ and the terminal object of $ \ProSpc $.
	Hence given prospaces $ A $ and $ B $, the universal property of the product provides a natural comparison map
	\begin{equation*}
		c \colon \fromto{A \circ B}{A \times B} \period
	\end{equation*}
	This map is not generally an equivalence.
	However, in the setting of étale homotopy theory, it is close to being an equivalence:
\end{observation}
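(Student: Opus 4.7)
The observation packages three assertions which we plan to verify in turn: $\mathrm{id}_{\Spc}$ is the unit for the composition monoidal structure on $\ProSpc$; it is also the terminal object of $\ProSpc$; and from these two facts, one obtains the comparison map $c$. All three verifications are formal, so the only real difficulty will be bookkeeping — tracking opposites and using the identification from \Cref{rec:pro-objects_as_left_exact_accessible_functors} correctly.

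For the unit assertion, we would observe that for every endofunctor $F \colon \Spc \to \Spc$ the strict identity $\mathrm{id}_{\Spc} \circ F = F = F \circ \mathrm{id}_{\Spc}$ holds, so $\mathrm{id}_{\Spc}$ is the monoidal unit of $(\Fun(\Spc,\Spc), \circ)$ and hence also of its opposite. Since $\mathrm{id}_{\Spc}$ is left exact and accessible it lies in $\ProSpc$, so by \Cref{rec:composition_product} it remains the unit after restricting the monoidal structure to $\ProSpc$.

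For the terminality assertion, we would identify $\mathrm{id}_{\Spc}$ with the image of the terminal space $\ast \in \Spc$ under the Yoneda embedding: under $\ProSpc \simeq \Funlexacc(\Spc,\Spc)^{\op}$, this embedding sends a space $K$ to $\mathrm{Map}_{\Spc}(K,-)$, and for $K = \ast$ this is precisely $\mathrm{id}_{\Spc}$. To check directly that it is terminal, we would compute mapping spaces: for any $A \in \ProSpc$ corresponding to $\tilde A \in \Funlexacc(\Spc,\Spc)$, Yoneda gives
\[
    \mathrm{Map}_{\ProSpc}(A, \mathrm{id}_{\Spc}) \simeq \mathrm{Map}_{\Fun(\Spc,\Spc)}(\mathrm{id}_{\Spc}, \tilde A) \simeq \tilde A(\ast),
\]
and left exactness of $\tilde A$ forces $\tilde A(\ast) \simeq \ast$. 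Hence $\mathrm{id}_{\Spc}$ is terminal in $\ProSpc$.

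Finally, the comparison map $c$ arises from a universal-property argument: the unique terminal maps $A \to \mathrm{id}_{\Spc}$ and $B \to \mathrm{id}_{\Spc}$, together with functoriality of $\circ$ and the unit axiom, furnish natural morphisms
\[
    A \circ B \longrightarrow A \circ \mathrm{id}_{\Spc} \simeq A \quad\text{and}\quad A \circ B \longrightarrow \mathrm{id}_{\Spc} \circ B \simeq B,
\]
and the universal property of the product in $\ProSpc$ then yields a unique natural morphism $c \colon A \circ B \to A \times B$ projecting to these.
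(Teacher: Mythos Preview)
Your proposal is correct; the paper states this as an observation without proof, leaving the (entirely formal) verification to the reader, and your argument supplies exactly the standard details one would expect: identifying $\id_{\Spc}$ with the corepresentable at $\ast$ to see terminality, and then using functoriality of $\circ$ together with the unit isomorphisms to build the two projections. There is nothing to compare here beyond noting that your writeup is more explicit than the paper's single sentence.
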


\begin{example}\label{ex:composition_product_of_etale_homotopy_types}
	Let $ \Sigma $ be a set of prime numbers and let $ X $ and $ Y $ be qcqs schemes.
	By a variant of the proof of \cite[Corollary 2.8.5]{arXiv:1807.03281}, the natural map of prospaces
	\begin{equation*}
		\longfromto{\Piet(X) \of \Piet(Y)}{\Piet(X) \cross \Piet(Y)}
	\end{equation*}
	becomes an equivalence after protruncation, hence also after \Sigmacompletion.
\end{example}

The next two observations relate the composition product to exchange transformations.

\begin{observation}
	Let 
	\begin{equation}\label{eq:span_of_topoi}
	    \begin{tikzcd}
	        \W \arrow[r, "\fbarlowerstar"] \arrow[d, "\gbarlowerstar"'] & \Y \arrow[d, "\Gamma_{\Y,\ast}"] \\ 
	        \X \arrow[r, "\Gamma_{\X,\ast}"'] & \Spc
	    \end{tikzcd}
	\end{equation}
	be a commuative square of \topoi and geometric morphisms.
	Note that the exchange transformation associated to the square \eqref{eq:span_of_topoi} defines a natural transformation
	\begin{equation*}
	    \begin{tikzcd}[sep=5em]
	        \Gamma_{\X,\ast}\Gammaupperstar_{\X} \Gamma_{\Y,\ast}\Gammaupperstar_{\Y} \arrow[r, "\Gamma_{\X,\ast} \Ex\Gammaupperstar_{\Y}"] & \Gamma_{\X,\ast} \gbarlowerstar \fbarupperstar \Gammaupperstar_{\Y} \equivalent \Gamma_{\W,\ast} \Gammaupperstar_{\W} 
	    \end{tikzcd}
	\end{equation*}
	of left exact accessible functors $ \fromto{\Spc}{\Spc} $.
	Let us write
	\begin{equation*}
		\epsilon \colon \longfromto{\Shape(\W)}{\Shape(\X) \of \Shape(\Y)}
	\end{equation*}
	for the corresponding map in $ \ProSpc $.
\end{observation}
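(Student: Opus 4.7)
The plan is to verify that the displayed composite genuinely is a natural transformation of left exact accessible functors with the claimed source and target, and then transport it across the identification $\ProSpc \equivalent \Funlexacc(\Spc,\Spc)^{\op}$ from \Cref{rec:pro-objects_as_left_exact_accessible_functors}. There is no substantive content beyond bookkeeping, so the proposal is really an unpacking of definitions.

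First I would recall from \Cref{rec:shape} that the shape of \atopos $\X$ is the composite $\Gamma_{\X,\ast}\Gammaupperstar_{\X}$, and from \Cref{rec:composition_product} that the composition product on $\ProSpc$ is induced by composition in $\Fun(\Spc,\Spc)^{\op}$. Hence
\begin{equation*}
\Shape(\X) \of \Shape(\Y) \equivalent \Gamma_{\X,\ast}\Gammaupperstar_{\X}\Gamma_{\Y,\ast}\Gammaupperstar_{\Y}
\end{equation*}
as an object of $\Funlexacc(\Spc,\Spc)^{\op}$, which identifies the source of the displayed transformation.

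Next I would identify the target. Because the square \eqref{eq:span_of_topoi} commutes, there are canonical equivalences $\Gamma_{\X,\ast}\gbarlowerstar \equivalent \Gamma_{\W,\ast}$ (the composite of the bottom and left edges) and, by passing to left adjoints, $\fbarupperstar\Gammaupperstar_{\Y} \equivalent \Gammaupperstar_{\W}$. Stringing these together gives the equivalence
\begin{equation*}
\Gamma_{\X,\ast}\gbarlowerstar\fbarupperstar\Gammaupperstar_{\Y} \equivalent \Gamma_{\W,\ast}\Gammaupperstar_{\W} \equivalent \Shape(\W)
\end{equation*}
stated in the observation. The exchange transformation $\Ex \colon \fupperstar\glowerstar \to \gbarlowerstar\fbarupperstar$ associated to \eqref{eq:span_of_topoi} is a natural transformation of functors $\Y \to \X$; whiskering on the left by $\Gamma_{\X,\ast}$ and on the right by $\Gammaupperstar_{\Y}$ produces the displayed morphism of left exact accessible endofunctors of $\Spc$, since each of the four involved functors preserves finite limits and filtered colimits (equivalently, is accessible).

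Finally, since $\ProSpc \equivalent \Funlexacc(\Spc,\Spc)^{\op}$, the displayed natural transformation in $\Funlexacc(\Spc,\Spc)$ corresponds to a morphism in $\ProSpc$ with \emph{reversed} direction, namely $\epsilon \colon \Shape(\W) \to \Shape(\X) \of \Shape(\Y)$. The only mild point to check is that the chosen equivalence $\Gamma_{\X,\ast}\gbarlowerstar\fbarupperstar\Gammaupperstar_{\Y} \equivalent \Shape(\W)$ is compatible with the commutativity data of the square, so that $\epsilon$ is well defined up to contractible choice; this follows from the standard fact that the mate of the identity $2$-cell filling \eqref{eq:span_of_topoi} is itself (cf.\ \Cref{prop:compatibility_of_exchange_transformations}). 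No further argument is required.
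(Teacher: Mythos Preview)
Your unpacking is correct, and it matches exactly what the paper is doing implicitly: the paper states this as an \emph{observation} with no proof, since it is purely definitional bookkeeping—identify the source via \Cref{rec:composition_product}, identify the target via commutativity of \eqref{eq:span_of_topoi} and its left adjoint, whisker the exchange transformation, and pass through the identification of \Cref{rec:pro-objects_as_left_exact_accessible_functors}. There is no separate argument in the paper to compare against; your write-up simply makes explicit what the observation leaves to the reader.
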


\begin{observation}\label{obs:map_to_product_factors_through_composition_product}
	The natural map $ \fromto{\Shape(\W)}{\Shape(\X) \cross \Shape(\Y)} $ factors as a composite
	\begin{equation*}
	    \begin{tikzcd}[sep=3em]
	        \Shape(\W) \arrow[r, "\epsilon"] & \Shape(\X) \of \Shape(\Y) \arrow[r, "c"] & \Shape(\X) \cross \Shape(\Y) \period
	    \end{tikzcd}
	\end{equation*}
\end{observation}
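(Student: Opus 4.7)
The plan is to verify the claimed factorization by invoking the universal property of the product, then to perform a diagram chase in $\Fun(\Spc,\Spc)$ using the explicit formula for $\epsilon$ and the triangle identities.

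First, I would observe that the map $c : \Shape(\X)\of\Shape(\Y) \to \Shape(\X)\cross\Shape(\Y)$ is determined, via the universal property of products, by its two components $c_1$ and $c_2$. The component $c_i$ is the canonical projection coming from the fact that the terminal object $\mathbf{1}\in\ProSpc$ is simultaneously the unit of the composition monoidal structure; for instance $c_1$ is the composite
\begin{equation*}
\Shape(\X) \of \Shape(\Y) \longrightarrow \Shape(\X) \of \mathbf{1} \simeq \Shape(\X)
\end{equation*}
induced by the unique map $\Shape(\Y)\to\mathbf{1}$, and symmetrically for $c_2$. Under the identification $\ProSpc\equivalent\Funlexacc(\Spc,\Spc)^{\op}$, this terminal map corresponds to the unit $\unit_{\Gamma_\Y}\colon\id{\Spc}\to\Gamma_{\Y,\ast}\Gammaupperstar_\Y$ of the adjunction defining $\Shape(\Y)$. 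Thus $c_1 \circ \epsilon$ translates in $\Fun(\Spc,\Spc)$ to the natural transformation
\begin{equation*}
\Gamma_{\X,\ast}\Gammaupperstar_\X \xrightarrow{\,\Gamma_{\X,\ast}\Gammaupperstar_\X\,\unit_{\Gamma_\Y}\,} \Gamma_{\X,\ast}\Gammaupperstar_\X\Gamma_{\Y,\ast}\Gammaupperstar_\Y \xrightarrow{\,\Gamma_{\X,\ast}\,\Ex\,\Gammaupperstar_\Y\,} \Gamma_{\W,\ast}\Gammaupperstar_\W \period
\end{equation*}

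Next, I would verify that this composite agrees with the natural transformation $\Gamma_{\X,\ast}\,\unit_{\gbar}\,\Gammaupperstar_\X$ defining $\Shape(\gbarlowerstar)$ according to \Cref{rec:shape}. The key step is to expand $\Ex$ according to its definition as the three-step composite involving $\unit_{\fbar}$, the commutativity of the square, and $\counit_{\Gamma_\X}$; after whiskering by the outer $\Gamma_{\X,\ast}\unit_{\Gamma_\Y}$, two successive triangle identities for the adjunctions $\Gammaupperstar_\X \dashv \Gamma_{\X,\ast}$ and $\Gammaupperstar_\Y \dashv \Gamma_{\Y,\ast}$ collapse the composite precisely to $\Gamma_{\X,\ast}\,\unit_{\gbar}\,\Gammaupperstar_\X$. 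The symmetric argument (applied to the adjunctions for $\fbar$ and $\Gamma_\X$) shows $c_2\circ\epsilon=\Shape(\fbarlowerstar)$.

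The main obstacle is purely notational rather than conceptual: it is the bookkeeping of units and counits for the four adjunctions in play, and in particular being careful about which of these map into \emph{opposite} categories under the dictionary $\ProSpc\subset\Fun(\Spc,\Spc)^{\op}$. Once the diagram is drawn, every triangle commutes by a triangle identity or by naturality of $\unit$ and $\counit$, and the argument is formal. Alternatively, one could package the computation more cleanly by invoking \Cref{prop:compatibility_of_exchange_transformations} applied to a suitable cube whose front face is the given square and whose back face is degenerate; this makes the required identity a direct instance of the compatibility of exchange transformations with composition.
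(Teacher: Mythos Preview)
The paper states this as an observation without proof, treating it as a routine unwinding of the definitions of $\epsilon$, $c$, and the functoriality of the shape; your proposal correctly supplies those details. Your strategy—checking the two components $c_i\circ\epsilon$ against $\Shape(\gbarlowerstar)$ and $\Shape(\fbarlowerstar)$ by expanding the exchange transformation and collapsing with triangle identities—is exactly the natural verification, and your diagram chase is right (the composite $\unit_{\Gamma_\Y}$ followed by $\unit_{\fbar}$ is the unit for $\Gammaupperstar_\W\dashv\Gamma_{\W,\ast}$ expressed via the factorization $\Gamma_{\W,\ast}=\Gamma_{\Y,\ast}\fbarlowerstar$, and rewriting it via $\Gamma_{\W,\ast}=\Gamma_{\X,\ast}\gbarlowerstar$ lets the remaining $\counit_{\Gamma_\X}$ cancel against the inserted $\unit_{\Gamma_\X}$, leaving $\Gamma_{\X,\ast}\unit_{\gbar}\Gammaupperstar_\X$). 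The alternative you mention, via \Cref{prop:compatibility_of_exchange_transformations} applied to a cube with a degenerate back face, is also a clean way to package the same computation.
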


\begin{proof}[Proof of \Cref{prop:Kunneth_formula_from_basechange}]
	Since $ k $ is separably closed, $ \Spec(k)_{\et} \equivalent \Spc $.
	By \Cref{obs:map_to_product_factors_through_composition_product}, the natural map of prospaces $ \fromto{\Piet(W)}{\Piet(X) \cross \Piet(Y)} $ factors as a composite
	\begin{equation*}
		\begin{tikzcd}
			\Piet(W) \arrow[r, "\epsilon"] & \Piet(X) \of \Piet(Y) \arrow[r, "c"] & \Piet(X) \cross \Piet(Y) \period
		\end{tikzcd}
	\end{equation*}
	Since $ \epsilon $ is induced by the exchange transformation associated to the square \eqref{sq:span_over_a_separably_closed_field}, the assumptions imply that the map $ \epsilon $ becomes an equivalence after \Sigmacompletion.
	By \Cref{ex:composition_product_of_etale_homotopy_types}, the map $ c $ also becomes an equivalence after \Sigmacompletion.
	Thus $ c \epsilon $ becomes an equivalence after \Sigmacompletion.
	To conclude, note that \cite[Example 2.17]{Haine:profinite_completions_of_products} shows that the natural map of \Sigmaprofinite spaces
	\begin{equation*}
		\longfromto{(\Piet(X) \cross \Piet(Y))\Sigmacomp}{\Piet(X)\Sigmacomp \cross \Piet(Y)\Sigmacomp}
	\end{equation*}
	is an equivalence.
\end{proof}

We conclude this subsection with two examples.
The first is due to Chough \cite[Theorem 5.3]{MR4493612}; we recapitulate it here for the sake of completeness.

\begin{example}[(Künneth formula, proper case)]\label{ex:Kunneth_formula_proper_case}
	Let $ k $ be a separably closed field and let $ X $ and $ Y $ be qcqs $ k $-schemes.
	If $ Y $ is proper, then the nonabelian proper basechange theorem (\Cref{cor:nonabelian_proper_basechange}) and \Cref{prop:Kunneth_formula_from_basechange} imply that the natural map of profinite spaces
	\begin{equation*}
		\longfromto{\Pietprofin(X \cross_k Y)}{\Pietprofin(X) \cross \Pietprofin(Y)}
	\end{equation*}
	is an equivalence.
\end{example}

\begin{notation}
	Let $ p $ be a prime number or $ 0 $.
	We write $ p' $ for the set of prime numbers \textit{different from} $ p $. 
\end{notation}

\begin{example}[(prime-to-$ p $ Künneth formula, smooth case)]\label{ex:prime-to-p_Kunneth_formula_smooth_case}
	Let $ k $ be a separably closed field of characteristic $ p \geq 0 $, and let $ X $ and $ Y $ be qcqs $ k $-schemes.
	If $ X $ is smooth, then the nonabelian smooth basechange theorem (\Cref{cor:nonabelian_smooth_basechange}) and \Cref{prop:Kunneth_formula_from_basechange} imply that the natural map of \pprimeprofinite spaces
	\begin{equation*}
		\longfromto{\Piet(X \cross_k Y)\pprimecomp}{\Piet(X)\pprimecomp \cross \Piet(Y)\pprimecomp}
	\end{equation*}
	is an equivalence.
\end{example}


\subsection{Prime-to-\texorpdfstring{$ p $}{p} Künneth formulas, after Orgogozo}\label{subsec:prime-to-p_Kunneth_formulas}

In this subsection, we prove the following Künneth formula, removing the smoothness hypothesis from \Cref{ex:prime-to-p_Kunneth_formula_smooth_case}.
We then derive some corollaries.

\begin{theorem}[(prime-to-$ p $ Künneth formula)]\label{thm:prime-to-p_Kunneth_formula}
	Let $ k $ be a separably closed field of characteristic $ p \geq 0 $, and let $ X $ and $ Y $ be qcqs $ k $-schemes.
	Then the natural map of \pprimeprofinite spaces
	\begin{equation*}
		\longfromto{\Piet(X \cross_k Y)\pprimecomp}{\Piet(X)\pprimecomp \cross \Piet(Y)\pprimecomp}
	\end{equation*}
	is an equivalence.
\end{theorem}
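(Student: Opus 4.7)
The plan is to bootstrap from the smooth case (\Cref{ex:prime-to-p_Kunneth_formula_smooth_case}) by combining \vdescent with de Jong's theorem on alterations. First, the finitariness of the profinite étale homotopy type (\Cref{ex:profinite_etale_homotopy_type_is_finitary}) combined with \Cref{rec:every_qcqs_scheme_over_a_qs_scheme_is_a_cofiltered_limit_of_fp_schemes} allows us to reduce to the case where $ X $ and $ Y $ are of finite type over $ k $.

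Next, I would iteratively apply de Jong's alteration theorem to produce a \vhypercover $ p_\bullet \colon X'_\bullet \to X $ in which each $ X'_n $ is a disjoint union of smooth finite type $ k $-schemes; this is possible because proper surjections are \vcovers. Pulling back along the projection $ X \cross_k Y \to X $ yields a \vhypercover $ X'_\bullet \cross_k Y \to X \cross_k Y $. By \Cref{thm:Pietprofin_satisfies_arc-descent}, $ \Pietprofin $ is a hypercomplete \vcosheaf; since $ (\blank)\pprimecomp $ is a left adjoint, post-composition shows that $ \Piet(\blank)\pprimecomp $ is also a hypercomplete \vcosheaf. Hence
\begin{equation*}
	\Piet(X)\pprimecomp \equivalent \colim_{[n] \in \Deltaop} \Piet(X'_n)\pprimecomp \andeq \Piet(X \cross_k Y)\pprimecomp \equivalent \colim_{[n] \in \Deltaop} \Piet(X'_n \cross_k Y)\pprimecomp \period
\end{equation*}
Since each $ X'_n $ is smooth over $ k $, \Cref{ex:prime-to-p_Kunneth_formula_smooth_case} identifies the right-hand colimit with $ \colim_{[n] \in \Deltaop} \paren{\Piet(X'_n)\pprimecomp \cross \Piet(Y)\pprimecomp} $.

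What remains --- and what I expect to be the hard part --- is to commute the product with $ \Piet(Y)\pprimecomp $ past this geometric realization in $ \Pro(\Spcpprime) $, something that need not hold for formal reasons in a procategory. I would address this by testing the comparison on $ p' $-finite coefficients: under the equivalence $ \Pro(\Spcpprime) \equivalent \Funlex(\Spcpprime,\Spc)^{\op} $ and the reinterpretation $ \Map_{\Pro(\Spcpprime)}(\Piet(W)\pprimecomp, K) \equivalent \Gammaet(W;K) $ for qcqs $ k $-schemes $ W $ and $ p' $-finite spaces $ K $, the required equivalence reduces to \vhyperdescent of the sections functor $ \Gammaet(\blank;K) $ (as in \Cref{prop:nonabelian_etale_arc_descent}) combined with the smooth Künneth equivalence evaluated termwise. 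An alternative route would be to adapt \cite[Corollary 2.10]{Haine:profinite_completions_of_products} from profinite completion to $ p' $-completion, and then combine the resulting compatibility of $ p' $-completion with products with the composition-product argument of \Cref{prop:Kunneth_formula_from_basechange}.
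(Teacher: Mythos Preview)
Your approach is essentially the paper's, but two points deserve attention.

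First, a small gap: you assert that de Jong produces a \vhypercover by \emph{smooth} $k$-schemes, but alterations only guarantee \emph{regular} total spaces, and over a separably closed field of positive characteristic that is not perfect, regular need not imply smooth. The paper handles this by first passing to an algebraic closure $k^{\alg}$ of $k$: the morphism $\Spec(k^{\alg}) \to \Spec(k)$ is a universal homeomorphism, so by topological invariance of the \'etale site the \'etale homotopy types are unchanged, and over an algebraically closed field regular schemes are smooth. You should insert this reduction before invoking alterations.

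Second, for what you flag as the hard part---commuting the product with $\Piet(Y)\pprimecomp$ past the geometric realization---the paper does not unwind to global sections or adapt the product-completion comparison. Instead it cites directly that geometric realizations of semi-simplicial objects are \emph{universal} in $\ProSpcpprime$ \cite[Corollary 1.13]{Haine:profinite_completions_of_products}, which is precisely the statement that $(-) \times \Piet(Y)\pprimecomp$ preserves such colimits. This is from the same reference you mention at the end, so you were close; it just enters one step earlier and more cleanly than via your proposed detours.
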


\noindent To prove this, we use the fact that every finite type $ k $-scheme admits a \vhypercover by regular $ k $-schemes:

\begin{observation}[(alterations \& \vhypercovers)]\label{rem:regular_v-hypercovers}
	Let $ k $ be a field and $ X $ a finite type $ k $-scheme.
	By the theory of alterations \cites[Theorem 1.1]{Illusie:On_Gabbers_refined_uniformization}[Exposé IX, Théorème 1.1]{MR3309086}[Theorem 4.4]{MR3730515}[Theorem 1.2.5]{MR3665001}, $ X $ admits a proper surjection, in partiuclar a \vcover, from a regular $ k $-scheme.
	By repeatedly applying this result for iterated pullbacks, it follows that there exists a semi-simplicial \vhypercover $ U_\bullet \to X $ where each $ U_n $ is a regular $ k $-scheme.
\end{observation}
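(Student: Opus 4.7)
The plan is to prove the observation in two stages, matching the two halves of the statement.

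First, I would establish that any finite-type $k$-scheme $X$ admits a proper surjection $U \to X$ with $U$ a regular finite-type $k$-scheme. This follows essentially by direct citation of the alteration theorems of de Jong, Gabber, Illusie, and Temkin. Given $X$, one may replace it by $X_{\mathrm{red}}$, decompose into irreducible components, and apply the alteration theorem to each component to produce a generically finite proper surjection from a scheme that is smooth over some finite extension of $k$ (hence regular and of finite type as a $k$-scheme); taking the disjoint union yields the required $U \to X$. As already noted in the excerpt, any proper surjection of qcqs schemes is a v-cover via the valuative criterion of properness (combined with Chevalley's theorem producing a dominating valuation ring in a suitable residue field extension).

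Second, I would construct the semi-simplicial v-hypercover $U_\bullet \to X$ by induction on simplicial degree. Recall that a morphism $U_\bullet \to X$ of semi-simplicial $X$-schemes is a v-hypercover precisely when for every $n \geq 0$ the relative matching map $U_n \to M_n(U_\bullet)$ is a v-cover, where $M_n(U_\bullet)$ is the finite limit encoding the boundary data of an $n$-simplex in $U_\bullet$ (with the convention $M_0(U_\bullet) = X$). Assume inductively that regular finite-type $k$-schemes $U_0, \ldots, U_{n-1}$, together with the corresponding matching v-covers, have been constructed. Then $M_n(U_\bullet)$ is a finite limit over $X$ of the already-chosen $U_i$ with $i < n$; since $X$ and each such $U_i$ is of finite type over $k$, this matching object is again a finite-type $k$-scheme. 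Applying the first stage to $M_n(U_\bullet)$ produces a proper surjection $U_n \to M_n(U_\bullet)$ from a regular finite-type $k$-scheme, supplying the $n$-th stage of the hypercover.

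The only point requiring verification is that each matching object $M_n(U_\bullet)$ remains a finite-type $k$-scheme so that the alteration theorem may be reapplied at every stage; this follows from the finite-limit description of $M_n$ combined with the closure of finite-type $k$-schemes under finite limits. The semi-simplicial (rather than simplicial) setting is precisely what makes the induction clean, since no degeneracy maps need to be incorporated.
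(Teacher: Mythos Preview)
Your proposal is correct and follows essentially the same approach as the paper, which only sketches the argument within the observation text itself (there is no separate proof environment). You have simply made explicit the inductive construction via matching objects and the finite-type preservation that the paper's phrase ``repeatedly applying this result for iterated pullbacks'' leaves implicit.
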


\begin{nul}\label{obs:basechange_of_product_is_product_of_basechange}
	Let $ K \supset k $ be an extension of fields, and let $ X $ and $ Y $ be $ k $-schemes.
	Then the natural morphism of $ K $-schemes $ \fromto{(X \cross_k Y)_{K}}{X_{K} \cross_{K} Y_{K}} $ is an isomorphism.
\end{nul}

\begin{proof}[Proof of \Cref{thm:prime-to-p_Kunneth_formula}]
	Let $ \kalg \supset k $ be an algebraic closure of $ k $.
	Since the morphism of schemes $ \fromto{\Spec(\kalg)}{\Spec(k)} $ is a universal homeomorphism and the étale homotopy type is topologically invariant, by basechanging to $ \kalg $ and applying \Cref{obs:basechange_of_product_is_product_of_basechange}, we may assume without loss of generality that $ k $ is algebraically closed.
	Furthermore, since the assignments 
	\begin{equation*}
		X \mapsto \Piet(X \cross_k Y)\pprimecomp \andeq X \mapsto \Piet(X)\pprimecomp \cross \Piet(Y)\pprimecomp
	\end{equation*}
	define finitary functors \smash{$ \fromto{\Schqcqs_k}{\ProSpcpprime} $}, it suffices to prove the claim in the case that $ X $ is of finite type over $ k $ (\Cref{obs:equivalences_of_finitary_functors_can_be_checked_on_finitely_presented_S-schemes}).

	In this case, since regular schemes over algebraically closed fields are smooth, \Cref{rem:regular_v-hypercovers} shows that $ X $ admits a semi-simplicial \vhypercover $ U_{\bullet} $ by smooth $ k $-schemes.
	Using the facts that the \pprimecomplete étale homotopy type is a hypercomplete \vcosheaf (\Cref{thm:Pietprofin_satisfies_arc-descent}) and geometric realizations of semi-simplicial objects are universal in $ \ProSpcpprime $ \cite[Corollary 1.18]{Haine:profinite_completions_of_products}, we compute
	\begin{align*}
		\Piet(X \cross_k Y)\pprimecomp &\equivalent \colim_{[n] \in \Deltainjop} \Piet(U_n \cross_k Y)\pprimecomp && (\text{\vhyperdescent}) \\ 
		&\equivalent \colim_{[n] \in \Deltainjop} \paren{\Piet(U_n)\pprimecomp \cross \Piet(Y)\pprimecomp} && (\text{\Cref{ex:prime-to-p_Kunneth_formula_smooth_case}}) \\ 
		&\equivalent \paren{\colim_{[n] \in \Deltainjop} \Piet(U_n)\pprimecomp} \cross \Piet(Y)\pprimecomp && (\text{geometric realizations are universal}) \\ 
		&\equivalent \Piet(X)\pprimecomp \cross \Piet(Y) \period && (\text{\vhyperdescent}) \qedhere
	\end{align*}
\end{proof}

We now deduce some consequences of \Cref{thm:prime-to-p_Kunneth_formula}.
The first is the analogous Künneth formula for prime-to-$ p $ étale fundamental groups.

\begin{notation}
	Let $ \Sigma $ be a set of primes.
	Given a progroup $ G $, write $ G^{\Sigma} $ for the maximal \proSigma quotient of $ G $.
\end{notation}

\begin{recollection}[(fundamental groups of completions)]\label{rec:fundamental_groups_of_completions}
	Let $ \Sigma $ be a set of primes.
	Let $ U $ be a prospace that can be written as a \textit{finite} coproduct of connected prospaces.
	By \cite[Corollary 3.7]{MR0245577}, for any basepoint $ u \in U $, the natural map
	\begin{equation*}
		\fromto{\uppi_1(U,u)^{\Sigma}}{\uppi_1(U\Sigmacomp,u)}
	\end{equation*}
	is an isomorphism.

	As a consequence, if $ U $ is a \textit{profinite} space, then for any basepoint $ u \in U $, the natural map $ \fromto{\uppi_1(U,u)^{\Sigma}}{\uppi_1(U\Sigmacomp,u)} $ is also an isomorphism
\end{recollection}

\begin{corollary}
	Let $ k $ be a separably closed field of characteristic $ p \geq 0 $, and let $ X $ and $ Y $ be qcqs $ k $-schemes.
	Let $ \fromto{z}{X \cross_k Y} $ be a geometric point with images $ \fromto{x}{X} $ and $ \fromto{y}{Y} $.
	Then the natural continuous homomorphism
	\begin{equation*}
		\longfromto{\piet_1(X \cross_k Y,z)^{p'}}{\piet_1(X,x)^{p'} \cross \piet_1(Y,y)^{p'}}
	\end{equation*}
	is an isomorphism.
\end{corollary}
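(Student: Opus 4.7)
The plan is to deduce this corollary from Theorem \ref{thm:prime-to-p_Kunneth_formula} by applying $\uppi_1$ at the specified basepoints and then identifying $\uppi_1$ of the prime-to-$p$ completion of a profinite étale homotopy type with the maximal $p'$-quotient of the étale fundamental group.

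First, I would invoke Theorem \ref{thm:prime-to-p_Kunneth_formula} to obtain an equivalence
$$\Piet(X \cross_k Y)\pprimecomp \equivalent \Piet(X)\pprimecomp \cross \Piet(Y)\pprimecomp$$
in $\ProSpcpprime$, compatible with the basepoints $z$, $x$, and $y$. Since $\uppi_1$ of a pointed product of prospaces decomposes as the product of the $\uppi_1$'s, passing to $\uppi_1$ at $z$ induces an isomorphism of $p'$-profinite groups
$$\uppi_1(\Piet(X \cross_k Y)\pprimecomp, z) \cong \uppi_1(\Piet(X)\pprimecomp, x) \cross \uppi_1(\Piet(Y)\pprimecomp, y).$$

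Next, I would identify each of the three $\uppi_1$'s appearing above with the corresponding $p'$-quotient of the étale fundamental group. For any qcqs scheme $U$ with geometric point $u$, the fact that $p'$-completion factors through profinite completion yields a natural equivalence $\Piet(U)\pprimecomp \equivalent \Pietprofin(U)\pprimecomp$. Applying the profinite case of Recollection \ref{rec:fundamental_groups_of_completions} with $\Sigma = p'$ to the profinite space $\Pietprofin(U)$ then gives
$$\uppi_1(\Piet(U)\pprimecomp, u) \cong \uppi_1(\Pietprofin(U), u)^{p'} \equalscolon \piet_1(U, u)^{p'}.$$
Substituting this into the isomorphism from the previous step, applied to $U = X$, $U = Y$, and $U = X \cross_k Y$, yields the desired isomorphism.

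The whole argument is a formal unwinding of Theorem \ref{thm:prime-to-p_Kunneth_formula}, so I do not anticipate any substantial obstacle beyond carefully tracking the basepoints through the various identifications; continuity of the resulting homomorphism is automatic, since it is induced by a morphism of pointed $p'$-profinite spaces.
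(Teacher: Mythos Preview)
Your proposal is correct and is exactly the argument the paper has in mind: the paper's proof reads in full ``Immediate from \Cref{thm:prime-to-p_Kunneth_formula,rec:fundamental_groups_of_completions},'' and you have simply unpacked how those two ingredients combine.
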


\begin{proof}
	Immediate from \Cref{thm:prime-to-p_Kunneth_formula,rec:fundamental_groups_of_completions}.
\end{proof}

\Cref{thm:prime-to-p_Kunneth_formula} also implies invariance results for the prime-to-$ p $ étale homotopy type.

\begin{example}[(invariance under extensions of separably closed fields)]\label{ex:invariance_of_the_etale_homotopy_type}
	Let $ k $ be a separably closed field of characteristic $ p \geq 0 $ and let $ X $ be a qcqs $ k $-scheme.
	Then for every extension of separably closed fields $ K \supset k $:
	\begin{enumerate}[label=\stlabel{ex:invariance_of_the_etale_homotopy_type}, ref=\arabic*]
		\item\label{ex:invariance_of_the_etale_homotopy_type.1} The natural map $ \fromto{\Piet(X_K)\pprimecomp}{\Piet(X)\pprimecomp} $ is an equivalence.

		\item\label{ex:invariance_of_the_etale_homotopy_type.2} The natural continuous homomorphism $ \fromto{\piet_1(X_K)^{p'}}{\piet_1(X)^{p'}} $ is an isomorphism.
	\end{enumerate}
\end{example}

\begin{remark}
	Item \enumref{ex:invariance_of_the_etale_homotopy_type}{2} recovers Landesman's recent invariance result \cite[Theorem 1.1]{arXiv:2005.09690}.
	See also \cite[Exposé XIII, Proposition 4.6]{MR50:7129}.
\end{remark}

\begin{corollary}[($ \AA^1 $-invariance)]\label{cor:A1-invariance}
	Let $ k $ be a separably closed field of characteristic $ p \geq 0 $.
	Then the functor
	\begin{equation*}
		\Piet(-)\pprimecomp \colon \fromto{\Schqcqs_k}{\ProSpcpprime}
	\end{equation*}
	is $ \AA^1 $-invariant.
\end{corollary}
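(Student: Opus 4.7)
The plan is to use the Künneth formula (\Cref{thm:prime-to-p_Kunneth_formula}) to reduce $\AA^1$-invariance to the contractibility of the \pprimecomplete étale homotopy type of the affine line. Concretely, for a qcqs $k$-scheme $X$, applying \Cref{thm:prime-to-p_Kunneth_formula} identifies the map induced by the projection $X \cross_k \AA^1_k \to X$ with the product projection
\begin{equation*}
    \Piet(X)\pprimecomp \cross \Piet(\AA^1_k)\pprimecomp \longrightarrow \Piet(X)\pprimecomp \period
\end{equation*}
Thus the corollary reduces to showing that $\Piet(\AA^1_k)\pprimecomp \equivalent \ast$ in $\ProSpcpprime$.

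For the latter, under the identification $\ProSpcpprime \equivalent \Funlex(\Spcpprime,\Spc)^{\op}$ of \Cref{rec:pro-objects_as_left_exact_accessible_functors}, contractibility is equivalent to the assertion that for every $p'$-finite space $K$, the natural map $K \to \Gammaet(\AA^1_k; \Gammaupperstar K)$ is an equivalence. I would verify this by applying \Cref{prop:comparing_global_sections} to the global-sections geometric morphism $(\AA^1_k)_{\et} \to \Spc$, with $\V' \colonequals \Spcpprime \subset \Spc$. The subcategory $\Spcpprime$ is closed under finite limits and truncations, and the banding of a $p'$-finite $n$-gerbe in $\Spc$ is again $p'$-finite because $\uppi_n$ of a $p'$-finite space is a $p'$-group. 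Hence it suffices to verify hypotheses \enumref{prop:comparing_global_sections}{1} and \enumref{prop:comparing_global_sections}{2} of that proposition.

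Hypothesis \enumref{prop:comparing_global_sections}{1} reduces, via a component-wise analysis (using that $\AA^1_k$ is connected), to the statement that $\Gammaet(\AA^1_k; \Kup(G,1)) \equivalent \Kup(G,1)$ for every finite $p'$-group $G$. This is the classical fact that the pro-$p'$ quotient of $\piet_1(\AA^1_k)$ is trivial for $k$ separably closed of characteristic $p$. Hypothesis \enumref{prop:comparing_global_sections}{2}, combined with \Cref{prop:comparing_global_sections.2_via_cohomology}, reduces to the classical vanishing $\Het^i(\AA^1_k; A) = 0$ for all $i \geq 1$ and every finite abelian group $A$ of order coprime to $p$.

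There is no serious obstacle: the nontrivial reduction from arbitrary truncated $p'$-finite coefficient systems to gerbes and Eilenberg--MacLane objects is already packaged in \Cref{prop:comparing_global_sections}, and the remaining inputs are the classical computations of the étale cohomology and fundamental group of the affine line over a separably closed field of characteristic $p$.
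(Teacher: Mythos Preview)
Your argument is correct and shares the initial Künneth reduction with the paper, but then takes a different route to the contractibility of $\Piet(\AA^1_k)\pprimecomp$. The paper splits on the characteristic: for $p > 0$ it invokes an external result that smooth connected affine curves over separably closed fields are étale $\Kup(\uppi,1)$'s, so that $\Pietprotrun(\AA^1_k) \equivalent \Kup(\piet_1(\AA^1_k),1)$, and then uses that $\piet_1(\AA^1_k)$ is pro-$p$; for $p = 0$ it appeals to invariance under extension of separably closed fields (\Cref{ex:invariance_of_the_etale_homotopy_type}) together with the Riemann existence theorem to compare with the contractible analytic space $\AA^1_{\CC}(\CC)$. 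You instead reuse the paper's own gerbe machinery (\Cref{prop:comparing_global_sections}) to reduce uniformly to the two classical inputs $\piet_1(\AA^1_k)^{p'} = 1$ and $\Het^{i}(\AA^1_k;A) = 0$ for $i \geq 1$ and finite abelian $A$ of order prime to $p$. Your route is more self-contained relative to the paper and avoids the characteristic split at the level of the homotopy type (the split is pushed into the classical inputs); the paper's route is shorter once one is willing to cite the $\Kup(\uppi,1)$ property as a black box.
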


\begin{proof}
	By the Künneth formula (\Cref{thm:prime-to-p_Kunneth_formula}), it suffices to show that \smash{$ \Piet(\AA_k^1)\pprimecomp \equivalent \pt $}.
	If $ p > 0 $, then since $ \AA_k^1 $ is smooth, connected, and affine, \cites[Proposition 15]{MR1383466}[Lemma 2.7(a)]{MR3549624} shows that
	\begin{equation*}
		\Pietprotrun(\AA_k^1) \equivalent \Kup(\piet_1(\AA_k^1),1) \period
	\end{equation*}
	The claim now follows from the fact that $ \piet_1(\AA_k^1) $ is a pro-$ p $ group.
	If $ p = 0 $, this follows from \Cref{ex:invariance_of_the_etale_homotopy_type}, the Riemann existence theorem, and the fact that the topological space \smash{$ \AA_{\CC}^1(\CC) $} is contractible.
\end{proof}


\subsection{Relative Künneth formulas}\label{subsec:relative_Kunneth_formulas}

Let $ k $ be a field and let $ X $ and $ Y $ be qcqs $ k $-schemes.
In this subsection, we prove Künneth formulas for the étale homotopy type of $ X \cross_k Y $, when $ k $ is \textit{not} separably closed.
The idea is to use the fundamental fiber sequence 
\begin{equation}\label{seq:fundamental_fiber}
	\begin{tikzcd}[sep=1.5em]
		\Pietprofin((X \cross_k Y)_{\kbar}) \arrow[r] & \Pietprofin(X \cross_k Y) \arrow[r] & \Pietprofin(\Spec(k)) 
	\end{tikzcd}
\end{equation}
of \cite{arXiv:2209.03476} to reduce to the separably closed case.

The next proposition is a general result that applies to a number of situations. 
Since the fiber sequence \eqref{seq:fundamental_fiber} need not remain a fiber sequence after completion away from $ \characteristic(k) $, some care is needed to formulate it.

\begin{recollection}\label{rec:Sigma-closed_field}
	Let $ \Sigma $ be a set of prime numbers, and write $ \Sigma' $ for the complement of $ \Sigma $ in the set of all primes.
	A field $ k $ is \defn{\Sigmaprimeclosed} in the sense of \cite[Definition 3.24]{arXiv:2209.03476} if for any separable closure $ \kbar \supset k $, the Galois group $ \Gal(\kbar/k) $ is a \proSigma group.
\end{recollection}

\begin{proposition}\label{prop:etale_homotopy_types_of_products_over_general_fields}
	Let $ k $ be a field with separable closure $ \kbar \supset k $, let $ X $ and $ Y $ be qcqs $ k $-schemes, and let $ \Sigma $ be a set of prime numbers.
	Assume the following conditions:
	\begin{enumerate}[label=\stlabel{prop:etale_homotopy_types_of_products_over_general_fields}, ref=\arabic*]
		\item\label{prop:etale_homotopy_types_of_products_over_general_fields.1} The field $ k $ is \Sigmaprimeclosed.

		\item\label{prop:etale_homotopy_types_of_products_over_general_fields.2} \emph{Künneth formula over $ \kbar $:} The natural map
		\begin{equation*}
			\longfromto{\Piet(\Xkbar \cross_{\kbar} \Ykbar)\Sigmacomp}{\Piet(\Xkbar)\Sigmacomp \cross \Piet(\Ykbar)\Sigmacomp}
		\end{equation*}
		is an equivalence.
	\end{enumerate}	
	Then the induced square 
	\begin{equation*}
		\begin{tikzcd}
			\Piet(X \cross_k Y)\Sigmacomp \arrow[r] \arrow[d] & \Piet(Y)\Sigmacomp \arrow[d] \\ 
			\Piet(X)\Sigmacomp \arrow[r] & \Piet(\Spec k)\Sigmacomp
		\end{tikzcd}
	\end{equation*}
	is a pullback square of profinite spaces.
\end{proposition}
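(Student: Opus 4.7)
The plan is to reduce the pullback claim to the Künneth formula over $\kbar$ via the fundamental fiber sequence for étale homotopy types. By \cite[Corollary 3.21]{arXiv:2209.03476}, for each qcqs $k$-scheme $Z$ there is a natural fiber sequence
\begin{equation*}
    \Pietprofin(Z_{\kbar}) \longrightarrow \Pietprofin(Z) \longrightarrow \BGk
\end{equation*}
of profinite spaces. Under the \Sigmaprimeclosed hypothesis on $k$, the Galois group $\Gk$ is pro-$\Sigma$, so $\BGk$ is already $\Sigma$-complete; consequently $\Sigma$-completion preserves these fiber sequences, yielding
\begin{equation*}
    \Piet(Z_{\kbar})\Sigmacomp \longrightarrow \Piet(Z)\Sigmacomp \longrightarrow \Piet(\Spec k)\Sigmacomp
\end{equation*}
for each $Z \in \{X,\, Y,\, X \cross_k Y\}$; in the last case I also use the identification $(X \cross_k Y)_{\kbar} \cong \Xkbar \cross_{\kbar} \Ykbar$ of \Cref{obs:basechange_of_product_is_product_of_basechange}.

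Next, the universal property of the pullback produces a natural comparison map
\begin{equation*}
    \Piet(X \cross_k Y)\Sigmacomp \longrightarrow \Piet(X)\Sigmacomp \crosslimits_{\Piet(\Spec k)\Sigmacomp} \Piet(Y)\Sigmacomp
\end{equation*}
lying over $\Piet(\Spec k)\Sigmacomp$, and the square in the statement is a pullback exactly when this comparison is an equivalence. Since the comparison is a morphism of fiber sequences over a common base, it suffices to check that it induces an equivalence on fibers over each basepoint of $\Piet(\Spec k)\Sigmacomp$. The fiber of the source is $\Piet(\Xkbar \cross_{\kbar} \Ykbar)\Sigmacomp$ by the fundamental fiber sequence for $X \cross_k Y$; the fiber of the target, being the fiber of a pullback, is naturally the product $\Piet(\Xkbar)\Sigmacomp \cross \Piet(\Ykbar)\Sigmacomp$. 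Naturality of the fundamental fiber sequence identifies the induced map on fibers with the Künneth comparison over $\kbar$, which is an equivalence by hypothesis~\enumref{prop:etale_homotopy_types_of_products_over_general_fields}{2}.

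The main obstacle is verifying that the fundamental fiber sequence passes cleanly to $\Sigma$-completion, since $\Sigma$-completion does not in general preserve fiber sequences. This is precisely where the \Sigmaprimeclosed hypothesis on $k$ is essential: it ensures the base $\BGk$ is already $\Sigma$-complete, so that standard results on $\Sigma$-complete profinite spaces guarantee the fiber sequence survives $\Sigma$-completion without degeneration.
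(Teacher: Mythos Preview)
Your proposal is correct and follows essentially the same approach as the paper: both reduce to checking that the comparison map is an equivalence on fibers over $\BGk$ via the fundamental fiber sequence, using the \Sigmaprimeclosed hypothesis to ensure $\BGk$ is already $\Sigma$-complete. The paper cites the $\Sigma$-completed fundamental fiber sequence directly from \cite[Corollary 3.28]{arXiv:2209.03476} rather than deriving it from Corollary~3.21, and unpacks the ``fiber of a pullback is the product of fibers'' step with an explicit cube diagram; your streamlined phrasing is fine, though the appeal to ``standard results on $\Sigma$-complete profinite spaces'' for why $\Sigma$-completion preserves the fiber sequence is the one soft spot---that preservation is not a general fact about completion functors, so you should cite the $\Sigma$-completed statement in that reference directly rather than sketch a derivation.
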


\begin{proof}
	Write $ \Gup \colonequals \Gal(\kbar/k) $.
	Since $ k $ is \Sigmaprimeclosed, $ \Gup $ is a \proSigma group; hence the profinite space $ \BG $ is \Sigmacomplete.
	The choice of separable closure provides an identification $ \Piet(\Spec k) \equivalent \BG $.
	Since the basepoint $ \fromto{\pt}{\BG} $ is an effective epimorphism, it suffices to show that the natural map 
	\begin{equation}\label{eq:comparison_map_etale_homotopy_types_of_fields}
		\longfromto{\Piet(X \cross_{k} Y)\Sigmacomp}{\Piet(X)\Sigmacomp \crosslimits_{\BG} \Piet(Y)\Sigmacomp}
	\end{equation}
	becomes an equivalence after pullback along $ \fromto{\pt}{\BG} $.
	
	First we compute the fiber of the left-hand side of \eqref{eq:comparison_map_etale_homotopy_types_of_fields} over $ \BG $.
	To do this, note that the fundamental fiber sequence \cite[Corollary 3.28]{arXiv:2209.03476} implies that the natural square
	\begin{equation*}
		\begin{tikzcd}
			\Piet((X \cross_{k} Y)_{\kbar})\Sigmacomp \arrow[d] \arrow[r] & \Piet(X \cross_{k} Y)\Sigmacomp \arrow[d] \\
			\pt \arrow[r] & \BG 
		\end{tikzcd}
	\end{equation*}
	is a pullback.
	Moreover, combining \Cref{obs:basechange_of_product_is_product_of_basechange} with assumption \enumref{prop:etale_homotopy_types_of_products_over_general_fields}{2} shows that 
	\begin{align*}
		\Piet((X \cross_{k} Y)_{\kbar})\Sigmacomp &\equivalent \Piet(\Xkbar \cross_{\kbar} \Ykbar)\Sigmacomp \\
		&\equivalent \Piet(\Xkbar)\Sigmacomp \cross \Piet(\Ykbar)\Sigmacomp
	\end{align*}

	To compute the fiber of the right-hand side of \eqref{eq:comparison_map_etale_homotopy_types_of_fields} over $ \BG $, consider the cube
	\begin{equation}\label{cube:etale_homotopy_types_geometric_fibers}
        \begin{tikzcd}[column sep={12ex,between origins}, row sep={8ex,between origins}]
            \Piet(\Xkbar)\Sigmacomp \cross \Piet(\Ykbar)\Sigmacomp \arrow[rr] \arrow[dd]  \arrow[dr] & & \Piet(\Ykbar)\Sigmacomp \arrow[dd]  \arrow[dr] \\
            & \displaystyle\Piet(X)\Sigmacomp \crosslimits_{\BG} \Piet(Y)\Sigmacomp \arrow[rr, crossing over] & & \Piet(Y)\Sigmacomp \arrow[dd]  \\
            \Piet(\Xkbar)\Sigmacomp \arrow[rr] \arrow[dr] & & \pt \arrow[dr] \\
            & \Piet(X)\Sigmacomp \arrow[rr] \arrow[from=uu, crossing over] & & \BG \period & 
        \end{tikzcd}
    \end{equation}
    Again by the fundamental fiber sequence, we see that the rightmost vertical and bottom horizontal faces are pullback squares.
    Since the front and back vertical faces of \eqref{cube:etale_homotopy_types_geometric_fibers} are by definition pullbacks, we see that all squares appearing in \eqref{cube:etale_homotopy_types_geometric_fibers} are pullback squares.
    In particular, 
    \begin{equation*}
    	\paren{\Piet(X)\Sigmacomp \crosslimits_{\BG} \Piet(Y)\Sigmacomp} \crosslimits_{\BG} \pt 
    	\equivalent \Piet(\Xkbar)\Sigmacomp \cross \Piet(\Ykbar)\Sigmacomp \period
    \end{equation*}
    Thus the natural map \eqref{eq:comparison_map_etale_homotopy_types_of_fields} induces an equivalence on fibers, as desired.
\end{proof}

\begin{corollary}[(relative Künneth formula, proper case)]\label{cor:relative_Kunneth_formula_proper}
	Let $ k $ be a field and let $ X $ and $ Y $ be qcqs $ k $-schemes.
	If $ Y $ is proper over $ k $, then the induced square
	\begin{equation*}
		\begin{tikzcd}
			\Pietprofin(X \cross_k Y) \arrow[r] \arrow[d] & \Pietprofin(Y) \arrow[d] \\ 
			\Pietprofin(X) \arrow[r] & \Pietprofin(\Spec k)
		\end{tikzcd}
	\end{equation*}
	is a pullback.
\end{corollary}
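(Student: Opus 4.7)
The plan is to deduce this corollary as a direct application of \Cref{prop:etale_homotopy_types_of_products_over_general_fields}, taking $ \Sigma $ to be the set of all prime numbers (so that $ (-)\Sigmacomp $ is ordinary profinite completion and matches the notation $ \Pietprofin $).

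With this choice of $ \Sigma $, hypothesis \enumref{prop:etale_homotopy_types_of_products_over_general_fields}{1} is automatic: the complement $ \Sigma' $ is empty, and every absolute Galois group $ \Gal(\kbar/k) $ is trivially pro-$ \Sigma $, so $ k $ is $ \Sigma' $-closed. Thus the entire content of the corollary is reduced to verifying hypothesis \enumref{prop:etale_homotopy_types_of_products_over_general_fields}{2}: that the natural map of profinite spaces
\begin{equation*}
    \Pietprofin(\Xkbar \cross_{\kbar} \Ykbar) \longrightarrow \Pietprofin(\Xkbar) \cross \Pietprofin(\Ykbar)
\end{equation*}
is an equivalence.

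To obtain this, first note that properness is preserved under basechange, so $ \Ykbar $ is proper over the separably closed field $ \kbar $. The absolute Künneth formula over a separably closed field in the proper case is exactly \Cref{ex:Kunneth_formula_proper_case} (Chough's result, which was already derived in this paper from nonabelian proper basechange together with \Cref{prop:Kunneth_formula_from_basechange}). Applying it to the qcqs $ \kbar $-schemes $ \Xkbar $ and $ \Ykbar $ verifies hypothesis \enumref{prop:etale_homotopy_types_of_products_over_general_fields}{2}, and \Cref{prop:etale_homotopy_types_of_products_over_general_fields} then delivers the desired pullback square.

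In truth, there is no main obstacle here, as all the substantive work has already been done in \Cref{prop:etale_homotopy_types_of_products_over_general_fields} (whose proof uses the fundamental fiber sequence of \cite{arXiv:2209.03476} to twist the absolute Künneth formula by the Galois action) and in \Cref{ex:Kunneth_formula_proper_case}. The only point worth a brief remark is that one should not worry about the $ \Sigma' $-closed hypothesis forcing a restriction on $ k $: since we are taking $ \Sigma $ to be all primes, there is no such restriction, which is precisely why the proper case works over arbitrary base fields while the smooth or prime-to-$ p $ case (\Cref{cor:relative_Kunneth_formula_prime-to-p}) requires $ \Gk $ to be prime-to-$ p $.
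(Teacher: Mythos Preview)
Your proof is correct and follows exactly the paper's own argument: apply \Cref{prop:etale_homotopy_types_of_products_over_general_fields} with $\Sigma$ the set of all primes, so that hypothesis \enumref{prop:etale_homotopy_types_of_products_over_general_fields}{1} is vacuous and hypothesis \enumref{prop:etale_homotopy_types_of_products_over_general_fields}{2} is supplied by \Cref{ex:Kunneth_formula_proper_case}.
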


\begin{proof}
	Apply \Cref{prop:etale_homotopy_types_of_products_over_general_fields} for $\Sigma$ the set of all primes; hypothesis \enumref{prop:etale_homotopy_types_of_products_over_general_fields}{1} is trivially satisfied and \Cref{ex:Kunneth_formula_proper_case} shows that hypothesis \enumref{prop:etale_homotopy_types_of_products_over_general_fields}{2} is satisfied.
\end{proof}

\begin{corollary}[(prime-to-$ p $ relative Künneth formula)]\label{cor:relative_Kunneth_formula_prime-to-p}
	Let $ k $ be a field of characteristic $ p \geq 0 $ and let $ X $ and $ Y $ be qcqs $ k $-schemes.
	If $ k $ is \pclosed, then the induced square
	\begin{equation*}
		\begin{tikzcd}
			\Piet(X \cross_k Y)\pprimecomp \arrow[r] \arrow[d] & \Piet(Y)\pprimecomp \arrow[d] \\ 
			\Piet(X)\pprimecomp \arrow[r] & \Piet(\Spec k)\pprimecomp
		\end{tikzcd}
	\end{equation*}
	is a pullback.
\end{corollary}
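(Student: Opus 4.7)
The plan is to deduce \Cref{cor:relative_Kunneth_formula_prime-to-p} as a direct application of \Cref{prop:etale_homotopy_types_of_products_over_general_fields} with $ \Sigma $ chosen to be the set $ p' $ of primes different from $ p $, so that $ \Sigma $-completion coincides with the $ p' $-completion $ \pprimecomp $ appearing in the statement. It then remains to verify the two hypotheses of that proposition in this setting.

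For hypothesis \enumref{prop:etale_homotopy_types_of_products_over_general_fields}{1}, with $ \Sigma = p' $ the complementary set $ \Sigma' $ equals $ \{p\} $. Unwinding \Cref{rec:Sigma-closed_field}, a field is $ \Sigma' $-closed in this case precisely when $ \Gal(\kbar/k) $ is a \pprimeprofinite group, which is exactly the standing assumption that $ k $ is \pclosed.

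For hypothesis \enumref{prop:etale_homotopy_types_of_products_over_general_fields}{2}, I need the absolute Künneth formula
\begin{equation*}
	\longfromto{\Piet(\Xkbar \cross_{\kbar} \Ykbar)\pprimecomp}{\Piet(\Xkbar)\pprimecomp \cross \Piet(\Ykbar)\pprimecomp}
\end{equation*}
over the separably closed field $ \kbar $. Since $ \Xkbar $ and $ \Ykbar $ are qcqs $ \kbar $-schemes and $ \kbar $ is a separably closed field of characteristic $ p $, this is precisely the content of \Cref{thm:prime-to-p_Kunneth_formula} applied to $ \kbar $ in place of $ k $.

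With both hypotheses verified, \Cref{prop:etale_homotopy_types_of_products_over_general_fields} immediately yields the desired pullback square. There is no genuinely new obstacle at this step: the difficulty has been absorbed into \Cref{thm:prime-to-p_Kunneth_formula} (which in turn relied on \vhyperdescent, the theory of alterations, and the smooth case \Cref{ex:prime-to-p_Kunneth_formula_smooth_case}) and into the fundamental fiber sequence that powers \Cref{prop:etale_homotopy_types_of_products_over_general_fields}. Once those inputs are in place, the present corollary is essentially a matter of matching the completion conventions.
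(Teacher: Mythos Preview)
Your proposal is correct and follows exactly the paper's own argument: apply \Cref{prop:etale_homotopy_types_of_products_over_general_fields} with $\Sigma = p'$, where hypothesis \enumref{prop:etale_homotopy_types_of_products_over_general_fields}{1} is the assumption that $k$ is \pclosed and hypothesis \enumref{prop:etale_homotopy_types_of_products_over_general_fields}{2} is supplied by \Cref{thm:prime-to-p_Kunneth_formula}. The paper's proof is a one-line version of what you wrote.
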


\begin{proof}
	Apply \Cref{prop:etale_homotopy_types_of_products_over_general_fields}; \Cref{thm:prime-to-p_Kunneth_formula} shows that hypothesis \enumref{prop:etale_homotopy_types_of_products_over_general_fields}{2} is satisfied.
\end{proof}

\begin{warning}
	If $ k $ is not \pclosed, \Cref{cor:relative_Kunneth_formula_prime-to-p} is false.
	See \cite[Warning 3.23]{arXiv:2209.03476}.
\end{warning}

We conclude with two remarks about how to use \Cref{cor:relative_Kunneth_formula_proper,cor:relative_Kunneth_formula_prime-to-p} to deduce Künneth formulas for symmetric powers.
These \textit{symmetric Künneth formulas} are analagous to Deligne's results about the étale cohomology of symmetric powers \cite[Exposé XVII, Théorème 5.5.21]{MR50:7132}; see the introduction of \cite{arXiv:1810.00351} for a summary of how these results differ from Deligne's.

\begin{remark}[(symmetric Künneth formula, proper case)]\label{rem:symmetric_Kunneth_formula_proper_case}
	Let $ k $ be a field and $ X $ a proper $ k $-scheme.
	Following ideas of Hoyois \cite[\S5]{arXiv:1810.00351}, Chough proved that if $ k $ is separably closed, there is a Künneth formula for symmetric powers
	\begin{equation*}
		\equivto{\Pietprofin(\Sym^n X)}{\Sym^n \Pietprofin(X)}
	\end{equation*}
	\cite[Theorem 6.12]{MR4493612}.
	The only part of Chough's proof that uses that the ground field is separably closed is the Künneth formula (which, at the time of Chough's paper, was only known over separably closed fields).
	As a consequence of \Cref{cor:relative_Kunneth_formula_proper}, Chough's proof shows that the symmetric Künneth formula for proper schemes holds over arbitrary base fields.
\end{remark}

\begin{remark}[(prime-to-$ p $ symmetric Künneth formula)]\label{rem:symmetric_Kunneth_formula_prime-to-p}
	Let $ k $ be a field of characteristic $ p \geq 0 $ and let $ X $ be a quasiprojective $ k $-scheme.
	If $ k $ is separably closed, Hoyois proved that for any prime $ \el \neq p $, the natural map 
	\begin{equation*}
		\fromto{\Piet(\Sym^n X)}{\Sym^n \Piet(X)}
	\end{equation*}
	becomes an equivalence after $ \ZZ/\el $-homological localization \cite[Theorem 5.6]{arXiv:1810.00351}.
	Similarly to \Cref{rem:symmetric_Kunneth_formula_proper_case}, the key `non-formal' input is that the natural map 
	\begin{equation*}
		\fromto{\Piet(X^{\cross n})}{\Piet(X)^{\cross n}}
	\end{equation*}
	becomes an equivalence after $ \ZZ/\el $-homological localization \cite[Proposition 5.1]{arXiv:1810.00351}.
	Since we now know the stronger Künneth formula \Cref{cor:relative_Kunneth_formula_prime-to-p}, Hoyois' proof shows that if $ k $ is a $ p $-closed field, then the natural map
	\begin{equation*}
		\fromto{(\Piet(\Sym^n X))\pprimecomp}{(\Sym^n \Piet(X))\pprimecomp}
	\end{equation*}
	of profinite spaces over $ \Pietprofin(\Spec(k)) $ is an equivalence.
\end{remark}


\DeclareFieldFormat{labelnumberwidth}{#1}
\printbibliography[keyword=alph, heading=references]
\DeclareFieldFormat{labelnumberwidth}{{#1\adddot\midsentence}}
\printbibliography[heading=none, notkeyword=alph]

\end{document}